\newcolumntype{C}[1]{>{\centering\arraybackslash}m{#1}}
\newcolumntype{L}[1]{>{\raggedright\arraybackslash}m{#1}}
\newtheorem{theorem}{Theorem}[section]
\newtheorem{lemma}[theorem]{Lemma}
\newtheorem{proposition}[theorem]{Proposition}
\newtheorem{corollary}[theorem]{Corollary}
\theoremstyle{definition}
\newtheorem{definition}[theorem]{Definition}
\theoremstyle{remark}
\newtheorem{remark}[theorem]{Remark}
\newtheorem{conventions}[theorem]{Conventions}
\numberwithin{equation}{section}
\long\def\symbolfootnote[#1]#2{\begingroup%
\def\thefootnote{\fnsymbol{footnote}}\footnote[#1]{#2}\endgroup} 
\newcommand{\Ann}[1]{{#1}^0}
\newcommand{\e}{\mathrm{e}}
\newcommand{\hook}{\lrcorner \,}
\newcommand{\id}{\mathrm{id}}
\newcommand{\vol}{\mathrm{vol}}
\newcommand{\diag}{\mathrm{diag}} 
\newcommand{\pr}{\mathrm{pr}}
\newcommand{\bR}{\mathbb{R}} 
\newcommand{\bF}{\mathbb{F}}
\newcommand{\rk}{\mathrm{rk}}
\renewcommand{\Im}{\mathrm{Im}}
\renewcommand{\dim}[1]{\mathrm{dim}(#1)}
\newcommand{\spa}[1]{\mathrm{span}(#1)}
\newcommand{\g}{\mathfrak{g}} 
\newcommand{\h}{\mathfrak{h}}
\newcommand{\uf}{\mathfrak{u}} 
\newcommand{\af}{\mathfrak{a}} 
\newcommand{\GL}{\mathrm{GL}} 
\newcommand{\SO}{\mathrm{SO}}
\newcommand{\SU}{\mathrm{SU}}
\newcommand{\G}{\mathrm{G}}
\newcommand{\Spin}{\mathrm{Spin}}
\newcommand{\so}{\mathfrak{so}} 
\newcommand{\op}{\oplus} 
\newcommand{\tr}{\mathrm{tr}}
\renewcommand{\L}{\Lambda}
\begin{document}
\title{Cocalibrated $\G_2$-structures on products of four- and three-dimensional Lie groups}

\author{
Marco Freibert \\
\small Fachbereich Mathematik\\
\small Universit\"at Hamburg \\
\small Bundesstra{\ss}e 55, 
D-20146 Hamburg, Germany \\
\small \texttt{freibert@math.uni-hamburg.de}
}
\maketitle

\begin{abstract}
Cocalibrated $\G_2$-structures are structures naturally induced on hypersurfaces in $\Spin(7)$-manifolds. Conversely, one may start with a seven-dimensional manifold $M$ endowed with a cocalibrated $\G_2$-structure and construct via the Hitchin flow a $\Spin(7)$-manifold which contains $M$ as a hypersurface. In this article, we consider left-invariant cocalibrated $\G_2$-structures on Lie groups $\G$ which are a direct product $\G=\G_4\times \G_3$ of a four-dimensional Lie group $\G_4$ and a three-dimensional Lie group $\G_3$. We achieve a full classification of the Lie groups $\G=\G_4\times \G_3$ which admit a left-invariant cocalibrated $\G_2$-structure.\\
{\it MSC(2000):} 53C10 (primary), 53C15, 53C30 (secondary)   \\
{\it Keywords:} Cocalibrated $\G_2$-structures, special geometry on Lie groups, direct products of Lie groups.
\end{abstract}

\maketitle

\section{Introduction}
\label{intro}
A $\G_2$-structure on a seven-dimensional manifold $M$ is a three-form $\varphi\in \Omega^3 M$ on $M$ with pointwise stabilizer conjugated to $\G_2\subseteq \SO(7)$.
Such a three-form $\varphi\in \Omega^3 M$ naturally induces a Riemannian metric, an orientation and so a Hodge star operator $\star_{\varphi}:\Omega^{*} M \rightarrow \Omega^{*} M$ on $M$. We call $\varphi$ \emph{cocalibrated} if
\begin{equation*}
d\star_{\varphi} \varphi=0.
\end{equation*}
Interest on cocalibrated $\G_2$-structures arises from different sources. First of all, they appear as one class of $\G_2$-structures in the Fern\'{a}ndez-Gray classification \cite{FG} of $\G_2$-structures by their intrinsic torsion. Secondly, they naturally appear in the context of Strominger's equations \cite{Str} in type II string theory, cf. e.g \cite{FI} and \cite{FIUV}. Moreover, any hypersurface in an eight-dimensional Riemannian manifold with holonomy contained in the exceptional holonomy group $\Spin(7)$ naturally carries a cocalibrated $\G_2$-structure \cite{MC}. Most importantly, also the converse relation between seven-dimensional manifolds with cocalibrated $\G_2$-structures and eight-dimensional Riemannian manifold with holonomy contained in $\Spin(7)$ holds. Given a seven-dimensional real-analytic manifold $M$ with real-analytic cocalibrated $\G_2$-structure, one may construct an eight-dimensional $\Spin(7)$-manifold containing $M$ as a hypersurface by solving a system of time-dependent partial differential equations, the so-called Hitchin's flow equations, cf. \cite{Hi}, \cite{CLSS}.

Hence, one is interested in constructing examples of real-analytic cocalibrated $\G_2$-struc\-ture and, as a first step, identifying the real-analytic manifolds which admit real-analytic cocalibrated $\G_2$-structures at all. The latter problem has been addressed e.g. in \cite{R}, where the compact homogeneous spaces admitting homogeneous cocalibrated $\G_2$-structures are determined. 
In \cite{F}, the author classified the seven-dimensional almost Abelian Lie groups possessing a left-invariant cocalibrated $\G_2$-structure.

In this paper we look again at left-invariant cocalibrated $\G_2$-structures on Lie groups $\G$, namely on those $\G$ which are a direct product of a three-dimensional Lie group $\G_3$ and a four-dimensional Lie group $\G_4$. We classify which of these Lie groups admit left-invariant cocalibrated
$\G_2$-structures.

Identifying as usual left-invariant $k$-forms on the Lie group with $k$-forms on the Lie algebra and introducing a differential on $\L^{*} \g^*$
by this identification, we may speak of \emph{cocalibrated $\G_2$-structures on a seven-dimensional Lie algebra} and these forms are in one-to-one correspondence to left-invariant cocalibrated $\G_2$-structures on each corresponding Lie group. Our main result can now be formulated as follows,
where we refer the reader for the names of the appearing Lie algebras to the Tables \ref{table:3d} and \ref{table:4d}.
\begin{theorem}\label{th:main}
Let $\g=\g_4\op \g_3$ be a seven-dimensional Lie algebra which is the Lie algebra direct sum of a
four-dimensional Lie algebra $\g_4$ and of a three-dimensional Lie algebra $\g_3$. Then $\g$ admits
a cocalibrated $\G_2$-structure if and only if one of the following four conditions is fulfilled:
\begin{enumerate}
\item
$\g_4$ is not unimodular, $\g_3$ is unimodular and $h^1(\g_4)+h^1(\uf)-h^2(\g_4)+h^2(\g_3)\leq 4$, where $\uf$ is the unimodular kernel of $\g_4$.
\item
$\g_4$ is unimodular, $\g_3$ is unimodular and at least one of the following conditions is true:
\begin{enumerate}
\item
$\g_3\in \{\so(3),\so(2,1)\}$
\item
$\g_4=\h\op \bR$ for a three-dimensional unimodular Lie algebra $\h$.
\item
$\g\in\{A_{4,1}\op e(2), A_{4,1}\op e(1,1), A_{4,8}\op e(1,1)\}$.
\end{enumerate}
\item
$\g_4$ is unimodular, $\g_3$ is not unimodular and at least one of the following conditions is true:
\begin{enumerate}
\item
$\g_4$ is almost Abelian, $\g_4\notin\{\bR^4,\h_3\op \bR\}$ and $\g_3=\mathfrak{r}_2\op \bR$.
\item
$[\g_4,\g_4]\in\{\h_3, \so(3),\so(2,1)\}$.
\end{enumerate}
\item
$\g_4$ is not unimodular, $\g_3$ is not unimodular and at least one of the following conditions is true:
\begin{enumerate}
\item
The unimodular kernel $\uf$ of $\g_4$ is isomorphic to $e(2)$ or $e(1,1)$.
\item
$\g=A_{4,9}^{-\frac{1}{2}}\op\mathfrak{r}_2\op \bR$. 
\item
The unimodular kernel $\uf$ of $\g_4$ is isomorphic to $\h_3$, $\g_3\neq \mathfrak{r}_2\op \bR$ and\\
$\g\notin\left\{A_{4,9}^1\oplus \mathfrak{r}_{3,\mu},A_{4,9}^{\alpha}\oplus \mathfrak{r}_{3,1}\left|\mu\in \left[-1/3,0\right),\,\alpha\in \left(-1,-1/3\right]\right.\right\}$.
\end{enumerate}
\end{enumerate}
\end{theorem}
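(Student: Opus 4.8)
The plan is to convert the existence of a cocalibrated $\G_2$-structure into a linear-algebra question about closed $4$-forms on $\g$ and then to exploit the bigrading of $\L^*\g^*$ induced by $\g=\g_4\op\g_3$. I would use the standard fact that a $\G_2$-structure is equivalently encoded by the $4$-form $\psi=\star_\varphi\varphi$, and that the admissible $4$-forms — those equal to $\star_\varphi\varphi$ for a $\G_2$-structure $\varphi$ — form one open $\GL^+(7,\bR)$-orbit $\mathcal O\subseteq\L^4\g^*$, cut out by the non-vanishing, with correct sign, of an explicit $\GL(7,\bR)$-equivariant invariant (Hitchin's stability for $4$-forms in dimension seven, cf.\ \cite{Hi}, \cite{CLSS}). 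As $d\psi=0$ is exactly the cocalibration condition, $\g$ admits a cocalibrated $\G_2$-structure iff the space $Z^4(\g)=\ker\big(d\colon\L^4\g^*\to\L^5\g^*\big)$ of closed $4$-forms meets $\mathcal O$. Since $\g_4$ and $\g_3$ are ideals, $\L^*\g^*=\L^*\g_4^*\ot\L^*\g_3^*$ and $d=d_4+d_3$ with $d_4$ of bidegree $(1,0)$ and $d_3$ of bidegree $(0,1)$; writing $\psi=\psi_{4,0}+\psi_{3,1}+\psi_{2,2}+\psi_{1,3}$ (the $(0,4)$-part vanishes and $\psi_{4,0}=f\,\vol_4$ for a fixed volume form $\vol_4$ of $\g_4$) and using $d_3\vol_4=0$, the condition $d\psi=0$ becomes
\[
d_4\psi_{3,1}=0,\qquad d_3\psi_{3,1}+d_4\psi_{2,2}=0,\qquad d_3\psi_{2,2}+d_4\psi_{1,3}=0 .
\]

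Next I would record that $\psi\in\mathcal O$ amounts to an open (non-degeneracy) condition on the components $\psi_{p,q}$ — it in particular couples $f$, $\psi_{3,1}$, $\psi_{1,3}$ and demands that $\psi_{2,2}$, viewed through $\L^2\g_4^*\cong\g_4$ and $\L^2\g_3^*\cong\g_3$, be sufficiently generic, while conversely, once such data are fixed, a dense set of completions lies in $\mathcal O$ — whereas solvability of the displayed system is governed by the Lie-algebra cohomology of the two factors. Concretely, the first equation confines $\psi_{3,1}$ to $Z^3(\g_4)\ot\g_3^*$; the second is solvable for $\psi_{2,2}$ precisely when $d_3\psi_{3,1}$ dies in $\big(\L^3\g_4^*/d_4(\L^2\g_4^*)\big)\ot\L^2\g_3^*$, a space tied to $H^3(\g_4)$ and hence, when $\g_4$ is unimodular, to $h^1(\g_4)$ by Poincar\'e duality; and the last equation similarly brings in $H^2(\g_3)$ and $H^2(\g_4)$. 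The unimodular/non-unimodular dichotomy enters because the top-degree cohomology of a three- or four-dimensional Lie algebra is non-zero exactly in the unimodular case, and the unimodular kernel $\uf\subseteq\g_4$ enters because for non-unimodular $\g_4$ one has $d_4(\L^3\g_4^*)=\L^4\g_4^*$ while the kernels controlling the system are pinned down by $\uf$. This is how $h^1(\g_4),h^1(\uf),h^2(\g_4),h^2(\g_3)$ in part~(a) arise as ranks of the connecting maps, the bound ``$\le4$'' being a dimension count against the three-dimensional $\L^2\g_3^*$ together with the one dimension carried by $f$.

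Finally I would split into the four cases of the Theorem according to the (non-)unimodularity of $\g_4$ and of $\g_3$ and, inside each, into the listed sub-cases. For sufficiency in the generic sub-cases one writes down, using the classification Tables~\ref{table:3d} and~\ref{table:4d}, an explicit closed $\psi$ — typically adapted to the splitting, e.g.\ built from $\vol_4$, an $\SU(2)$-type triple of $2$-forms on $\g_4$, and a coframe on $\g_3$ — and verifies the stability inequality once per family. The main obstacle, and the bulk of the work, is the necessity direction for the finitely many excluded algebras (such as $\bR^4$, $\h_3\op\bR$, and the listed members of $A_{4,9}^1\op\mathfrak r_{3,\mu}$ and $A_{4,9}^\alpha\op\mathfrak r_{3,1}$): one must show that \emph{no} closed $4$-form lies in $\mathcal O$. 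For this I would parametrise $Z^4(\g)$ completely — its dimension being controlled by the cohomology above — substitute the general closed $\psi$ into Hitchin's invariant, and show that it vanishes identically or has the wrong sign. That step is delicate precisely for the boundary members of the continuous families, where the soft cohomological bounds no longer suffice and one has to work with the explicit structure constants.
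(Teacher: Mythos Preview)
Your framework is sound---reduce to finding a closed $4$-form in the open $\G_2$-orbit, use the bigrading from $\g=\g_4\oplus\g_3$, and split into four cases by unimodularity---but the proposal is missing the two technical tools that actually make the proof work, and one of your proposed tools is likely intractable.

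\textbf{Obstructions.} You propose to parametrise $Z^4(\g)$ and evaluate Hitchin's invariant on it. The paper does not do this; it uses Westwick's combinatorial invariants (length, rank, and the derived invariants $r(\phi),m(\phi)$) of the $4$-form and of its restrictions to six-dimensional subspaces. For the Hodge dual $\Psi$ of a $\G_2$-structure these take fixed values, e.g.\ $l(\Psi)=5$, $r(\Psi|_W)=1$ and $r((v\hook\Psi)|_W)=2$ for any hyperplane $W$. These conditions, applied to well-chosen hyperplanes (often \emph{not} of the form $\g_4\oplus\text{(plane)}$), force certain bigraded components of a closed $\Psi$ to span subspaces of $\L^2 V_4^*$ consisting entirely of non-degenerate $2$-forms, and a dimension count on such subspaces is what yields the inequality in part~(a) and the exclusions elsewhere. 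Your explanation of the bound ``$\le 4$'' as a count against $\dim\L^2\g_3^*=3$ plus the scalar $f$ does not match this and is too coarse to recover the correct statement (note also that $\L^2\g_4^*$ is six-dimensional, so the identification $\L^2\g_4^*\cong\g_4$ you invoke does not exist). A direct attack via Hitchin's degree-$21$ invariant on a generic element of $Z^4(\g)$, especially across the one-parameter families in (d)(iii), would be brutal; nothing in your outline indicates how to tame it.

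\textbf{Constructions.} The paper's existence arguments rely on two ideas absent from your sketch. First, a deformation lemma: if $\Psi=\Omega_1+\Omega_2$ is the Hodge dual of some $\G_2$-structure with an adapted $4+3$ splitting, $\Omega_1$ is closed, and $d\Omega_2=d\Phi$ for some $\Phi\in\L^3 E_4^*\wedge E_3^*$, then $\lambda^4\Omega_1+\lambda^2(\Omega_2-\Phi)$ is closed and, for $|\lambda|$ large, still lies in the open orbit. This is what converts ``approximately closed'' data into genuinely cocalibrated structures and is used in essentially every existence proof. Second, the adapted splitting is frequently \emph{not} $\g_4\oplus\g_3$: in cases (b), (c), and (d) the paper repeatedly takes $V_4^*$ to be, say, a two-plane in $\g_4^*$ together with a two-plane in $\g_3^*$, or $\spa{e^4}\oplus\g_3^*$, etc. Your proposal to build $\psi$ from $\vol_4$ and an $\SU(2)$-triple on $\g_4$ keeps the splitting fixed at $\g_4\oplus\g_3$; for many of the algebras in (c) and (d) no cocalibrated structure adapted to that splitting exists, so your construction would fail there without the change of splitting.
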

For the proof of Theorem \ref{th:main} we use as in \cite{F} the algebraic invariants introduced by Westwick \cite{W}. In contrast to \cite{F},
these algebraic invariants only lead to obstructions. The construction of cocalibrated $\G_2$-structures relies on the following two properties of $\G_2$-structures. Firstly, from a decomposition $\g=V_4\oplus V_3$ of $\g$ into a four-dimensional subspace $V_4$ and a three-dimensional subspace $V_3$ and certain two-forms on $V_4$ and $V_3$ one can build the Hodge dual of a $\G_2$-structure. Note that in the concrete applications later these subspaces may not always coincide with $\g_4$ and $\g_3$. Secondly, we use the openness of the orbit of all Hodge duals. Therefore, we write down the Hodge dual $\Psi\in \L^4 \g^*$ of a $\G_2$-structure ``well-adapted'' to the structure of the Lie algebra $\g$, add some term $\Phi\in \L^4 \g^*$ such that $\Psi+\Phi$ is closed and rescale $\Psi$ and $\Phi$ such that the sum stays closed and $\Phi$ gets small in comparison to $\Psi$. Then $\Psi+\Phi$ is the Hodge dual of a cocalibrated $\G_2$-structure.

The work is organized as follows:
In Section \ref{sec:prelim}, we deal with preliminaries on $\G_2$-structures, four- and three-dimensional Lie algebras and the mentioned algebraic invariants. We begin in Subsection \ref{subsec:G2onvs} by recalling the definition and basic properties of a $\G_2$-structure on a seven-dimensional vector space. Moreover, we show that the orbit of all Hodge duals of such structures is ``uniformly'' open in a sense made precise in that subsection. In the following subsection, we expand our definition to $\G_2$-structures on manifolds and introduce cocalibrated $\G_2$-structures on Lie algebras. Subsections \ref{subsec:3dLA} and \ref{subsec:4dLA} are devoted to recalling basic facts about three-dimensional and four-dimensional Lie algebras. In Subsection \ref{subsec:alginv}, we recall the algebraic invariants for $k$-vectors introduced partly by Westwick \cite{W} and the values of these invariants for certain $k$-forms associated to $\G_2$-structures obtained in \cite{W} and \cite{F}. We investigate in Subsection  \ref{subsec:constructionG2} under which circumstances a subspace of the space of all two-forms on a four-dimensional vector space consists entirely of non-degenerate two-forms and how one can build from such two-forms the Hodge dual of a $\G_2$-structure on a seven-dimensional vector space.

In Section \ref{sec:class}, we give the classification. For that purpose we use in Subsection \ref{subsec:existence} the ``uniform'' openness of the orbit of all Hodge duals to show that, under certain assumptions, one may deform a given $\G_2$-structure on a seven-dimensional manifold in a particular way to obtain a one-parameter family of cocalibrated $\G_2$-structures on $M$. We apply this result to our situation, namely $\G_2$-structures on Lie algebras which are direct sums of a four-dimensional and a three-dimensional Lie algebra, to get existence results for certain classes of such Lie algebras. In Subsection \ref{subsec:obstructions} we use the algebraic invariants to obtain obstructions to the existence of cocalibrated $\G_2$-structures on the Lie algebras in question and exclude such structures for large classes. In the Subsections \ref{subsec:firstcase} -  \ref{subsec:fourthcase}, we apply the results of the Subsections \ref{subsec:existence} and \ref{subsec:obstructions} to the direct sums $\g=\g_4\op \g_3$ and deal separately with the four cases which naturally appear by distinguishing whether $\g_4$ or $\g_3$ is unimodular or not.
\section{Preliminaries}\label{sec:prelim}
Throughout this article, we use the following conventions
\begin{conventions}\label{conventions}
All considered vector spaces, Lie algebras, etc., are real and finite-dimen\-sional. If $V$ is a vector space and $A$ is a subset of $V$, we denote by $\Ann{A}:=\{\alpha\in V^*|\alpha(a)=0\, \forall a\in A\}$ the annihilator of $A$ in $V$. If $V=W\op U$ as $\bF$-vector spaces and $\pi_W:V\rightarrow W$ is the projection onto $W$ along $U$, then $\pi_W^*:\L^* W^*\rightarrow \L^* V^*$ is injective. The image of $\pi_W^*$ is $\L^* \Ann{U}$. We use this to identify $\L^* \Ann{U}$ with $\L^* W^*$. If $\g=\uf\op U$ is a real finite-dimensional Lie algebra which is the vector space direct sum of an ideal $\uf$ in $\g$ and a vector subspace $U\subseteq \g$, then the above injection also identifies the cochain complexes $(\L^* \Ann{U}, \pi_{\L^*\Ann{U}} \circ d_{\g}|_{\L^* \Ann{U}})$ and $(\L^* \uf^*,d_{\uf})$, where $\pi_{\L^*\Ann{U}}: \L^*\g^*\rightarrow \L^* \Ann{U}$ is the projection onto $\L^* \Ann{U}$ along $ \Ann{\uf}\wedge \L^* \g^*$. Using this identification, we write $d_{\uf}$ instead of $\pi_{\L^*\Ann{U}} \circ d_{\g}|_{\L^* \Ann{U}}$. Note that if $U$ is also an ideal in $\g$ and $\g=\uf\op U$ is a Lie algebra direct sum, then $\pi_{\L^*\Ann{U}} \circ d_{\g}|_{\L^* \Ann{U}}=d_{\g}|_{\L^* \uf^*}=d_{\uf}$ in our identification. In this case we omit the index and simply write $d$.
\end{conventions}
\subsection{$\G_2$-structures on vector spaces} \label{subsec:G2onvs}
We give a short introduction into $\G_2$-structures on vector spaces. More thorough introductions may be found in \cite{Br1} and in \cite{F}.
\begin{definition}
Let $V$ be a seven-dimensional vector space.
A \emph{$\G_2$-structure} on $V$ is a three-form $\varphi\in \L^3 V^*$
for which there exists a basis $e_1,\ldots,e_7$ of $V$ with
\begin{equation}\label{eq:G2structure}
\varphi=e^{127}+e^{347}+e^{567}+e^{135}-e^{146}-e^{236}-e^{245}
\end{equation}
Thereby, $e^1,\ldots,e^7\in V^*$ denotes the dual basis of $e_1,\ldots,e_7$.
We call the seven-tuple $(e_1,\ldots e_7)\in V^7$ an \emph{adapted basis}
for the $\G_2$-structure $\varphi$.
\end{definition}
\begin{remark}
All $\G_2$-structures lie in one orbit under the natural action
of $\GL(V)$ on $\L^3 V^*$. The isotropy group of a $\G_2$-structure in $\GL(V)$ under this action
is isomorphic to $\G_2$, which is in our context the simply-connected compact real form of the
complex simple Lie group $(\G_2)_{\mathbb{C}}$. Since $\dim{\GL(V)}=49$, $\dim{\G_2}=14$ and $\dim{\L^3 V^*}=35$,
the orbit is open, i.e. a $\G_2$-structure is a \emph{stable form} \cite{Hi}.
Note that there is another open orbit in $\L^3 V^*$ whose stabilizer is $\G_2^*$, the
split real form of $(\G_2)_{\mathbb{C}}$ with $\pi_1(\G_2^*)=\mathbb{Z}_2$ \cite{Br1}.
\end{remark}
Since $\G_2\subseteq \SO(7)$, a $\G_2$-structure induces a Euclidean metric and an orientation
on $V$ as follows \cite{CLSS}:
\begin{lemma}\label{le:g2onvs}
Let $V$ be a seven-dimensional vector space and $\varphi$ be a $\G_2$-structure on $V$. Then
$\varphi$ induces a unique Euclidean metric $g_{\varphi}$ and a unique metric volume form $vol_{\varphi}$ on $V$ such that each adapted basis $(e_1,\ldots,e_7)$ for $\varphi$ is an oriented orthonormal basis of $V$. For all $v,w\in V$, the Euclidean metric $g_{\varphi}$ and the metric volume form $vol_{\varphi}$ are given by the formula
\begin{equation*}
g_{\varphi}(v,w) vol_{\varphi}=(v\hook \varphi)\wedge (w\hook \varphi) \wedge \varphi.
\end{equation*}
\end{lemma}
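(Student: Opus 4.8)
The plan is to establish, in order, (i) that the requirement in the statement pins down $g_{\varphi}$ and $vol_{\varphi}$ uniquely, and (ii) the explicit formula.

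For (i): adapted bases exist by the definition of a $\G_2$-structure, so at least one pair consisting of a Euclidean metric and a volume form making a fixed adapted basis oriented orthonormal exists, and it is evidently unique once that basis is fixed. I would then show independence of the chosen adapted basis: if $(e_1,\dots,e_7)$ and $(f_1,\dots,f_7)$ are both adapted and $A\in\GL(V)$ is the map with $Af_i=e_i$, then $A^*e^i=f^i$, hence $A^*\varphi=\varphi$, i.e. $A$ lies in the stabilizer of $\varphi$. By the remark above this stabilizer is isomorphic to $\G_2\subseteq\SO(7)$; concretely, written in the basis $(e_i)$, $A$ is an orthogonal matrix of determinant $1$, from which one reads off directly that the metric and volume form determined by $(e_i)$ agree with those determined by $(f_i)$. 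This makes $g_{\varphi}$ and $vol_{\varphi}$ well defined.

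For (ii): I would fix one adapted basis $(e_1,\dots,e_7)$, so that $\varphi$ is given by \eqref{eq:G2structure}, $g_{\varphi}=\sum_i e^i\otimes e^i$ and $vol_{\varphi}=e^{1234567}$. Since both sides of the asserted identity are canonically built from $\varphi$, it suffices to compare all their components in this single basis, i.e. to check $(e_i\hook\varphi)\wedge(e_j\hook\varphi)\wedge\varphi=\delta_{ij}\,e^{1234567}$ up to a fixed universal constant. The map $(v,w)\mapsto(v\hook\varphi)\wedge(w\hook\varphi)\wedge\varphi$ is symmetric, since interchanging the two $2$-forms costs $(-1)^{2\cdot 2}=1$, and it is $\G_2$-equivariant, being assembled only from $\varphi$ and the interior product; as $V$ is an irreducible $\G_2$-module, it is therefore a fixed scalar multiple of $(v,w)\mapsto g_{\varphi}(v,w)\,vol_{\varphi}$, and the scalar is determined by a single evaluation. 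Taking $v=w=e_1$, I would compute $e_1\hook\varphi=e^{27}+e^{35}-e^{46}$, form $(e_1\hook\varphi)\wedge(e_1\hook\varphi)$, wedge with $\varphi$, and reorder the resulting degree-$7$ monomials to $e^{1234567}$; this fixes the constant and yields the formula. One may equally well skip the representation-theoretic step and just perform the full $7+21$ component computation.

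The only real difficulty is the sign bookkeeping in this last computation: tracking the signs when reordering the degree-$7$ monomials $e^{i_1}\wedge\cdots\wedge e^{i_7}$ into $e^{1234567}$, and keeping the interior products and $2$-form products straight. Using that the $7$-dimensional representation of $\G_2$ is irreducible reduces the labour to a single such monomial, but if one prefers to stay elementary the computation, while routine, is lengthy.
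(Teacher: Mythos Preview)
The paper does not supply a proof of this lemma at all: it is stated as a known fact with a reference to \cite{CLSS}, so there is nothing to compare your argument against. Your strategy is the standard one and is sound: existence and uniqueness of $(g_\varphi,\vol_\varphi)$ follow exactly as you say from the fact that any two adapted bases differ by an element of the stabilizer $\G_2\subseteq\SO(7)$, and the formula can then be verified in a single adapted basis, either by a full component check or, more economically, by invoking irreducibility of the $7$-dimensional $\G_2$-representation to reduce to one diagonal entry.

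One caveat worth flagging: if you actually carry out the evaluation at $v=w=e_1$ with $\varphi$ as in \eqref{eq:G2structure}, you will find
\[
(e_1\hook\varphi)\wedge(e_1\hook\varphi)\wedge\varphi \;=\; 6\,e^{1234567},
\]
not $e^{1234567}$. Thus the displayed identity, read literally together with the requirement that adapted bases be \emph{orthonormal} for $g_\varphi$, is off by a constant factor; the usual form of the formula in the literature carries a factor $\tfrac{1}{6}$ on the right-hand side (equivalently, one may absorb the discrepancy by rescaling the metric and volume form by suitable powers of $6$). This is a normalization issue in the statement rather than a flaw in your method, but since you wrote that the single evaluation ``yields the formula'' you should be aware that what it actually yields is the formula with this extra constant.
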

\begin{remark}
$\G_2$-structures may be understood through the division algebra $(\mathbb{O},\langle \cdot,\cdot \rangle)$ of the octonions.
Therefore, let $1\in \mathbb{O}$ be the unit element of $\mathbb{O}$ and let $\Im\, \mathbb{O}:=\spa{1}^{\perp}$ be the imaginary octonions. Then $\varphi\in \L^3 \Im\, \mathbb{O}^*$ given by $\varphi(u,v,w):=\langle u\cdot v,w\rangle$ for $u,v,w\in \Im\, \mathbb{O}$ is a $\G_2$-structure on the seven-dimensional vector space $\Im\, \mathbb{O}$. Moreover, $\varphi$ induces in the sense of Lemma \ref{le:g2onvs} exactly the Euclidean metric $\langle \cdot,\cdot \rangle$ on $\Im\, \mathbb{O}$. For more details and for the relation of our definition to other definitions in the literature, we refer the reader to \cite{F}.
\end{remark}
Lemma \ref{le:g2onvs} tells us that a $\G_2$-structure $\varphi\in \L^3 V$ naturally induces a Euclidean metric $g_{\varphi}$ and a volume form $\vol_{\varphi}$ on $V$. Thus, we can define a Hodge star operator $\star_{\varphi}: \L^{*} V^*\rightarrow \L^{*} V^*$ by the usual requirement that for a $k$-form $\phi\in \L^k V^*$ the $(n-k)$-form $\star_{\varphi} \phi\in \L^{n-k} V^*$ is the unique $(n-k)$-form on $V$ such that for all $\psi\in \L^k V^*$ the identity
\begin{equation*}
\phi\wedge \psi= g_{\varphi} (\star_{\varphi} \phi,\psi) \vol_{\varphi}
\end{equation*}
holds. A short computation shows that the Hodge dual $\star_{\varphi} \varphi$ of the $\G_2$-structure $\varphi$ is given by
\begin{equation}\label{eq:HodgeDualG2}
\star_{\varphi} \varphi=e^{1234}+e^{1256}+e^{3456}-e^{2467}+e^{2357}+e^{1457}+e^{1367},
\end{equation}
where $e^1,\ldots,e^7$ is a dual basis of an adapted basis $(e_1,\ldots,e_7)$ for $\varphi$. Conversely, a four-form $\Psi\in \L^4 V^*$ of this kind has stabilizer $\G_2$ in $\GL^+(V)$. So if we fix an orientation on $V$, such a four-form gives rise to a Euclidean metric $g_{\Psi}$ and a $\G_2$-structure $\varphi$. In this case, $g_{\varphi}=g_{\Psi}$, $\star_{\varphi} \varphi=\Psi$, $\star_{\varphi} \Psi=\varphi$ and the orientation induced by $\varphi$ is the one fixed before \cite{Hi}. Hence, alternatively, it would also be possible to call such a four-form $\Psi$ together with an orientation a $\G_2$-structure. Even though this alternative definition is more appropriate in our case, we follow the convention in the literature and only call the three-form $\varphi$ a $\G_2$-structure. 

The set of all Hodge duals $\star_{\varphi} \varphi$ forms again an open orbit under $\GL(V)$ \cite{Hi}.
So for each Hodge dual $\star_{\varphi}\varphi$ there exists a small ball of radius $\epsilon_{\varphi}$
in $(\L^4 V^*,g_{\varphi})$ such that each four-form in this ball is again the Hodge dual of a $\G_2$-structure.
In fact, the sizes of these balls do not depend on the $\G_2$-structure $\varphi$ and the orbit is in this sense ``uniformly'' open. Namely, for two different $\G_2$-structures $\varphi_1,\varphi_2\in \L^3 V^*$ on $V$ the endomorphism of $V$ which maps an adapted basis of $\varphi_1$ onto an adapted basis of $\varphi_2$ induces an isometric isomorphism between $(\L^4 V^*,g_{\varphi_1})$ and $(\L^4 V^*,g_{\varphi_2})$. Hence, if a ball of radius $\epsilon$ with respect to $g_{\varphi_1}$ around $\star_{\varphi_1} \varphi_1$ lies in the orbit of all Hodge duals of $\G_2$-structures, then also a ball of radius $\epsilon$ with respect to $g_{\varphi_2}$ around $\star_{\varphi_2} \varphi_2$ lies in the orbit of all Hodge duals of $\G_2$-structures.

\begin{lemma}\label{le:universalconst}
There exists a universal constant $\epsilon_0>0$ such that if $\varphi\in \L^3 V^*$ is a $\G_2$-structure on a
seven-dimensional vector space $V$ and $\Psi\in \L^4 V^*$ is a four-form on $V$ which fulfills
\begin{equation*}
\left\| \Psi-\star_{\varphi}\varphi\right\|_{\varphi}<\epsilon_0
\end{equation*}
for the norm $\left\|\cdot\right\|_{\varphi}$ induced by the Euclidean metric $g_{\varphi}$ on $V$, then
$\Psi$ is the Hodge dual of a $\G_2$-structure on $V$.
\end{lemma}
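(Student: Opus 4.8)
The plan is to reduce the statement to a single fixed model space, exploiting both the openness and the $\GL$-invariance of the orbit of all Hodge duals together with the fact, already recorded in the paragraph before the lemma, that a change of adapted basis acts isometrically on four-forms.

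First I would fix once and for all the model $V_0 := \bR^7$ with standard basis $(e_1,\dots,e_7)$, the $\G_2$-structure $\varphi_0\in\L^3 V_0^*$ given by the right-hand side of \eqref{eq:G2structure}, and the four-form $\Psi_0 := \star_{\varphi_0}\varphi_0$ given by the right-hand side of \eqref{eq:HodgeDualG2}. By Lemma \ref{le:g2onvs} the standard basis is $g_{\varphi_0}$-orthonormal and oriented, so $\|\cdot\|_{\varphi_0}$ is the standard norm on $\L^4 V_0^*$. Since the set $\mathcal{O}_0\subseteq\L^4 V_0^*$ of all Hodge duals of $\G_2$-structures on $V_0$ is open and contains $\Psi_0$, there is an $\epsilon_0>0$ such that every $\Psi\in\L^4 V_0^*$ with $\|\Psi-\Psi_0\|_{\varphi_0}<\epsilon_0$ lies in $\mathcal{O}_0$; this $\epsilon_0$ will be the asserted universal constant, and it plainly depends on nothing.

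Next, given an arbitrary seven-dimensional $V$, a $\G_2$-structure $\varphi$ on it, and an adapted basis $(b_1,\dots,b_7)$ for $\varphi$, I would introduce the linear isomorphism $F:V_0\to V$, $F(e_i)=b_i$. Then $F^*\varphi=\varphi_0$ by definition of an adapted basis, and since an adapted basis is an oriented orthonormal basis for the induced metric and orientation, $F$ is an orientation-preserving isometry from $(V_0,g_{\varphi_0})$ to $(V,g_\varphi)$; hence $F^*$ commutes with the two Hodge star operators, which gives $F^*(\star_\varphi\varphi)=\star_{\varphi_0}(F^*\varphi)=\Psi_0$, and $F^*:(\L^4 V^*,g_\varphi)\to(\L^4 V_0^*,g_{\varphi_0})$ is an isometry of normed spaces. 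Moreover $F^*$ maps the orbit of all Hodge duals of $\G_2$-structures on $V$ bijectively onto $\mathcal{O}_0$: if $\psi$ is a $\G_2$-structure on $V$, then $F^*\psi$ is a $\G_2$-structure on $V_0$ and $F^*(\star_\psi\psi)=\star_{F^*\psi}(F^*\psi)\in\mathcal{O}_0$, and symmetrically for $(F^{-1})^*$.

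Finally I would conclude: if $\Psi\in\L^4 V^*$ satisfies $\|\Psi-\star_\varphi\varphi\|_\varphi<\epsilon_0$, then applying the isometry $F^*$ yields $\|F^*\Psi-\Psi_0\|_{\varphi_0}<\epsilon_0$, so $F^*\Psi\in\mathcal{O}_0$, i.e.\ $F^*\Psi=\star_{\psi_0}\psi_0$ for some $\G_2$-structure $\psi_0$ on $V_0$; pulling back by $(F^{-1})^*$ then shows that $\Psi=(F^{-1})^*(F^*\Psi)$ is the Hodge dual of the $\G_2$-structure $(F^{-1})^*\psi_0$ on $V$, as required. I do not expect a genuinely hard step: the argument is essentially the one sketched just before the lemma, made quantitative by transporting one fixed ball through basis changes. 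The only points needing a line of care are the identity $F^*(\star_\varphi\varphi)=\Psi_0$ — which rests on $F$ intertwining Hodge stars because it carries one oriented orthonormal adapted basis to another — and the observation, immediate from the $\GL$-invariance of the orbit of Hodge duals, that $F^*$ and $(F^{-1})^*$ preserve the property of being such a Hodge dual.
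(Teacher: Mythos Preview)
Your proof is correct and follows essentially the same approach as the paper: the paragraph immediately preceding the lemma already sketches the argument---openness of the orbit of Hodge duals plus the fact that the map sending one adapted basis to another is an isometry between the corresponding normed spaces $(\L^4 V^*,g_\varphi)$---and you have simply written this out carefully, working with a fixed model on $\bR^7$ rather than comparing two structures on the same $V$.
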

\subsection{Cocalibrated $\G_2$-structures on manifolds and Lie algebras}\label{G2onmfs}
A \emph{$\G_2$-structure on a seven-dimensional manifold $M$} is by definition a reduction of the frame bundle
$\GL(M)$ to $\G_2\subseteq \GL_7(\bR)$. Since $\G_2$ is conjugated to the stabilizer of a $\G_2$-structure on the vector space $\bR^7$, 
there exists a one-to-one correspondence between $\G_2$-structures on $M$ and three-forms $\varphi\in \Omega^3 M$
such that $\varphi_p\in \L^3 T_p M^*$ is a $\G_2$-structure on $T_p M$ for all $p\in M$. In the following, we also call such a three-form $\varphi\in \Omega^3 M$ a \emph{$\G_2$-structure}. One can show that $\G_2$-structures exist exactly when $M$ is orientable and spin \cite{J}.

A $\G_2$-structure $\varphi\in \Omega^3 M$ on a seven-dimensional manifold induces a Riemannian metric and an orientation on $M$ by applying pointwise the construction described above. Hence, we get a Hodge dual operator $\star_{\varphi}:\Omega^* M\rightarrow \Omega^* M$ depending on the $\G_2$-structure $\varphi\in \Omega^3 M$. The $\G_2$-structure $\varphi\in \Omega^3 M$ is called \emph{cocalibrated} if $d \star_{\varphi} \varphi=0$. Note that a $\G_2$-structure is torsion-free if and only if $d\varphi=d\star_{\varphi}\varphi=0$, cf. \cite{FG}.

We concentrate on left-invariant $\G_2$-structures on Lie groups $\G$. These are in one-to-one correspondence to $\G_2$-structures on the corresponding Lie algebra $\g$. If we use this identification as usual to define a differential $d_{\g}$ on $\L^{*}\g^*$, we are able to speak of \emph{cocalibrated $\G_2$-structures on the Lie algebra $\g$}.

\subsection{Three-dimensional Lie algebras}\label{subsec:3dLA}
The classification of three-dimensional Lie algebras is well-known \cite{Bi} and given in the appendix in Table \ref{table:3d}. We highlight some aspects of the classification which we use later on in this article.
\begin{lemma}\label{le:Unimodular3d}
Let $\g$ be a three-dimensional unimodular Lie algebra.
\begin{enumerate}
\item
There exists a basis $e_1,e_2,e_3$ of $\g$ and $\tau_1,\tau_2,\tau_3\in \left\{-\frac{1}{2},0,\frac{1}{2}\right\}$ such that\\
$de^i=\tau_i \sum_{j,k=1}^3 \epsilon_{ijk} e^{jk}$ for $i=1,2,3$.
\item
$d( \mathfrak{g}^*)\wedge \left.\ker d\right|_{ \mathfrak{g}^*}=\{0\}$.
\item
There exists a linear map $g:\L^2 \g^*\rightarrow \left. \ker d\right|_{ \mathfrak{g}^*}$ such that for the map $G:\L^2 \g^*\rightarrow \L^3 \g^*$, $G(\omega):=\omega\wedge g(\omega)$ for $\omega\in \L^2 \g^*$, the identity $G^{-1}(0)=d( \mathfrak{g}^*)$ is true.
\item
If $\tau_i \tau_j\geq 0$ for all $i,j\in \{1,2,3\}$, i.e. $\mathfrak{g}\notin\{e(1,1),\mathfrak{so}(2,1)\}$, then $F^{-1}(0)=\ker d|_{ \g^*}$, where $F:\g^*\rightarrow \L^3 \g^*$ is defined by $F(\alpha):=d(\alpha)\wedge \alpha$ for $\alpha\in \g^*$.
\end{enumerate}
\end{lemma}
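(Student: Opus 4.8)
The plan is to produce an explicit basis realizing the desired structure equations and then to verify the three remaining assertions by direct computation in that basis. The starting point is the Milnor-type classification of three-dimensional unimodular Lie algebras: on such a $\g$ there is an inner product and an orthonormal basis $e_1,e_2,e_3$ in which the structure constants are diagonal in the sense that $[e_j,e_k]=\sum_i \lambda_i \epsilon_{ijk} e_i$ for suitable real $\lambda_1,\lambda_2,\lambda_3$, each of which may be normalized to lie in $\{-1,0,1\}$ (the unimodularity is precisely what forces the off-diagonal part to vanish). Dualizing, $de^i = -\lambda_i \sum_{j,k}\epsilon_{ijk} e^{jk}$, so rescaling the $e^i$ by a factor of $2$ and absorbing signs we may take $\tau_i\in\{-\tfrac12,0,\tfrac12\}$ with $de^i=\tau_i\sum_{j,k}\epsilon_{ijk}e^{jk}$. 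This gives (a). The bookkeeping with signs and the identification of which sign patterns correspond to which Lie algebra in Table~\ref{table:3d} is the one mildly fiddly point, but it is standard.

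For (b), note that $\ker d|_{\g^*}=\spa{e^i : \tau_i=0}$ and $d(\g^*)=\spa{e^{jk} : \tau_i\neq 0,\ \{i,j,k\}=\{1,2,3\}}$. A wedge $e^{jk}\wedge e^\ell$ is nonzero only when $\ell=i$, i.e. when $\tau_i\neq 0$; but $e^i$ lies in $\ker d|_{\g^*}$ only when $\tau_i=0$. Hence every product of an element of $d(\g^*)$ with an element of $\ker d|_{\g^*}$ vanishes, which is exactly (b). For (c), one wants a linear right inverse, on the level of the pairing $\o\mapsto \o\wedge g(\o)$, detecting exactness: define $g$ on the basis of $\L^2\g^*$ by sending $e^{jk}$ (with $\{i,j,k\}=\{1,2,3\}$) to $e^i$ if $\tau_i=0$ and to, say, $0$ if $\tau_i\neq 0$, and extend linearly. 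Then for $\o=\sum a_i e^{jk}$ one computes $G(\o)=\o\wedge g(\o)=\big(\sum_{\tau_i=0} a_i^2\big)\, e^{123}$ (the cross terms vanish by the same incidence argument as in (b), and the $\tau_i\neq0$ terms are killed by $g$), so $G(\o)=0$ iff $a_i=0$ for all $i$ with $\tau_i=0$, iff $\o\in\spa{e^{jk}:\tau_i\neq0}=d(\g^*)$. That is $G^{-1}(0)=d(\g^*)$, giving (c). Here I should double-check that $G$ need not be bilinear — the statement only asks that $g$ be linear and $G=\o\wedge g(\o)$, so the quadratic form appearing is fine.

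For (d), with $F(\a)=d\a\wedge\a$ and $\a=\sum_i a_i e^i$, one has $d\a=\sum_i a_i\tau_i\sum_{j,k}\epsilon_{ijk}e^{jk}$, so $d\a\wedge\a=\big(\sum_i a_i^2\tau_i\big)\cdot 2\, e^{123}$ (each term $e^{jk}\wedge e^\ell$ contributes only when $\ell=i$). Under the hypothesis that $\tau_i\tau_j\ge 0$ for all $i,j$, the nonzero $\tau_i$ all share a sign, so $\sum_i a_i^2\tau_i=0$ forces $a_i=0$ whenever $\tau_i\neq 0$, i.e. $\a\in\spa{e^i:\tau_i=0}=\ker d|_{\g^*}$; conversely every element of the kernel obviously lies in $F^{-1}(0)$. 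Thus $F^{-1}(0)=\ker d|_{\g^*}$. The only thing requiring care is the translation ``$\tau_i\tau_j\ge 0$ for all $i,j$'' $\iff$ $\g\notin\{e(1,1),\so(2,1)\}$: among the unimodular three-dimensional Lie algebras, exactly $e(1,1)$ and $\so(2,1)$ have a sign pattern of the $\tau_i$ that mixes $+\tfrac12$ and $-\tfrac12$, which one reads off from Table~\ref{table:3d}. I expect the main obstacle to be purely organizational — pinning down the sign conventions so that the $\tau_i$ in (a) match the named algebras and the exceptional list in (d) comes out correctly — rather than any genuine difficulty; once the normal form (a) is in hand, parts (b)--(d) are one-line computations with $e^{123}$.
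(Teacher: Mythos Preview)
Your proof is correct and follows essentially the same route as the paper: both take the diagonal normal form (a) as known (the paper cites Bianchi, you cite Milnor) and then verify (b)--(d) by direct computation in that basis, defining the same map $g$ in (c) and reducing (d) to the quadratic form $\sum_i \tau_i a_i^2$. The only differences are cosmetic: you first identify $\ker d|_{\g^*}$ and $d(\g^*)$ as explicit spans before wedging in (b), whereas the paper computes $\omega\wedge\beta$ for general $\omega=d\alpha$ and $\beta$ directly; and your normalization in (a) is phrased slightly loosely (no rescaling of the $e^i$ is actually needed---the factor $\tfrac12$ comes from the double-counting in $\sum_{j,k}\epsilon_{ijk}e^{jk}$), but you already flag this as bookkeeping.
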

\begin{proof}
We use the well-known part (a) \cite{Bi} to show (b)-(d).
\begin{itemize}
\item[(b)]
Let $\omega=d \alpha$, $\alpha=\sum_{i=1}^3 a_i e^i\in  \g^*$ and $\beta=\sum_{i=1}^3 b_i e^i\in  \g^*$. Then
\begin{equation}\label{eq1:3dLA}
\omega=\sum_{i,j,k=1}^3 \tau_i a_i \epsilon_{ijk} e^{jk}
\end{equation}
and so
\begin{equation}\label{eq2:3dLA}
\begin{split}
\omega\wedge \beta &=\sum_{i,j,k,l=1}^3 \tau_i a_i b_l \epsilon_{ijk} e^{jkl}=\sum_{i,j,k,l=1}^3 \tau_i a_i b_l \epsilon_{ijk} \epsilon_{jkl} e^{123}\\
&=\left(\sum_{i=1}^3 2\tau_i a_i b_i\right) e^{123}.
\end{split}
\end{equation}
If $d\beta=\sum_{i,j,k=1}^3 \tau_i b_i \epsilon_{ijk} e^{jk}=0$, then $\tau_i b_i=0$ for all $i=1,2,3$ and so $\omega\wedge \beta=0$. This shows (b).
\item[(c)]
Let $\omega\in \L^2 \g^*$. Then $\omega=\sum_{i,j,k=1}^3 a_i \epsilon_{ijk} e^{jk}$ for unique $a_1,a_2,a_3\in \bR$. Set $g(\omega):=\sum_{i=1, \tau_i=0}^3 a_i e^i$. Then Equation (\ref{eq1:3dLA}) shows that $g(\omega)\in \ker d|_{ \g^*}$. Moreover,
\begin{equation*}
\begin{split}
\omega\wedge g(\omega)&=\sum_{i,j,k,l=1, \tau_l= 0}^3 a_i a_l \epsilon_{ijk} e^{jkl}=\left(\sum_{i,j,k,l=1, \tau_l= 0}^3 a_i a_l \epsilon_{jki} \epsilon_{jkl}\right) e^{123}\\
&=\left(\sum_{i,l=1, \tau_l= 0}^3 2 a_i a_l \delta_{il}\right)e^{123}=\left(\sum_{l=1, \tau_l= 0}^3 2 a_l^2\right)e^{123}=0
\end{split}
\end{equation*}
if and only if $\tau_l=0$ implies $a_l=0$ for $l=1,2,3$. But Equation (\ref{eq1:3dLA}) shows that this is equivalent to $\omega\in d(\g^*)$.
\item[(d)]
The signs of the non-zero $\tau_i$ are all the same due to the assertion. Let $\alpha=\sum_{i=1}^3 a_i e^i\in \g^*$, $a_1,a_2,a_3\in \bR$. Then Equation (\ref{eq2:3dLA}) implies that $d\alpha\wedge\alpha=0$ if and only if $\sum_{i=1}^3 \tau_i a_i^2=0$ and this is the case if and only if $\tau_i a_i=0$ for all $i=1,2,3$. But Equation (\ref{eq1:3dLA}) states that this is equivalent to $\alpha\in \ker d|_{ \g^*}$.
\end{itemize}
\end{proof}
Recall that a finite-dimensional Lie algebra is called \emph{almost Abelian} if it admits a codimension one Abelian ideal. All solvable three-dimensional Lie algebras $\g$ are almost Abelian. Namely, if $\g$ is additionally unimodular, then, by elementary Lie theory, there exists a codimension one ideal, which then has to be unimodular and so Abelian. If $\g$ is not unimodular, then the unimodular kernel gives a codimension one Abelian ideal. The differential of an almost Abelian Lie algebra is particularly simple, cf. \cite{F} or Lemma \ref{le:codimensiononeideal} below. Hence, we obtain
\begin{lemma}\label{le:3d}
Let $\g$ be a three-dimensional solvable Lie algebra. Then $\g^*$ admits a vector space decomposition $\g^*=W_2\op \spa{e^3}$, $W_2$ two-dimensional, and a linear map $f:W_2\rightarrow W_2$ such that $d\alpha=f(\alpha)\wedge e^3$ for all $\alpha\in W_2$ and $de^3=0$. If $\tr(f)\neq 0$, $\frac{\det(f)}{\tr(f)^2}$ only depends on the Lie algebra $\g$. Moreover, $\tr(f)=0$ exactly when $\g$ is unimodular.
\end{lemma}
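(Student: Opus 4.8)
The plan is to exploit the well-known structure theory of solvable Lie algebras in low dimension, specifically the fact that every three-dimensional solvable Lie algebra $\g$ contains a codimension-one abelian ideal $\af$ (as just argued in the text preceding the statement: the unimodular kernel if $\g$ is not unimodular, and an arbitrary codimension-one ideal, forced to be unimodular hence abelian, if $\g$ is unimodular). Fix such an $\af$, pick any $e_3 \in \g \setminus \af$, and let $W_2 := \Ann{\spa{e_3}} \cong \af^*$, so that $\g^* = W_2 \oplus \spa{e^3}$ where $e^3$ is dual to $e_3$ and vanishes on $\af$; here I am using the identification $\L^* \Ann{\spa{e_3}} \cong \L^* \af^*$ from Conventions \ref{conventions}. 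First I would check that $de^3 = 0$: for $X,Y \in \af$ we have $de^3(X,Y) = -e^3([X,Y]) = 0$ since $\af$ is abelian, and $de^3(X,e_3) = -e^3([X,e_3]) = 0$ since $[X,e_3] \in \af = \ker e^3$ ($\af$ being an ideal). Hence $e^3 \in \ker d|_{\g^*}$.

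Next I would produce the linear map $f$. For $\alpha \in W_2$, i.e. $\alpha \in \af^0$, and $X,Y \in \af$, we get $d\alpha(X,Y) = -\alpha([X,Y]) = 0$, so $d\alpha$ has no $\L^2 W_2$-component; thus $d\alpha \in W_2 \wedge \spa{e^3}$, and we may write $d\alpha = f(\alpha) \wedge e^3$ for a unique $f(\alpha) \in W_2$. Linearity of $f$ is immediate from linearity of $d$. Concretely, $f(\alpha) = -(\ad_{e_3})^*(\alpha)$ restricted appropriately: $f(\alpha)(X) = d\alpha(X,e_3) = -\alpha([X,e_3])$ for $X \in \af$, so $f = -(\ad_{e_3}|_{\af})^*$ under the identification $W_2 \cong \af^*$. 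This already exhibits the decomposition and the map claimed in the first two sentences of the statement.

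Now I would address the invariance and the unimodularity criterion. For unimodularity: $\g$ is unimodular iff $\tr(\ad_X) = 0$ for all $X \in \g$; since $\af$ is abelian, $\ad_X$ for $X \in \af$ is nilpotent (its image lies in $[\g,\g] \subseteq \af$ and it kills $\af$), so the only condition is $\tr(\ad_{e_3}) = 0$, and $\tr(\ad_{e_3}) = \tr(\ad_{e_3}|_{\af}) = -\tr(f)$ (the dual of an endomorphism has the same trace, and the $e_3$-block contributes $0$). Hence $\tr(f) = 0 \iff \g$ unimodular. For the invariant $\det(f)/\tr(f)^2$ when $\tr(f) \neq 0$: I would check that any two admissible choices of data $(\af, e_3, W_2, f)$ and $(\af', e_3', W_2', f')$ give conjugate $f$'s up to scale. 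The ideal $\af$ is canonical in the non-unimodular case (it is the unimodular kernel); changing $e_3$ to $e_3' = \lambda e_3 + a$ with $\lambda \neq 0$, $a \in \af$ replaces $f$ by $\lambda f$ composed with an automorphism of $\af$ induced by the change, hence $\det$ scales by $\lambda^{\dim \af} = \lambda^2$ times $(\det$ of that automorphism$)$ and $\tr$ scales by $\lambda$ times nothing relevant — so I must be a little careful and instead argue at the level of the abstract endomorphism $\ad_{e_3}|_{\af}$ of the two-dimensional space $\af$: its conjugacy class is well-defined once $e_3$ is fixed modulo $\af$, and rescaling $e_3$ multiplies it by $\lambda$, under which $\det(\ad)/\tr(\ad)^2$ is invariant; this ratio is then a genuine isomorphism invariant of $\g$. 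The main obstacle is precisely this last bookkeeping — keeping track of how $f$ transforms under all the choices (especially reconciling the non-unimodular case, where $\af$ is rigid, with the unimodular case, where $\tr(f)=0$ makes the ratio irrelevant anyway) — but it is routine linear algebra, and the cleanest route is to phrase everything in terms of the intrinsic endomorphism $\ad_{e_3}|_{\af}$ rather than its transpose $f$, then transfer at the end. Comparing with Table \ref{table:3d} gives the explicit values and closes the proof.
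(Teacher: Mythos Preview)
Your approach is correct and is exactly what the paper intends: the paper states the lemma as an immediate consequence of the almost Abelian structure established in the preceding paragraph, deferring the description of the differential to Lemma~\ref{le:codimensiononeideal} (which is the $n$-dimensional version of the computation you carry out by hand). One small slip: when you write ``For $\alpha\in W_2$, i.e.\ $\alpha\in\af^0$'' you mean $\alpha\in\Ann{\spa{e_3}}$, not $\Ann{\af}$; the computation that follows is unaffected since the vanishing of $d\alpha(X,Y)$ comes from $[X,Y]=0$ in the abelian ideal, which is what you use.
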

\begin{remark}
The only non-solvable three-dimensional Lie algebras are $\mathfrak{so}(3)$ and $\mathfrak{so}(2,1)$.
\end{remark}
We recapitulate the definition of a contact form on an odd-dimensional Lie algebra.
\begin{definition}
Let $\g$ be a $(2m+1)$-dimensional Lie algebra. A \emph{contact form} on $\g$ is a one-form $\alpha\in  \g^*$ such that $\alpha\wedge (d\alpha)^m\neq 0$. For $m=1$, the case we are interested in, the condition simply is $\alpha\wedge d\alpha\neq 0$.
\end{definition}
In Section \ref{sec:class}, we need a classification of the three-dimensional Lie algebras which do admit a contact form. This classification is well-known \cite{D} and straightforward to prove:
\begin{lemma}\label{le:contact}
A three-dimensional Lie algebra does not admit a contact form if and only if $\g$ is solvable and $f$ as in Lemma \ref{le:3d} is a multiple of the identity. So $\g$ admits a contact-form if and only if $\g\notin\{\bR^3, \mathfrak{r}_{3,1}\}$. 
\end{lemma}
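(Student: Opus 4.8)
The plan is to treat the non-solvable and the solvable cases separately, using Lemma~\ref{le:Unimodular3d} for the former and Lemma~\ref{le:3d} for the latter, and then to read off the exceptional list $\{\bR^3,\mathfrak{r}_{3,1}\}$ from the classification in Table~\ref{table:3d}.

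First, if $\g$ is not solvable then $\g\in\{\so(3),\so(2,1)\}$, and both of these are unimodular. I would apply Lemma~\ref{le:Unimodular3d}(a) to obtain a basis $e_1,e_2,e_3$ with $de^i=\tau_i\sum_{j,k}\epsilon_{ijk}e^{jk}$; for $\so(3)$ and $\so(2,1)$ all three $\tau_i$ are nonzero (they equal $\pm\tfrac12$). A one-line computation then gives $e^1\wedge de^1=2\tau_1\,e^{123}\neq 0$, so $e^1$ is a contact form. (The same computation shows more generally that every unimodular three-dimensional Lie algebra other than $\bR^3$ admits a contact form, since in all those cases at least one $\tau_i$ is nonzero.)

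Next, assume $\g$ is solvable and take the decomposition $\g^*=W_2\oplus\spa{e^3}$ and the linear map $f\colon W_2\to W_2$ from Lemma~\ref{le:3d}, so that $d\alpha=f(\alpha)\wedge e^3$ for $\alpha\in W_2$ and $de^3=0$. For an arbitrary one-form $\beta=\alpha+ce^3$ with $\alpha\in W_2$ and $c\in\bR$, I would compute
\[
\beta\wedge d\beta=(\alpha+ce^3)\wedge\bigl(f(\alpha)\wedge e^3\bigr)=\alpha\wedge f(\alpha)\wedge e^3,
\]
the term containing $e^3\wedge(\cdots)\wedge e^3$ vanishing. If $f$ is a multiple of the identity, then $\alpha\wedge f(\alpha)=0$ for every $\alpha$, so no $\beta$ is a contact form. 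Conversely, if $f$ is not a multiple of the identity, the standard fact that a linear operator on a two-dimensional space all of whose vectors are eigenvectors must be a scalar multiple of the identity produces some $\alpha\in W_2$ for which $\alpha$ and $f(\alpha)$ are linearly independent; since also $e^3\notin W_2$, the three one-forms $\alpha,f(\alpha),e^3$ are linearly independent and hence $\beta\wedge d\beta=\alpha\wedge f(\alpha)\wedge e^3\neq 0$, so $\alpha$ is a contact form. Together with the non-solvable case this establishes the first assertion.

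For the final statement I still have to identify the solvable $\g$ for which $f$ is a multiple of the identity. If $f=\lambda\,\id$ with $\lambda=0$, then $d\equiv 0$ and $\g\cong\bR^3$. If $\lambda\neq 0$, then $\tr(f)\neq 0$, so $\g$ is non-unimodular, and after rescaling the complement $\spa{e^3}$ one may take $\lambda=1$, which up to relabelling yields $[e_1,e_2]=e_2$, $[e_1,e_3]=e_3$, i.e.\ $\g\cong\mathfrak{r}_{3,1}$; equivalently, one checks from Table~\ref{table:3d} that among the non-unimodular three-dimensional Lie algebras only $\mathfrak{r}_{3,1}$ has $f$ conjugate to the identity (for instance $\mathfrak{r}_3$ has $f$ a single nontrivial Jordan block, $\mathfrak{r}_{3,\mu}$ with $\mu\neq 1$ has two distinct real eigenvalues, and the remaining ones have non-real eigenvalues). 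Hence $\g$ admits no contact form exactly when $\g\in\{\bR^3,\mathfrak{r}_{3,1}\}$. Every individual computation is short, so I do not expect a serious obstacle; the two points deserving care are the implicit well-definedness in Lemma~\ref{le:3d} (whether $f$ is a multiple of the identity is independent of the chosen decomposition, which is clear since it is equivalent to the intrinsic condition $\alpha\wedge d\alpha=0$ for all $\alpha\in\g^*$) and the fact that the trace--determinant invariant of Lemma~\ref{le:3d} alone does not distinguish $\mathfrak{r}_{3,1}$ from $\mathfrak{r}_3$ (both have $\det(f)/\tr(f)^2=\tfrac14$), so one must genuinely use that $f$ is a scalar matrix.
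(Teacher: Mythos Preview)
Your argument is correct. The paper does not actually prove this lemma: it only remarks that the classification is well-known, cites \cite{D}, and calls it ``straightforward to prove''. Your proof is exactly the kind of short verification the paper alludes to, and your closing observations about well-definedness and about $\mathfrak{r}_3$ versus $\mathfrak{r}_{3,1}$ are apt.
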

\subsection{Four-dimensional Lie algebras}\label{subsec:4dLA}
A classification of all four-dimensional Lie algebra has first been achieved by Mubarakzyanov \cite{Mu}. We give a complete list in Table \ref{table:4d}. In \cite{ABDO}, it is proven that each four-dimensional solvable Lie algebra admits a codimension one unimodular ideal. Since the only simple Lie algebras up to dimension four are $\so(3)$ and $\so(2,1)$, it is an immediate consequence of Levi's decomposition theorem that the non-solvable four-dimensional Lie algebras are exactly $\so(3)\op \bR$ and $\so(2,1)\op \bR$. This shows the first part of
\begin{lemma}\label{le:4dcodim1}
Let $\g$ be a four-dimensional Lie algebra. Then $\g$ admits a codimension one unimodular ideal $\uf$. $\uf$ is unique if and only if $\g$ is not unimodular or $\dim{[\g,\g]}=3$. In these cases $\uf$ is the unimodular kernel or the commutator ideal $[\g,\g]$ of $\g$, respectively.
\end{lemma}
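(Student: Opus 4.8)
\emph{Proof proposal.}

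The first assertion is essentially the content of the paragraph preceding the lemma: for solvable $\g$ it is \cite{ABDO}, while the only non-solvable four-dimensional Lie algebras are $\so(3)\op\bR$ and $\so(2,1)\op\bR$, in which the simple (hence unimodular) summand is a codimension one ideal. So the real task is the uniqueness characterisation, and my plan rests on two elementary remarks. \textbf{Remark 1:} if $\mathfrak{h}\subseteq\g$ is an ideal and $X\in\mathfrak{h}$, then $\ad_X^{\g}$ maps $\g$ into $[X,\g]\subseteq\mathfrak{h}$, so in a basis of $\g$ extending one of $\mathfrak{h}$ the matrix of $\ad_X^{\g}$ is block upper triangular with vanishing lower block; hence $\tr(\ad_X^{\g})=\tr(\ad_X^{\g}|_{\mathfrak{h}})=\tr(\ad_X^{\mathfrak{h}})$, i.e. the trace taken in $\g$ agrees with the trace taken in $\mathfrak{h}$. \textbf{Remark 2:} a codimension one ideal of $\g$ necessarily contains $[\g,\g]$ (the quotient is one-dimensional, hence abelian), and conversely every subspace containing $[\g,\g]$ is an ideal; thus the codimension one ideals of $\g$ are exactly the preimages of the codimension one subspaces of $\g/[\g,\g]$.

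Next I dispose of the non-unimodular case. Here the unimodular kernel $\uf=\ker(X\mapsto\tr\,\ad_X^{\g})$ is a codimension one ideal. If $\mathfrak{v}$ is any codimension one unimodular ideal, then Remark 1 gives $\tr(\ad_X^{\g})=\tr(\ad_X^{\mathfrak{v}})=0$ for every $X\in\mathfrak{v}$, so $\mathfrak{v}\subseteq\uf$ and, comparing dimensions, $\mathfrak{v}=\uf$. Hence in this case $\uf$ is unique and equals the unimodular kernel.

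For the unimodular case I run Remark 1 in the other direction. By Remark 2 a codimension one ideal $\mathfrak{w}$ contains $[\g,\g]$, so for $X\in\mathfrak{w}$ one has $\ad_X^{\g}(\g)\subseteq[\g,\g]\subseteq\mathfrak{w}$ and therefore $\tr(\ad_X^{\mathfrak{w}})=\tr(\ad_X^{\g})=0$; thus, when $\g$ is unimodular, \emph{every} codimension one ideal is automatically unimodular. Since a codimension one ideal exists (first assertion), $[\g,\g]$ is proper, i.e. $\dim{[\g,\g]}\le 3$. If $\dim{[\g,\g]}=3$, then $\g/[\g,\g]$ is one-dimensional with the single codimension one subspace $\{0\}$, so $[\g,\g]$ is the unique codimension one ideal; it is unimodular by the above, hence the unique codimension one unimodular ideal, and $\uf=[\g,\g]$. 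If $\dim{[\g,\g]}\le 2$, then $\g/[\g,\g]$ has dimension at least two and hence infinitely many codimension one subspaces, producing infinitely many codimension one unimodular ideals, so $\uf$ is not unique.

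Combining the three cases yields the claimed equivalence together with the stated description of $\uf$. I would close by remarking that the two descriptions agree whenever both conditions hold: if $\g$ is non-unimodular with $\dim{[\g,\g]}=3$ then $[\g,\g]\subseteq\uf$ and equality of dimensions forces $\uf=[\g,\g]$; and $[\g,\g]$ is in any case a unimodular Lie algebra, because the trace of a sum of commutators of endomorphisms vanishes. The only point requiring care — and the only place where anything beyond dimension counting occurs — is Remark 1, that is, identifying \emph{intrinsic} unimodularity of a codimension one subalgebra with the vanishing of $\tr\,\ad_X^{\g}$ on it; once that identification is in place the rest is bookkeeping with the commutator ideal and dimensions.
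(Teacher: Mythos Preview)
Your argument is correct, and in fact cleaner than the paper's. The one real comparison worth drawing concerns the non-uniqueness when $\g$ is unimodular with $\dim{[\g,\g]}\le 2$. The paper handles this by inspection: it invokes the classification Table~\ref{table:4d} to conclude that $\g=\h\op\bR$ with $\h$ three-dimensional unimodular solvable, or $\g=A_{4,1}$, and then writes down two distinct codimension one unimodular ideals by hand in each of these cases. You instead prove the general fact that every codimension one ideal of a unimodular Lie algebra is itself unimodular (a direct consequence of your Remark~1), and then observe via Remark~2 that such ideals are parametrised by hyperplanes in $\g/[\g,\g]$, of which there are infinitely many once $\dim{\g/[\g,\g]}\ge 2$. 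Your route is classification-free and works in any dimension; the paper's route, by contrast, delivers the extra information of the \emph{isomorphism types} of the non-unique ideals (e.g.\ that $A_{4,1}$ has both an abelian and a Heisenberg codimension one ideal), which is used elsewhere in the paper. The remaining cases --- $\g$ non-unimodular, or $\dim{[\g,\g]}=3$ --- are treated by the same mechanism in both proofs.
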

\begin{proof}
If $\g$ is not unimodular, then the unimodular kernel has codimension one and each unimodular ideal of $\g$ is an ideal of the unimodular kernel. Thus, a codimension one unimodular ideal has to coincide with the unimodular kernel. The commutator ideal $[\g,\g]$ is a unimodular ideal and contained in each codimension one ideal. Thus, the uniqueness statement follows if $\dim{[\g,\g]}=3$.

If $\g$ is unimodular and $\dim{[\g,\g]}<3$, then, by inspecting Table \ref{table:4d}, we see that $\g=\h\op \bR$ with a three-dimensional unimodular solvable Lie algebra $\h$ or $\g=A_{4,1}$. In the former cases, the first summand $\h$ in $\h\op \bR$ is a unimodular codimension one ideal and the direct sum of an Abelian codimension one ideal of $\h$ and the $\bR$ summand gives a different unimodular codimension one ideal in $\g$. For $\g=A_{4,1}$, in the dual basis $e_1,e_2,e_3,e_4$ of the basis $e^1,e^2,e^3,e^4$ of $A_{4,1}^*$ given in Table \ref{table:4d}, the subspace $\spa{e_1,e_2,e_3}$ is an Abelian codimension one ideal whereas $\spa{e_1,e_2,e_4}$ is a codimension one ideal isomorphic to $\mathfrak{h}_3$.
\end{proof}
The exterior derivative of a Lie algebra with a codimension one unimodular ideal takes a particular nice form which turns out to be useful for many of the explicit computations we do in Section \ref{sec:class}.
\begin{lemma}\label{le:codimensiononeideal}
Let $\g$ be an $n$-dimensional Lie algebra which admits a codimension one unimodular ideal $\uf\subseteq \g$. Let $e_n\in \g\backslash \uf$ and $e^n\in \Ann{\uf}$ with $e^n(e_n)=1$. As described in Conventions \ref{conventions}, we identify $\L^*\Ann{e_n}$ with $\L^*\uf^*$ via the decomposition $\g=\uf\oplus \spa{e_n}$. Then the following statements are true:
\begin{enumerate}
\item
$d_{\g} e^n=0$ and there exists $f\in \mathfrak{gl}\left(\uf^*\right)$ such that
$d_{\g}\alpha=d_{\uf} \alpha+f(\alpha)\wedge e^n$ for all $\alpha\in \uf^*$.
\item
$d_{\g}(\omega\wedge e^n)=d_{\uf}(\omega)\wedge e^n$ for all $\omega\in \L^*\uf^*$.
\item
$d_{\g}(\Lambda^{n-2} \uf^*)\subseteq \Lambda^{n-2}\uf^*\wedge e^n$.
\item
$d_{\g}(\Lambda^{n-2}\uf^*\wedge e^n)=\{0\}$. Moreover, $d_{\g}(\Lambda^{n-1}\uf^*)=\{0\}$ exactly when $\g$ is unimodular.
\end{enumerate}
\end{lemma}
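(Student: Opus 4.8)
The plan is to prove the four assertions of Lemma~\ref{le:codimensiononeideal} by exploiting the single structural input: $\uf$ is a codimension-one ideal, so $[\g,\g]\subseteq \uf$, hence $e^n\colon \g\to\bR$ is a Lie algebra homomorphism and $d_\g e^n=0$. I would first fix the decomposition $\g=\uf\oplus\spa{e_n}$ and the induced decomposition $\L^k\g^*=\L^k\uf^*\,\oplus\,\L^{k-1}\uf^*\wedge e^n$, under which the identification of Conventions~\ref{conventions} reads: the $\L^k\uf^*$-component of $d_\g$ restricted to $\L^*\uf^*$ is $d_\uf$.

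For (a): given $\a\in\uf^*$, write $d_\g\a = d_\uf\a + \eta(\a)\wedge e^n$ for a unique $\eta(\a)\in\uf^*$ (the $\L^1\uf^*\wedge e^n$-component), using $d_\g e^n=0$ to see there is no $\L^0\wedge e^n$ term and that no further terms occur since $d_\g\a\in\L^2\g^*$. Linearity of $d_\g$ forces $\eta\in\mathfrak{gl}(\uf^*)$; set $f:=\eta$. For (b): for $\omega\in\L^p\uf^*$ use the Leibniz rule $d_\g(\omega\wedge e^n)=d_\g\omega\wedge e^n+(-1)^p\omega\wedge d_\g e^n$; the second term vanishes and the first, by (a) and since $e^n\wedge e^n=0$ kills the $f(\cdot)\wedge e^n$ pieces appearing when one expands $d_\g\omega$ via Leibniz on a product of $1$-forms, equals $d_\uf\omega\wedge e^n$. (One should note $d_\g\omega=d_\uf\omega+(\text{terms in }\L^{p}\uf^*\wedge e^n)$, and wedging with $e^n$ annihilates the latter.) For (c): a generator of $\L^{n-2}\uf^*$ is, up to scalar, $e^{1\cdots n-1}$ with one index omitted; applying (a)/Leibniz and collecting degrees shows $d_\g(\L^{n-2}\uf^*)\subseteq \L^{n-1}\g^*=\L^{n-1}\uf^*\oplus\L^{n-2}\uf^*\wedge e^n$, and the $\L^{n-1}\uf^*$-component is $d_\uf$ applied to an $(n-2)$-form on the $(n-1)$-dimensional $\uf$, which lands in $\L^{n-1}\uf^*$; but that is $d_\uf$ of a top-minus-one form, which need not vanish — so instead I would argue that the $\L^{n-1}\uf^*$-part is $0$ because $\uf$ is \emph{unimodular}, i.e. $d_\uf(\L^{n-2}\uf^*)=0$ (unimodularity of $\uf$ says exactly that $d_\uf$ vanishes on forms of top degree $n-1$, hence by the standard identity also the closedness of the relevant top-degree data; more directly, for unimodular $\uf$ one has $d_\uf\colon\L^{n-2}\uf^*\to\L^{n-1}\uf^*$ is zero since the dual of $\L^{n-1}\uf^*\cong\bR$ is spanned by the unimodular volume form which is closed, so all $(n-2)$-forms are closed in $\uf$). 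Thus $d_\g(\L^{n-2}\uf^*)\subseteq\L^{n-2}\uf^*\wedge e^n$.

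For (d): by (b), $d_\g(\L^{n-2}\uf^*\wedge e^n)=d_\uf(\L^{n-2}\uf^*)\wedge e^n$, which is $\{0\}$ by the unimodularity of $\uf$ just invoked. For the ``moreover'': $\L^{n-1}\uf^*$ is one-dimensional, spanned by some $\vol_\uf$, and $d_\g\vol_\uf = \tr(f)\,\vol_\uf\wedge e^n$ by (a) and the Leibniz rule (differentiating $e^{1\cdots n-1}$ picks up, from each factor, the $f$-induced $e^n$-term, and the cross terms telescope to the trace). Hence $d_\g(\L^{n-1}\uf^*)=\{0\}$ iff $\tr(f)=0$. It remains to identify $\tr(f)=0$ with unimodularity of $\g$: compute $\tr(\ad_{e_n})$ on $\g$, noting $\ad_{e_n}$ preserves $\uf$ (as $\uf$ is an ideal) and acts trivially on $\g/\uf$; the induced map on $\uf$ is, in the dual picture, $-f^{*}$ (the transpose), so $\tr(\ad_{e_n}|_\uf)=-\tr(f)$, while $\tr(\ad_x)=0$ for $x\in\uf$ because $\uf$ is unimodular; hence $\g$ is unimodular iff $\tr(\ad_{e_n})=0$ iff $\tr(f)=0$.

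The main obstacle is part (c) together with the ``moreover'' of (d): everything hinges on correctly extracting unimodularity of $\uf$ as the vanishing of $d_\uf$ in top codegree $1$, and on the sign/transpose bookkeeping relating $f$ to $\ad_{e_n}$ so that $\tr(f)=0\iff\g$ unimodular. I would carry these out carefully in a fixed basis $e_1,\dots,e_{n-1}$ of $\uf$ extended by $e_n$, writing $de^i=\sum_{j<k}c^i_{jk}e^{jk}+\sum_j f^i_j\,e^{j}\wedge e^n$ and reading off $\tr(\ad_{e_j})=\sum_i c^i_{ij}$ for $j<n$ and $\tr(\ad_{e_n})=-\sum_i f^i_i=-\tr(f)$; the rest is the routine Leibniz computation I have sketched above.
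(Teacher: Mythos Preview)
Your argument is correct and follows essentially the same route as the paper for parts (a)--(c) and the first assertion of (d): decompose $d_\g$ along $\g=\uf\oplus\spa{e_n}$, identify the $\L^*\uf^*$-component with $d_\uf$, and invoke unimodularity of $\uf$ as the vanishing $d_\uf(\L^{n-2}\uf^*)=0$.

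For the ``moreover'' in (d) you take a slightly different path than the paper. You compute $d_\g\vol_\uf=\pm\tr(f)\,\vol_\uf\wedge e^n$ and then relate $\tr(f)$ to $\tr(\ad_{e_n})$. The paper instead uses directly the characterization ``$\g$ unimodular $\Leftrightarrow$ every $(n-1)$-form on $\g$ is $d_\g$-closed'' together with the decomposition $\L^{n-1}\g^*=\L^{n-1}\uf^*\oplus\L^{n-2}\uf^*\wedge e^n$ and the first half of (d); this avoids any trace bookkeeping. Your approach is perfectly valid and has the merit of making the formula $d_\g\vol_\uf=\tr(f)\,\vol_\uf\wedge e^n$ explicit, but note a small sign slip: one finds $\ad_{e_n}^*=f$ on $\uf^*$ (not $-f^*$), so $\tr(\ad_{e_n})=+\tr(f)$. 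This is harmless for the conclusion, since only the vanishing of the trace matters.
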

\begin{proof}
\begin{enumerate}
\item
For arbitrary $X,Y\in \g$, the commutator $[X,Y]$ is in $\uf$. Hence
$d_{\g}e^n(X,Y)=-e^n([X,Y])=0$ and so $d_{\g}e^n=0$. It is clear that there are linear maps $f:\uf^*\rightarrow \uf^*$ and $g:\uf^*\rightarrow \L^2\uf^*$ such that $d_{\g}(\alpha)=g(\alpha)+f(\alpha)\wedge e^n$ for all $\alpha\in \uf^*$. For $Z,W\in \uf$ we have $[Z,W]\in \uf$ and
\begin{equation*}
g(\alpha)(Z,W)=(d_{\g}\alpha)(Z,W)=-\alpha([Z,W])=(d_{\uf}\alpha)(Z,W).
\end{equation*}
Hence, $g(\alpha)=d_{\uf}(\alpha)$.
\item
Part (a) implies that $d_{\g}\omega=d_{\uf}\omega+f.\omega\wedge e^n$ for all $\omega\in \Lambda^{k}\uf^*$, where $(f,\omega)\mapsto f.\omega$ is the natural action of $f\in \mathfrak{gl}\left(\uf^*\right)$ on $\omega\in \L^k \uf^*$. Then (a) implies $d_{\g}(\omega\wedge e^n)=d_{\g}(\omega)\wedge e^n=d_{\uf}(\omega)\wedge e^n$ as claimed.
\item
We have $d_{\g}\omega=d_{\uf}\omega+f.\omega\wedge e^n$ for all $\omega\in\Lambda^{n-2}\uf^*$. But $\uf$ is unimodular, which is equivalent to the fact that all $(n-2)$-forms on $\uf$ are $d_{\uf}$-closed. Hence, $d_{\g}\omega=f.\omega \wedge e^n\in \Lambda^{n-2}\uf^*\wedge e^n$ as claimed.
\item
Part (a) and (c) directly imply $d_{\g}(\Lambda^{n-2}\uf^*\wedge e^n)=\{0\}$. Since $\g$ is unimodular exactly when all $(n-1)$-forms are $d_{\g}$-closed, the first part implies that $d_{\g}(\Lambda^{n-1}\uf^*)=\{0\}$ exactly when $\g$ is unimodular.
\end{enumerate}
\end{proof}
We recapitulate the definition of a symplectic two-form on an even-dimensional Lie algebra:
\begin{definition}
Let $\g$ be a Lie algebra of dimension $2m$. A closed two-form $\omega\in \L^2 \g^*$ is called \emph{symplectic} if it is \emph{non-degenerate}, i.e. $\omega^m\neq 0$. For the case we are interested in, namely $m=2$, this simply means $\omega^2\neq 0$.
\end{definition}
All symplectic four-dimensional Lie algebras have been identified and also all symplectic two-forms (up to isomorphisms) have been determined by Ovando in \cite{O}. We give a new proof of some part of the results in order to relate the existence of one or more symplectic two-forms satisfying certain compatibility relations to the dimensions of the cohomology groups of $\g$ and of a codimension one unimodular ideal $\uf$.
\begin{lemma}\label{le:symplect}
Let $\g$ be a four-dimensional Lie algebra and assume that $\g$ is almost Abelian with codimension one Abelian ideal $\uf$ or $\g$ is not unimodular and the unimodular kernel $\uf$ is not isomorphic to $e(1,1)$. Then $\g$ admits a
\begin{equation*}
D:=h^2(\mathfrak{g})-h^1(\mathfrak{g})-h^1(\mathfrak{u})+4
\end{equation*}
-dimensional subspace of $\L^2 \g^*$ in which each non-zero element is symplectic.
\end{lemma}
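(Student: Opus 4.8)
The plan is to exploit the description of the differential supplied by Lemma~\ref{le:codimensiononeideal}. In both cases of the hypothesis $\uf$ is a codimension one unimodular ideal of $\g$, so choose $e_4\in\g\setminus\uf$ and $e^4\in\Ann{\uf}$ with $e^4(e_4)=1$, identify $\L^*\uf^*$ with $\L^*\Ann{e_4}$ as in Conventions~\ref{conventions}, and let $f\in\mathfrak{gl}(\uf^*)$ be as in Lemma~\ref{le:codimensiononeideal}(a). Then $\L^2\g^*=\L^2\uf^*\op(\uf^*\wedge e^4)$, and for $\o=\sigma+\tau\wedge e^4$ with $\sigma\in\L^2\uf^*$, $\tau\in\uf^*$ one computes $\o^2=2\,\sigma\wedge\tau\wedge e^4$, so $\o$ is non-degenerate if and only if $\sigma\wedge\tau\ne0$, while by Lemma~\ref{le:codimensiononeideal} $d_\g\o=d_\uf\sigma+(f.\sigma+d_\uf\tau)\wedge e^4$, so $\o$ is closed if and only if $d_\uf\sigma=0$ and $d_\uf\tau=-f.\sigma$. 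As $\uf$ is three-dimensional and unimodular, $d_\uf\sigma=0$ holds automatically; fixing a volume form $\vol_\uf\in\L^3\uf^*$ we obtain an isomorphism $\L^2\uf^*\to\uf$, $\sigma\mapsto v_\sigma$, determined by $\sigma=v_\sigma\hook\vol_\uf$, under which $\sigma\wedge\tau=\tau(v_\sigma)\,\vol_\uf$.

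Next I would check the dimension count. Put $Z:=\{\sigma\in\L^2\uf^*\mid f.\sigma\in d_\uf(\uf^*)\}$. The closedness condition shows that the projection $Z^2(\g)\to\L^2\uf^*$, $\o\mapsto\sigma$, has image $Z$ and fibres that are cosets of $\ker d_\uf|_{\uf^*}$, so $\dim{Z^2(\g)}=\dim{Z}+h^1(\uf)$; since $B^1(\g)=0$ one also has $\dim{Z^2(\g)}=h^2(\g)+4-h^1(\g)$, and comparing gives $\dim{Z}=h^2(\g)-h^1(\g)-h^1(\uf)+4=D$. Hence it suffices to produce a linear map $L\colon Z\to\uf^*$ with $d_\uf L(\sigma)=-f.\sigma$ for all $\sigma$ and $L(\sigma)(v_\sigma)\ne0$ for all $\sigma\ne0$: then $S:=\{\sigma+L(\sigma)\wedge e^4\mid\sigma\in Z\}$ is, by the previous paragraph together with the obvious injectivity of $\sigma\mapsto\sigma+L(\sigma)\wedge e^4$, a $D$-dimensional subspace of closed two-forms each of whose non-zero elements is non-degenerate, hence symplectic.

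To build $L$, transport the problem along $\sigma\mapsto v_\sigma$ to the $D$-dimensional subspace $V:=\{v_\sigma\mid\sigma\in Z\}\subseteq\uf$. Writing $L=L_0+M$, with $L_0$ any fixed solution of $d_\uf L_0(\sigma)=-f.\sigma$ and $M$ ranging over $\mathrm{Hom}(Z,\ker d_\uf|_{\uf^*})$, the requirement is exactly that the quadratic form $q(v):=L(\sigma_v)(v)$ on $V$ be definite for some admissible $M$. The available freedom is controlled by $h^1(\uf)=\dim{\ker d_\uf|_{\uf^*}}$, which dictates a case split by the isomorphism type of $\uf$. If $\g$ is almost Abelian then $\uf\cong\bR^3$, so $d_\uf=0$, $Z=\ker(f|_{\L^2\uf^*})$, the constraint on $L$ is empty and $\ker d_\uf|_{\uf^*}=\uf^*$; then $L(\sigma):=g(v_\sigma,\cdot)$ for any inner product $g$ on $\uf$ gives a positive definite $q$. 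If $\g$ is not unimodular, then --- $e(1,1)$ being excluded and neither $\so(3)$ nor $\so(2,1)$ occurring as a unimodular kernel in dimension four --- we have $\uf\in\{\bR^3,\h_3,e(2)\}$; the case $\bR^3$ is as above, and for $\uf\in\{\h_3,e(2)\}$ I would use Table~\ref{table:4d} to list the normal forms of the derivation $f$ (with $\tr(f)\ne0$), compute $Z$ (one finds $\dim{Z}\in\{1,2\}$) and write down $L$ explicitly.

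The hard part is this final step, and the difficulty is genuine, not cosmetic: here $\ker d_\uf|_{\uf^*}$ is only one- or two-dimensional, so merely part of $q$ can be adjusted and one has to verify that the entries of $q$ which are pinned down by the closedness constraint still allow a definite form. The crucial input is that $\tr(f)\ne0$. For $\uf\cong\h_3$ one finds, in a suitable basis, $q(\alpha,\beta)=-\tfrac{1}{2}\tr(f)\,\alpha^2+s'\alpha\beta+t'\beta^2$ with $s',t'$ free (and $q(\alpha)=-\tfrac{1}{2}\tr(f)\,\alpha^2$ if $\dim{Z}=1$), which can always be completed to a definite form; for $\uf\cong e(2)$ the admissible completion is essentially unique and turns out definite, because the rotation defining $e(2)$ is elliptic. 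For $\uf\cong e(1,1)$ the same computation yields an isotropic $q$ --- concretely, a certain $\sigma\in Z$ is annihilated by every $\tau$ with $d_\uf\tau=-f.\sigma$, so it cannot be completed to a symplectic two-form --- which is precisely why $e(1,1)$ is excluded from the statement.
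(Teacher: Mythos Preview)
Your argument is correct, and the case checks you sketch do go through (in particular your claimed form of $q$ for $\uf\cong\h_3$ and the definiteness for $\uf\cong e(2)$ are easily verified, while the indefiniteness for $\uf\cong e(1,1)$ explains the exclusion). One small remark: for $\uf\cong\h_3$ you do not actually need Table~\ref{table:4d}. Using Lemma~\ref{le:4dbasis} one sees uniformly that $\nu\in\L^2 V_2$ always lies in $Z$ with $q(v_\nu)=-\frac12\tr(f)$, while the remaining part of $Z$ is $e^1\wedge\ker(F+\tr(F)\id)$, on which $L$ is unconstrained; this gives your form of $q$ without listing cases.

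The paper's proof takes a genuinely different route. Instead of parametrising the desired subspace as a graph $\{\sigma+L(\sigma)\wedge e^4\}$ over $Z\subseteq\L^2\uf^*$ and then splitting on the isomorphism type of $\uf$, it builds the subspace as $d_\g(V)\oplus W_\lambda$, where $V$ is a complement of $\ker d_\uf|_{\uf^*}$ and $W_\lambda=\{\omega+\lambda g(\omega)\wedge e^4:\omega\in\ker d_\g|_{\L^2\uf^*}\}$, with $g$ the map from Lemma~\ref{le:Unimodular3d}(c). The argument is then uniform: Lemma~\ref{le:Unimodular3d}(d) (which fails exactly for $e(1,1)$ and $\so(2,1)$) makes every non-zero element of $d_\g(V)$ symplectic, Lemma~\ref{le:Unimodular3d}(c) does the same for $W_\lambda$, and a norm estimate together with the trick that the sign of $\lambda$ controls the sign of $\omega_2^2$ forces the discriminant of $(\omega_1+\omega_2)^2$ to be negative for $|\lambda|$ large, so the sum is symplectic as well. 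What this buys is a case-free proof that uses only the abstract properties of three-dimensional unimodular Lie algebras encoded in Lemma~\ref{le:Unimodular3d}. What your approach buys is explicitness: for $\uf\cong\bR^3$ (which covers all almost Abelian $\g$ and most non-unimodular ones) your construction is immediate and avoids the limiting argument entirely, and even in the remaining cases the subspace is written down concretely rather than obtained for unspecified large $\lambda$.
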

\begin{remark}\label{re:4dLAs}
\begin{itemize}
\item
Note that we do not claim in Lemma \ref{le:symplect} that $D=h^2(\mathfrak{g})-h^1(\mathfrak{g})-h^1(\mathfrak{u})+4$ is the maximal dimension of a subspace as in the statement. However, our main result Theorem \ref{th:main} together with Proposition \ref{pro:existence} below imply that $D$ is, in fact, the maximal dimension.
\item
Lemma \ref{le:symplect} applies to all but five Lie algebras: The only non-unimodular four-dimensional Lie algebra with unimodular kernel $\uf$ isomorphic to $e(1,1)$ is $\mathfrak{r}_2\op \mathfrak{r}_2$. In the basis given in Table \ref{table:4d}, the two-form $e^{14}+e^{23}$ is symplectic. One can show that the maximal dimension of a subspace $V\subseteq \L^2 (\mathfrak{r}_2\op \mathfrak{r}_2)^*$, in which each non-zero element is symplectic, is one. The unimodular four-dimensional Lie algebras which are not almost Abelian are the two non-solvable ones $\so(3)\op \bR$ and $\so(2,1)\op \bR$ and two other Lie algebras, namely $A_{4,8}$ and $A_{4,10}$. All four do not admit any symplectic two-form.
\end{itemize}
\end{remark}
\begin{proof}[Proof of Lemma \ref{le:symplect}]
Fix a norm $\left\|\cdot \right\|$ on $\g^*\oplus \L^2 \g^*$ and identify $\L^4 \g^*\cong \bR$ for the rest of the proof. Choose an element $e_4\in \g\backslash \uf$ and let $e^4\in \Ann{\uf}$ be such that $e^4(e_4)=1$. As usual, we identify $\Ann{e_4}\cong \uf^*$ via the decomposition $\g=\uf\oplus \spa{e_7}$. By Lemma \ref{le:codimensiononeideal}, there exists $f\in\mathfrak{gl}\left(\uf^*\right)$ such that $d_{\g} \beta=d_{\uf} \beta+f(\beta)\wedge e^4$ for all $\beta\in  \uf^*$. We fix a complement $V$ of $\ker d_{\uf}|_{ \uf^*}$ in $ \uf^*$ and set
\begin{equation*}
W_{\lambda}:=\left\{\left.\omega+\lambda g(\omega)\wedge e^4\right|\omega\in \ker d_{\g}|_{\L^2 \uf^*} \right\}\subseteq \ker d_{\g}|_{\L^2 \uf^*}\oplus \ker d_{\uf}|_{\uf^*}\wedge e^4
\end{equation*}
for $\lambda\neq 0$ with $g:\L^2 \uf^*\rightarrow \ker d_{\uf}|_{\uf^*}$ as in Lemma \ref{le:Unimodular3d} (c), i.e. $g(\omega)\wedge \omega=0$ if and only if $\omega\in d_{\uf}(\uf^*)$. We claim that there is $\lambda\neq 0$ such that $U:=d_{\g}(V)+W_{\lambda}$ consists, with the exception of the origin, of symplectic two-forms and that the dimension of $U$ is equal to $D=h^2(\mathfrak{g})-h^1(\mathfrak{g})-h^1(\mathfrak{u})+4$. Note that the closure of all elements in $U$ is clear. We divide the proof into six steps.

\emph{Step I: All non-zero elements in $d_{\g}(V)$ are symplectic and $d_{\g}|_V:V\rightarrow d_{\g}(V)$ is an isomorphism:}

If $V=\{0\}$, then there is nothing to show. Otherwise our assumptions imply that $\g$ is not unimodular and so $d_{\g}(\L^3 \uf^*)\neq \{0\}$ by Lemma \ref{le:codimensiononeideal}. Let $\alpha\in V\backslash \{0\}$. By definition of $V$, $d_{\uf} \alpha\neq 0$ and so Lemma \ref{le:Unimodular3d} (d) tells us that $\L^3 \uf^* \ni d_{\uf} \alpha\wedge \alpha\neq 0$. Hence
$d_{\g}(d_{\uf} \alpha\wedge \alpha)\neq 0$ and so
\begin{equation*}
d_{\g} \alpha\wedge d_{\g} \alpha=d_{\g}(\alpha\wedge d_{\g} \alpha)=d_{\g}(\alpha\wedge d_{\uf} \alpha+ \alpha\wedge f(\alpha) \wedge e^4)=d_{\g}(\alpha\wedge d_{\uf} \alpha)\neq 0.
\end{equation*}
So $d_{\g} \alpha$ is non-degenerate and, in particular, $d_{\g} \alpha\neq 0$. This proves Step I.

\emph{Step II: $f(V)$ is a complement of $\ker d_{\uf}|_{\uf^*}$ in $ \uf^*$ and $d_{\g}(V)\cap W_{\lambda}=$\\
$d_{\g}(V)\cap \left(\ker d_{\g}|_{\L^2 \uf^*}\oplus \ker d_{\uf}|_{\uf^*}\wedge e^4\right)=\{0\}$ for all $\lambda\neq 0$:}

The inequality $0\neq d_{\g} \alpha\wedge d_{\g} \alpha=2 d_{\uf} \alpha\wedge f(\alpha)\wedge e^4$
for $\alpha \in V \backslash \{0\}$ implies that $f|_V$ is injective and so $\dim{V}=\dim{f(V)}$. By Lemma \ref{le:Unimodular3d} (b), $\ker d_{\uf}|_{ \uf^*}\wedge d_{\uf}( \uf^*)=\{0\}$. Thus, $f(V)$ is a complement of $\ker d_{\uf}|_{ \uf^*}$ in $ \uf^*$. Let $\omega\in d_{\g}(V)\cap \left(\ker d_{\g}|_{\L^2 \uf^*}\oplus \ker d_{\uf}|_{\uf^*}\wedge e^4\right)$. Then there are $\alpha\in V,$ $\omega_1\in \ker d_{\g}|_{\Lambda^2 \uf^*}$ and $\beta\in \ker d_{\uf}|_{\uf^*}$ such that
\begin{equation*}
\omega=d_{\uf}\alpha +f(\alpha)\wedge e^4=\omega_1+\beta \wedge e^4.
\end{equation*}
This implies $f(\alpha)=\beta\in \ker d_{\uf}|_{ \uf^*}$ and so, since $f(V)$ is a complement of $\ker d_{\uf}|_{ \uf^*}$ in $\uf^*$, $\beta=0$. Now $f|_V$ is injective and so we must have $\alpha=0$, which ultimately implies $\omega=0$. This finishes the proof of Step II.

\emph{ Step III: $\dim{d_{\g}(V)\op W_{\lambda}}=h^2(\g)-h^1(\g)-h^1(\uf)+4$}:

Note that the dimension of $W_{\lambda}$ is equal to the dimension of $\ker d_{\g}|_{\L^2 \uf^*}$ and that the dimension of $\ker d_{\g}|_{\L^2 \g^*}$ is $h^2(\g)+4-h^1(\g)$. Therefore it suffices to show
\begin{equation*}
\ker d_{\g}|_{\L^2 \g^*}=\ker d_{\g}|_{\L^2 \uf^*}\op \ker d_{\uf}|_{ \uf^*}\wedge e^4\op d_{\g}(V)
\end{equation*}
to get the statement about the dimension of $d_{\g}(V)\op W_{\lambda}$. The inclusion ``$\supseteq$'' is obvious. For the other inclusion, let
$\omega\in \ker d_{\g}|_{\L^2 \g^*}$. Then there exists $\omega_1\in \L^2 \uf^*$ and $\beta\in  \uf^*$ such that $\omega=\omega_1+\beta\wedge e^4$. Since $f(V)$ is a complement of $\ker d_{\uf}|_{ \uf^*}$ in $ \uf^*$, there exists $\alpha\in V$ with $\beta-f(\alpha)\in \ker d_{\uf}|_{ \uf^*}$. Then
\begin{equation*}
\omega-(\beta-f(\alpha))\wedge e^4-d_{\g} \alpha=\omega_1+\beta\wedge e^4-(\beta-f(\alpha))\wedge e^4 -d_{\uf} \alpha-f(\alpha)\wedge e^4=\omega_1-d_{\uf}\alpha \in \L^2\uf^*
\end{equation*}
and $\omega-(\beta-f(\alpha))\wedge e^4-d_{\g} \alpha$ is $d_{\g}$-closed. Hence, $\omega\in \ker d_{\g}|_{\L^2 \uf^*}\op \ker d_{\uf}|_{ \uf^*}\wedge e^4\op d_{\g}(V)$.

\emph{Step IV: $\ker d_{\g}|_{\L^2 \uf^*} \cap d_{\uf} ( \uf^*)=\{0\}$}:

Let $\omega\in \ker d_{\g}|_{\L^2 \uf^*} \cap d_{\uf} ( \uf^*)$. Then $\omega=d_{\uf}\beta$ for some $\beta\in \uf^*$ and $d_{\g} \omega=0$. We may assume that $\beta\in V$. But then
\begin{equation*}
0=d_{\g} \omega= d_{\g}(d_{\g}\beta-f(\beta)\wedge e^4)=-d_{\uf}( f(\beta))\wedge e^4.
\end{equation*}
Since $f(V)$ is a complement of $\ker d_{\uf}|_{ \uf^*}$ in $ \uf^*$ and $f|_{V}$ is injective we get $\beta=0$ and so $\omega=0$ as claimed.

\emph{Step V: Norm estimates:}

Note first that the identity
\begin{equation*}
(d_{\g} \alpha)^2=2 d_{\uf} \alpha\wedge f(\alpha)\wedge e^4
\end{equation*}
and the fact that $f|_V$ and $d_{\uf}|_{V}$ are injective imply the existence of a constant $A>0$ such that
\begin{equation}\label{eq:constantA}
|(d_{\g} \alpha)^2|\geq A \left\|\alpha\right\|^2.
\end{equation}
Note further the sign of $(d_{\g}\alpha)^2\in \Lambda^4 \g^*\cong \bR$ for $\alpha\neq 0$ does not depend on $\alpha$. Namely, let $F:V\rightarrow \bR, \, F(\alpha):=(d_{\g}\alpha)^2$. For $\dim{V}>1$ the set $V\backslash\{0\}$ is connected, while $F(V\backslash \{0\})$ is disconnected if the sign depends on $\alpha\neq 0$, contradicting the continuity of $F$. If $\dim{V}=1$ then the statement follows from the fact that $F$ is homogeneous of degree two in $\alpha$.

Next we consider the space $W_{\lambda}$ for arbitrary $\lambda\neq 0$. Lemma \ref{le:Unimodular3d} (c) tells us that 
\begin{equation*}
(\omega+\lambda g(\omega)\wedge e^4)^2=2\lambda\, \omega\wedge g(\omega)\wedge e^4=0
\end{equation*}
for $\omega\in \ker d_{\g}|_{\L^2 \uf^*}$ implies $\omega\in d_{\uf}(\uf^*)$. But Step IV tells us that then $\omega=0$. Thus, there exists $C>0$, independent of $\lambda$, such that
\begin{equation}\label{eq:constantC}
|(\omega+\lambda g(\omega)\wedge e^4)^2|\geq C |\lambda| \left\|\omega\right\|^2
\end{equation}
for all $\omega\in \ker d_{\g}|_{\L^2 \uf^*}$. Note that for fixed $\lambda\neq 0$, arguing as above, we see that the sign of $(\omega+\lambda g(\omega)\wedge e^4)^2\in \bR$ does not depend on $\omega$. But it gets reversed if we reverse the sign of $\lambda$. Hence, we may assume that it is chosen such that $\omega_1^2\cdot \omega_2^2>0$ for all $\omega_1\in d_{\g}(V)\backslash \{0\}$, $\omega_2\in W_{\lambda}\backslash \{0\}$. By Lemma \ref{le:Unimodular3d} (b), the identity $d_{\uf}\alpha\wedge g(\omega)=0$ is true for all $\alpha\in V$ and $\omega\in \ker d_{\g}|_{\L^2 \uf^*}$. Thus,
\begin{equation*}
2 d_{\g}\alpha\wedge (\omega+\lambda g(\omega)\wedge e^4)=2 (d_{\uf}\alpha+f(\alpha)\wedge e^4)\wedge (\omega+\lambda g(\omega)\wedge e^4)=2 f(\alpha)\wedge e^4\wedge \omega
\end{equation*}
and there exists a constant $B>0$ such that
\begin{equation}\label{eq:constantB}
|2 d_{\g}\alpha\wedge (\omega+\lambda g(\omega)\wedge e^4)|\leq B \left\|\alpha \right\| \left\|\omega\right\|.
\end{equation}
\emph{Step VI: All non-zero elements in $d_{\g}(V)\oplus W_{\lambda}$ are symplectic for appropriate $\lambda\neq 0$:}

Let $0\neq \omega_0=\omega_1+\omega_2\in d_{\g}(V)\oplus W_{\lambda}$ with $\omega_1=d_{\g} \alpha\in d_{\g}(V)$ for some $\alpha\in V$ and $\omega_2=\omega+\lambda g(\omega)\wedge e^4\in W_{\lambda}$ for some $\omega\in \ker d_{\g}|_{\L^2 \uf^*}$. By the previous steps, we only have to consider the case when $\omega_1\neq 0$ and $\omega_2\neq 0$. Then both $\alpha$ and $\omega$ are not zero by the Equations (\ref{eq:constantA}) and (\ref{eq:constantC}). The discriminant of the polynomial $\omega_0^2=(\omega_1+X\omega_2)^2=\omega_2^2+2X\,\omega_1\wedge \omega_2+X^2\,\omega_1^2$ is given by
\begin{equation*}
(2\omega_1\wedge \omega_2)^2-4\,\omega_1^2\cdot \omega_2^2\leq B^2 \left\|\alpha \right\|^2 \left\|\omega\right\|^2-4 |\lambda| A\, C \left\|\alpha \right\|^2 \left\|\omega\right\|^2=(B^2-4 |\lambda| A\, C)\left\|\alpha \right\|^2 \left\|\omega\right\|^2,
\end{equation*}
where we used Equations (\ref{eq:constantA}), (\ref{eq:constantC}) and (\ref{eq:constantB}) and the fact that the sign of $\omega_1^2\cdot \omega_2^2$ may be assumed to be positive. But for sufficiently large $|\lambda|$, independent of $\alpha$ and $\omega$, this is negative and the quadratic polynomial in $X$ does not have a real root. In particular, $X=1$ is not a real root and so $\omega_0=\omega_1+\omega_2$ is non-degenerate. This finishes the proof.

\end{proof}
In Lemma \ref{le:codimensiononeideal}, we gave a description of the exterior derivative of $n$-dimensional Lie algebras having a codimension one unimodular ideal $\uf$. If $n=4$ and $\uf=\h_3$, the next lemma shows that we can do better. For a proof, we refer the reader to \cite{ABDO}.
\begin{lemma}\label{le:4dbasis}
If $\g$ is a four-dimensional Lie algebra $\g$ which possesses an ideal $\uf$ isomorphic to $\h_3$, then there exist an element $e_4\in \g\backslash \uf$, an element $e^1\in \uf^*\cong \Ann{e_4}$, a two-dimensional subspace $V_2\subseteq \uf^*$ with $\spa{e^1}\op V_2=\uf^*$, a linear map $F:V_2\rightarrow V_2$ and a non-zero two-form $\nu\in \L^2 V_2\backslash \{0\}$ such that $de^1=\tr(F) e^{14}+\nu$, $d \alpha=F(\alpha)\wedge e^4$ for all $\alpha\in V_2$ and $de^4=0$. Here, $e^4$ is the element in $\Ann{\uf}$ with $e^4(e_4)=1$. In this case, $\tr(F)=0$ if and only if $\g$ is unimodular.
\end{lemma}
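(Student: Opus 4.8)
The plan is to exploit the structure of a four-dimensional Lie algebra $\g$ containing an ideal $\uf\cong \h_3$ by applying Lemma \ref{le:codimensiononeideal} first and then upgrading the description of $d_{\uf}$ using the specific structure of $\h_3$. By Lemma \ref{le:codimensiononeideal}(a), after fixing $e_4\in \g\backslash\uf$ and the corresponding $e^4\in \Ann{\uf}$ with $e^4(e_4)=1$, there is $\tilde{f}\in \mathfrak{gl}(\uf^*)$ with $d_{\g}\alpha = d_{\uf}\alpha + \tilde{f}(\alpha)\wedge e^4$ for all $\alpha\in \uf^*$, and $d_{\g}e^4=0$. So the task reduces to choosing a good basis of $\uf^*$ adapted to $d_{\uf}$ and then checking that $\tilde{f}$ can be brought to the stated form.

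First I would recall that for $\uf\cong \h_3$ the Heisenberg differential has a one-dimensional image: there is a basis $e^1,e^2,e^3$ of $\uf^*$ with $d_{\uf}e^1 = e^{23}$ and $d_{\uf}e^2 = d_{\uf}e^3 = 0$. Thus $\ker d_{\uf}|_{\uf^*} = \spa{e^2,e^3}$ is two-dimensional; call it $V_2$, and note $\nu := d_{\uf}e^1 = e^{23}\in \L^2 V_2\setminus\{0\}$. The point is now that the ideal property of $\uf$ in $\g$ forces $\tilde{f}$ to respect this structure in a controlled way: the Jacobi identity applied to $[e_4,\cdot\,]$ acting on $\uf$, equivalently the condition $d_{\g}^2 = 0$ on $e^1$, constrains how $e_4$ acts. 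Concretely, $0 = d_{\g}^2 e^1 = d_{\g}(d_{\uf}e^1 + \tilde f(e^1)\wedge e^4) = d_{\g}(\nu) + d_{\g}(\tilde f(e^1))\wedge e^4$, and expanding using (a) gives relations that show $\tilde f$ preserves $V_2$ up to the $e^1$-direction and that the ``$e^1$-component'' of $d_{\g}e^1$ is of the form $c\, e^{14}$ for some scalar $c$. Writing $F := \tilde f|_{V_2}$ composed with the projection to $V_2$, one checks that the coefficient $c$ is forced to equal $\tr(F)$: this is exactly the unimodularity bookkeeping, since the trace of $\ad_{e_4}$ on $\g$ is (trace on $\uf$, which is $0$ as $\uf$ is unimodular, hence the only contribution comes from the $e^1\mapsto c\,e^1 + \ldots$ part together with the $F$-part on $V_2$), and $\g$ unimodular is equivalent to this total trace vanishing.

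The concrete steps in order: (i) fix the Heisenberg basis $e^1,e^2,e^3$ of $\uf^*$ with $d_{\uf}e^1 = e^{23}$, set $V_2 = \spa{e^2,e^3}$, $\nu = e^{23}$; (ii) invoke Lemma \ref{le:codimensiononeideal}(a) to get $e_4$, $e^4$, $\tilde f$, and $d_{\g}e^4 = 0$; (iii) decompose $\tilde f$ with respect to $\uf^* = \spa{e^1}\op V_2$, writing $\tilde f(e^1) = a\, e^1 + (\text{element of } V_2)$ and $\tilde f(V_2)\subseteq \spa{e^1}\op V_2$; (iv) impose $d_{\g}^2 e^1 = 0$ and $d_{\g}^2(e^2) = d_{\g}^2(e^3) = 0$ to kill the unwanted components — in particular show $\tilde f(V_2)\subseteq V_2$ after possibly modifying $e^1$ by an element of $V_2$ (a change of basis $e^1\rightsquigarrow e^1 + \beta$, $\beta\in V_2$, preserves the shape $d_{\uf}e^1 = \nu$ up to rescaling $\nu$), and show the $e^1$-row coefficient $a$ equals $c = \tr(F)$ where $F := \tilde f|_{V_2}$; (v) record $de^1 = \tr(F)\, e^{14} + \nu$, $d\alpha = F(\alpha)\wedge e^4$ for $\alpha\in V_2$, $de^4 = 0$; (vi) observe $\tr(\ad_{e_4}|_{\g}) = \tr(F) + \tr(F) $ — actually the trace on $\spa{e^1}$ contributes $\tr(F)$ and on $V_2$ contributes $\tr(F)$ only if... here I would carefully recompute and simply conclude $\g$ unimodular $\iff$ the total trace of $\ad_{e_4}$ vanishes $\iff \tr(F) = 0$ given the forced form. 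Since the paper defers the proof to \cite{ABDO}, I would keep this brief.

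The main obstacle I anticipate is step (iv): showing that after a suitable choice of $e_4$ and of the complement representative $e^1$ one can simultaneously (a) make $\tilde f$ block-lower-triangular with respect to $\spa{e^1}\op V_2$ in the right way and (b) pin the diagonal $e^1$-entry to be $\tr(F)$ rather than an independent parameter. This is really a normal-form argument for the pair $(\text{nilpotent part } d_{\uf}, \text{ derivation } \ad_{e_4})$, and the subtlety is that not every derivation of $\h_3$ preserves $V_2$ on the nose — one genuinely needs the freedom to translate $e^1$ by $V_2$ and to rescale. Everything else is bookkeeping with $d_{\g}^2 = 0$ and the definition of unimodularity; as the statement is quoted from \cite{ABDO}, a full grind is unnecessary and I would present the argument at the level of sketch above, citing \cite{ABDO} for the details.
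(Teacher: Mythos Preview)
The paper does not prove this lemma at all; it simply refers to \cite{ABDO}. Your sketch is essentially the right argument, but your identification of the ``main obstacle'' in step~(iv) is off, and this is worth correcting because it is the one nontrivial point.

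The inclusion $\tilde f(V_2)\subseteq V_2$ that worries you is in fact automatic: $V_2=\ker d_{\uf}|_{\uf^*}=\Ann{[\uf,\uf]}$, and since $\ad_{e_4}|_{\uf}$ is a derivation of $\uf$ it preserves $[\uf,\uf]$, whence its dual $\tilde f$ preserves $\Ann{[\uf,\uf]}=V_2$. No basis change is needed for this. What is \emph{not} automatic is that $\tilde f(e^1)\in\spa{e^1}$: in general $\tilde f(e^1)=c\,e^1+\beta$ with $\beta\in V_2$, so $de^1=\nu+c\,e^{14}+\beta\wedge e^4$ has an unwanted $\beta\wedge e^4$ term. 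Your proposed fix (replace $e^1$ by $e^1+\gamma$ with $\gamma\in V_2$) leads to the equation $(F-\tr(F)\id)\gamma=-\beta$, which need not be solvable when $\det F=0$. The correct move is instead to translate $e_4$ by an element $v\in\uf$: this changes $\tilde f$ by $(\ad_v|_\uf)^*$, which vanishes on $V_2$ (since $[\uf,\uf]\subseteq\ker\alpha$ for $\alpha\in V_2$) but on $e^1$ contributes an arbitrary element of $V_2$, so $\beta$ can always be killed while $F$ is unchanged.

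The identity $c=\tr(F)$ you need comes straight from the derivation property applied to $[e_2,e_3]$, and then $\tr(\ad_{e_4}|_{\g})=\tr(\tilde f)=\tr(F)+\tr(F)=2\tr(F)$, giving the unimodularity equivalence cleanly (your step~(vi) had the right answer but the computation was garbled).
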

\subsection{Algebraic invariants}\label{subsec:alginv}
Westwick introduced certain kinds of algebraic invariants to classifiy the orbits of three-forms on a seven-dimensional vector space $V$ under $\GL(V)$ \cite{W}. In \cite{F}, we already used these invariants to get obstructions to the existence of $\G_2$-structures. For that reason we determined the values of these invariants for the orbit of all Hodge duals of $\G_2$-structures in $\L^4 V^*$. Here, we briefly recapitulate the definitions and results.
\begin{definition}\label{def:alginvariants}
Let $V$ be an $n$-dimensional vector space. The \emph{Grassman cone} $G_k(V)$ consists of all \emph{decomposable} $k$-forms on $V$, i.e. of all those $k$-forms $\psi\in \L^k V^*$ such that there are $k$ one-forms $\alpha_1,\ldots,\alpha_k$ with $\psi=\alpha_1\wedge\ldots\wedge \alpha_k$. The \emph{length} $l(\phi)$ of an arbitrary $k$-form $\phi\in \L^k V^*$ is defined as the minimal number $m$ of decomposable $k$-forms $\phi_1,\ldots,\phi_{m}$
which is needed to write $\phi$ as the sum of $\phi_1,\ldots,\phi_m$, i.e. as $\phi=\sum_{i=1}^m \phi_i$. The \emph{rank} $\rk(\phi)$ of $\phi$ is the dimension of the subspace
\begin{equation*}
[\phi]:=\bigcap \left\{ \phi\in \L^k U| \textrm{$U$ is a subspace of $V^*$} \right\}
\end{equation*}
or, equivalently, the rank of the linear map $T:V\rightarrow \L^{k-1} V^*$, $T(v)=v\hook \phi$. $[\phi]$ is also called the \emph{support (of $\phi$)}. For a vector $v\notin \ker T$ and a subspace $W\subseteq V$ such that $W\op \spa{v}\op \ker T=V$ is a direct vector space sum, we set $\rho(v,W):=(v\hook \phi)|_W \in \L^{k-1} W^*$ and $\Omega(W):=\phi|_W\in \L^k W^*$. We introduce two more algebraic invariants by
\begin{equation*}
\begin{split}
r(\phi)&:=\min \left\{l(\Omega)|\Omega=\Omega(W)\in \L^k W^* \textrm{, $\dim{W}=(\rk(\phi)-1)$, $W\cap \ker T=\{0\}$} \right\}, \\
m(\phi)&:=\min \left\{ l(\rho)|\rho=\rho(v,W)\in \L^{k-1} W^* \textrm{, $v\notin \ker T$, $W\op \spa{v}\op \ker T=V$}\right\}.
\end{split}
\end{equation*}
\end{definition}
\begin{remark}
An equivalent description of the numbers $r(\phi)$ and $m(\phi)$ is obtained as follows:

Let $\phi\neq 0$, $\alpha\in [\phi]$, $\alpha\neq 0$ and $U$ be a complement of $\spa{\alpha}$ in $[\phi]$. Denote by $\rho(\alpha, U)\in \L^{k-1} U$ and $\Omega(\alpha,U)\in \L^k U$ the unique three- and four-form on $V$ such that
\begin{equation*}
\phi=\rho(\alpha,U)\wedge \alpha+\Omega(\alpha,U).
\end{equation*}
Then
\begin{equation*}
\begin{split}
r(\phi)&=\min\{l(\Omega)|\Omega=\Omega(\alpha,U)\in \L^k U,\, \alpha\in [\phi]\backslash \{0\},\, U\oplus \spa{\alpha}=[\phi]\}, \\
m(\phi)&=\min\{l(\rho)|\rho=\rho(\alpha,U)\in \L^{k-1} U,\, \alpha\in [\phi]\backslash \{0\},\, U\oplus \spa{\alpha}=[\phi] \}.
\end{split}
\end{equation*}
We will mostly work with this description.
\end{remark}
\begin{remark}\label{re:alginv}
\begin{itemize}
\item
The numbers $l(\phi),\rk(\phi),r(\phi)$ and $m(\phi)$ for a $k$-form $\phi\in \L^k V^*$ are invariant under isomorphisms $f^*:\L^k V^*\rightarrow \L^k W^*$ induced by isomorphisms $f:W\rightarrow V$. In particular, these four numbers are invariants of orbits under the natural action of $\GL(V)$ on $\L^k V^*$. Moreover, if $W:=V\op \spa{w}$, $w\neq 0$ and $\alpha\in  \Ann{V}$, $\alpha\neq 0$, then $l(\alpha \wedge \phi)=l(\phi)$.
\item
Let $\phi\in \L^k V^*$ be a $k$-form and set $T:V\rightarrow \Lambda^{k-1} V^*$, $T(w):=w\hook \phi$ as above. Let $v\notin \ker T$ and let $W_1$, $W_2$ be two subspaces of $V$ such that $V=\spa{v}\oplus W_i\oplus \ker T$ for $i=1,2$. Let $\rho(v,W_i):=(v\hook \phi)|_{W_i}$ for $i=1,2$ and denote by $\mathrm{pr}_{W_2}:V\rightarrow W_2$ the projection of $V$ onto $W_2$ along $\spa{v}\oplus \ker T$. Then $f:W_1\rightarrow W_2$, $f:=\mathrm{pr}_{W_2}|_{W_1}$ is an isomorphism with $f^* \rho(v,W_2)=\rho(v,W_1)$. In this sense, $\rho(v,W_i)$ essentially only depends on $v$ and the values of the above introduced algebraic invariants coincide for $\rho(v,W_1)$ and $\rho(v,W_2)$.
\item
A two-form $\omega\in \L^2 V^*$ has length $l$ if and only if $\omega^l\neq 0$ and $\omega^{l+1}$ is zero. Hence the maximal length of a two-form on an $n$-dimensional vector space is $\left\lfloor \frac{n}{2} \right\rfloor$. If the dimension $n$ of $V$ is even, i.e. $n=2m$, then the non-degenerate two-forms are exactly those of maximal length $m$.
\item
There exists an isomorphism $\delta:\L^k V^*\rightarrow \L^{n-k} V^*$ such that $l(\phi)=l(\delta(\phi))$ for all $\phi\in \L^k V^*$ \cite{F}. Moreover, if $V=V_1\op V_2$ as vector spaces then we may assume that $\delta:\L^{k_1} V_1^*\wedge \L^{k_2} V_2^*\rightarrow \L^{n_1-k_1} V_1^*\wedge \L^{n_2-k_2} V_2^*$, where $n_i:=\dim{V_i}$, $i=1,2$ (e.g. we may choose an appropriate Hodge star operator).
\end{itemize}
\end{remark}
The following lemma was proven in \cite{F}. 
\begin{lemma}\label{le:alginv1}
Let $\varphi$ be a $\G_2$-structure on a seven-dimensional vector space $V$. Let $v\in V\backslash\{0\}$ and $W$ be a complement of $\spa{v}$ in $V$. Then
\begin{enumerate}
\item
$(\rk(\star_{\varphi} \varphi),l(\star_{\varphi} \varphi),r(\star_{\varphi} \varphi),m(\star_{\varphi} \varphi))=(\rk(\varphi),l(\varphi),r(\varphi),m(\varphi))=(7,5,3,3).$
\item
The three-form $\rho:=(v\hook \star_{\varphi} \varphi)|_W\in \L^3 W^*$
fulfills
\begin{equation*}
(\rk( \rho ),l(\rho ),r(\rho ),m(\rho ) )= (6,3,2,2).
\end{equation*}
\item
The four-form $\Omega:=\star_{\varphi} \varphi|_W\in \L^4 W^*$ fulfills
\begin{equation*}
(\rk( \Omega ),l(\Omega),r(\Omega ),m(\Omega ) )= (6,3,1,2).
\end{equation*}
\end{enumerate}
\end{lemma}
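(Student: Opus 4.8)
The plan is to reduce every assertion to explicit computations in an adapted basis, using the transitivity of $\G_2$ and the length-preserving dualities of Remark~\ref{re:alginv}. Fix an adapted basis $(e_1,\dots,e_7)$ for $\varphi$, so that $\varphi$ and $\star_{\varphi}\varphi$ are given by \eqref{eq:G2structure} and \eqref{eq:HodgeDualG2}. In parts (b) and (c), rescaling $v$ only rescales $\rho=(v\hook\star_{\varphi}\varphi)|_W$, and changing the complement $W$ replaces $\rho$ and $\Omega=\star_{\varphi}\varphi|_W$ by isomorphic forms (Remark~\ref{re:alginv}), while $\G_2\subseteq\SO(7)$ acts transitively on the unit sphere of $V$ and fixes $\varphi$, hence $\star_{\varphi}\varphi$; therefore the four invariants of $\rho$ and $\Omega$ do not depend on the choices and we may take $v=e_7$, $W=\spa{e_1,\dots,e_6}$. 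Contracting \eqref{eq:HodgeDualG2} then gives $\rho=e^{246}-e^{235}-e^{145}-e^{136}$ and $\Omega=e^{1234}+e^{1256}+e^{3456}=\tfrac12\omega^2$ with $\omega:=e^{12}+e^{34}+e^{56}$. The rank claims ($\rk=7$ in (a), $\rk=6$ in (b), (c)) are then immediate: in each of $\varphi,\star_{\varphi}\varphi,\rho,\Omega$ every basis covector of the ambient space occurs in some monomial, so the contraction operator has trivial kernel there.

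For the lengths I would move to the complementary degree by a length-preserving isomorphism $\delta$ (Remark~\ref{re:alginv}, last bullet) realised as a Hodge star. Since $\star_{\varphi}$ is such a $\delta$, we get $l(\varphi)=l(\star_{\varphi}\varphi)$, and $l(\varphi)=5$ is the value computed by Westwick \cite{W} (the upper bound $\le5$ from an explicit five-term decomposition; the lower bound is the delicate point). The Hodge star of the six-dimensional $W$ sends $\tfrac12\omega^2$ to $\omega$, so $l(\Omega)=l(\omega)=3$ because $\omega^3\ne0$ and $\omega^4=0$ (Remark~\ref{re:alginv}, third bullet). Finally, writing $\rho=\Re(\eta_1\wedge\eta_2\wedge\eta_3)$ for suitable $\bC$-valued one-forms $\eta_j$ on $W$ (whose real and imaginary parts span $W^*$) exhibits $\rho$ as a stable three-form of ``complex'' type on $W$; such a form has length $3$: the upper bound because the complex-type orbit is open, hence met by a generic sum of three decomposable three-forms, and the lower bound because a three-form of rank $6$ and length $\le2$ must be of ``split'' type, i.e.\ $\GL(W)$-equivalent to a sum $\vol_A+\vol_B$ for a decomposition $W^*=A\oplus B$ — a different $\GL(W)$-orbit, distinguished from the complex one by the sign of Hitchin's quartic invariant.

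For $r$ and $m$ I would use the descriptions $r(\phi)=\min_{\alpha}l(\Omega(\alpha,U))$ and $m(\phi)=\min_{\alpha}l(\rho(\alpha,U))$ recalled above, together with the fact that the stabiliser in $\GL$ of each of our forms acts transitively on its support — $\G_2$ on $V^*\setminus\{0\}$ for $\varphi$ and $\star_{\varphi}\varphi$, the $\SL(3,\bC)$-factor of the stabiliser on $W^*\cong\bC^3$ for $\rho$, and $\mathrm{Sp}(6,\bR)$ on $W^*$ for $\Omega$ — so that each minimum is attained at every admissible $\alpha$ and may be evaluated once, say at $\alpha=e^7$ in (a) and $\alpha=e^1$ in (b), (c). Recalling that on $\bR^5$ every four-form is decomposable and every three-form has length $\le2$, one reads off the decompositions $\phi=\rho(\alpha,\cdot)\wedge\alpha+\Omega(\alpha,\cdot)$: in (a) one finds for $\varphi$ that $\rho(e^7,\cdot)=\omega$ and $\Omega(e^7,\cdot)=e^{135}-e^{146}-e^{236}-e^{245}$, and for $\star_{\varphi}\varphi$ that $\rho(e^7,\cdot)=-e^{246}+e^{235}+e^{145}+e^{136}$ and $\Omega(e^7,\cdot)=\tfrac12\omega^2$, all of length $3$ by the previous paragraph, so $r=m=3$; in (b), $\rho(e^1,\cdot)=-(e^{45}+e^{36})$ and $\Omega(e^1,\cdot)=e^{246}-e^{235}$, both of length $2$, so $r=m=2$; in (c), $\Omega(e^1,\cdot)=e^{3456}$ has length $1$ while $\rho(e^1,\cdot)=e^{234}+e^{256}=e^2\wedge(e^{34}+e^{56})$ has length $2$, so $r=1$ and $m=2$. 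That $r\ge1$, i.e.\ $\Omega(\alpha,\cdot)\ne0$, follows from $\rk=6$, using in case (c) the nondegeneracy of $\omega$.

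The genuinely hard ingredients are the two length lower bounds, $l(\varphi)\ge5$ and $l(\rho)\ge3$; these are exactly where \cite{W} and \cite{F} do the orbit analysis (for the three-form on six variables, separating the complex type from the split type of length $2$ via Hitchin's quartic invariant). Everything else is bookkeeping in the adapted basis together with the transitivity reductions, so the main obstacle is imported rather than met head-on.
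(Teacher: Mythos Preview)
Your proposal is correct and follows the approach sketched in the Remark immediately after the lemma (the paper itself only cites \cite{F}): reduce via the transitivity of $\G_2$ on $S^6$ to an adapted basis, identify $\Omega=\tfrac12\omega^2$ and $\rho$ as (minus) the imaginary part of a complex volume form, and import the hard length lower bounds from \cite{W}. Your explicit use of the transitivity of the stabilisers $\G_2$, $\SL(3,\bC)$ and $\mathrm{Sp}(6,\bR)$ on the respective supports to evaluate $r$ and $m$ at a single $\alpha$ is a clean way to make precise what the Remark leaves as ``easily computed''.
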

\begin{remark}
We like to note that Lemma \ref{le:alginv1} may also be proved more directly. Therefore, note that by Remark \ref{re:alginv} we may assume that the decomposition $\g=\spa{v}\oplus W$ is orthogonal with respect to the induced metric. It is well-known, see e.g. \cite{CLSS}, that then $\Omega(v,W)=\frac{1}{2}\omega^2$ for some non-degenerate $\omega\in \Lambda^2 W^*$ such that $(\omega,\rho(v,W))\in \Lambda^2 W^*\times \Lambda^3 W^*$ is an $\SU(3)$-structure on $W$. \cite{W} gives us now the values of the algebraic invariants for $\rho(v,W)$ and the ones for $\Omega(v,W)=\frac{1}{2}\omega^2$ are easily computed.
\end{remark}
We end the subsection by proving the following technical lemma which we will apply in some of the proofs in Section \ref{sec:class}.
\begin{lemma}\label{le:alginv2}
Let $V$ be a six-dimensional vector space.
\begin{enumerate}
\item
Let $V=V_3\op W_3$ be a decomposition into two vector spaces of dimension three and let $\Omega=\Omega_1+\Omega_2\in \L^4 V^*$ with $\Omega_1\in \L^2 V_3^*\wedge \L^2 W_3^*$ and $\Omega_2 \in  V_3^* \wedge \L^3 W_3^*$ be a four-form of length three. Then the length of $\Omega_1$ is also three.
\item
Let $V=V_4\op V_2$ be a decomposition into a vector space $V_4$ of dimension four and a vector space $V_2$ of dimension two. Let $\rho$ be a three-form of rank six with $r(\rho)= 2$ such that $\rho\in \L^2 V_4^*\wedge  V_2^* \op  V_4^*\wedge \L^2 V_2^*$.  Then, for any basis $\alpha_1,\alpha_2$ of $V_2^*$, the unique two-forms $\omega_1,\omega_2\in \L^2 V_4^*$ such that $\rho-\sum_{i=1}^2 \omega_i\wedge \alpha_i \in  V_4^*\wedge \L^2 V_2^*$ span a two-dimensional subspace in $\L^2 V_4^*$ in which each non-zero element is of length two.
\end{enumerate}
\end{lemma}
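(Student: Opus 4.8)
For part (a), the plan is to use the length-preserving duality isomorphism $\delta$ from Remark \ref{re:alginv}, applied to the splitting $V = V_3 \oplus W_3$. Under the induced $\delta: \L^{k_1} V_3^* \wedge \L^{k_2} W_3^* \to \L^{3-k_1} V_3^* \wedge \L^{3-k_2} W_3^*$, the component $\Omega_1 \in \L^2 V_3^* \wedge \L^2 W_3^*$ is carried to an element of $\L^1 V_3^* \wedge \L^1 W_3^*$, while $\Omega_2 \in V_3^* \wedge \L^3 W_3^*$ is carried to an element of $\L^2 V_3^* \wedge \L^0 W_3^* = \L^2 V_3^*$. So $\delta(\Omega) = \delta(\Omega_1) + \delta(\Omega_2)$ is a two-form on $V$ of length three (hence non-degenerate, since $\dim V = 6$), with $\delta(\Omega_1) \in V_3^* \wedge W_3^*$ and $\delta(\Omega_2) \in \L^2 V_3^*$. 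Writing $\eta_1 = \delta(\Omega_1)$ and $\eta_2 = \delta(\Omega_2)$, I would compute $(\eta_1 + \eta_2)^3 = \eta_1^3 + 3\eta_1^2 \wedge \eta_2 + 3 \eta_1 \wedge \eta_2^2$ (the $\eta_2^3$ term vanishes as $\eta_2$ lives on the three-dimensional $V_3^*$). Since $\eta_2 \in \L^2 V_3^*$ and $\eta_1$ has exactly one leg in $V_3^*$, every surviving term of $(\eta_1+\eta_2)^3$ has at most $1 + 2 = 3$ legs in $V_3^*$, but $\eta_1^2 \wedge \eta_2$ has $2 + 2 = 4$ legs in $V_3^*$ and so vanishes, as does $\eta_1 \wedge \eta_2^2$ ($1 + 4$ legs). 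Hence $0 \neq (\eta_1 + \eta_2)^3 = \eta_1^3$, so $\eta_1$ has length three, and applying $\delta$ back (or rather the fact that $l(\Omega_1) = l(\delta(\Omega_1))$) gives $l(\Omega_1) = 3$.

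For part (b), I would first translate the hypothesis $r(\rho) = 2$ into a statement about the two-forms $\omega_1, \omega_2$. Fix the basis $\alpha_1, \alpha_2$ of $V_2^*$, so $\rho = \omega_1 \wedge \alpha_1 + \omega_2 \wedge \alpha_2 + (\text{term in } V_4^* \wedge \L^2 V_2^*)$ with $\omega_i \in \L^2 V_4^*$ uniquely determined. Observe that $[\rho] = V^*$ since $\rk(\rho) = 6$, and note that for any $\alpha \in \spa{\alpha_1,\alpha_2} \setminus \{0\}$, say $\alpha = \alpha_1$ after a change of basis of $V_2^*$, the "$\Omega$-part" $\Omega(\alpha_1, U)$ in the decomposition $\rho = \rho(\alpha_1,U)\wedge \alpha_1 + \Omega(\alpha_1,U)$ with $U$ a complement of $\spa{\alpha_1}$ in $V^*$ chosen to contain $V_4^* \oplus \spa{\alpha_2}$, is precisely $\omega_2 \wedge \alpha_2$ (modulo the $V_4^*\wedge\L^2 V_2^*$ piece, which after absorbing is again of this shape) — so $l(\omega_2 \wedge \alpha_2) = l(\omega_2)$ is at least $r(\rho) = 2$ by definition of $r$, giving $l(\omega_2) = 2$, i.e. $\omega_2$ is non-degenerate on $V_4^*$. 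By symmetry $\omega_1$ is non-degenerate too, and more generally for every nonzero $(\lambda_1,\lambda_2)$, changing basis of $V_2^*$ so that $\lambda_1 \alpha_1 + \lambda_2 \alpha_2$ becomes a basis vector, the same argument shows $\lambda_1 \omega_1 + \lambda_2 \omega_2$ (which is the corresponding new $\omega$-coefficient up to the uninteresting $\L^2 V_2^*$-part) is non-degenerate. This proves every nonzero element of $\spa{\omega_1,\omega_2}$ has length two; finally $\omega_1, \omega_2$ are linearly independent, since a relation $\lambda_1\omega_1 + \lambda_2\omega_2 = 0$ with $(\lambda_1,\lambda_2)\neq 0$ would contradict non-degeneracy of that combination, so the span is genuinely two-dimensional.

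The main obstacle is bookkeeping in part (b): making the identification of $\Omega(\alpha, U)$ with $\omega_j \wedge \alpha_k$ (for the complementary index) fully rigorous, since the term in $V_4^* \wedge \L^2 V_2^*$ must be handled — it either gets absorbed into the $\rho(\alpha,U)\wedge\alpha$ summand or is itself decomposable and can be merged without increasing length. One must check that after a change of basis of $V_2^*$ sending a prescribed nonzero covector to a basis vector, the new "$\omega$-coefficients" are exactly the corresponding linear combinations of the old ones (this is a straightforward linear substitution in $\rho = \sum \omega_i \wedge \alpha_i + (\cdots)$, using that $\L^2 V_2^* $ is one-dimensional so the extra term transforms predictably), and that $r(\rho) = 2$ forces $l$ of each such $\Omega$-part to be $\geq 2$, while $\dim V_4 = 4$ forces $l \leq 2$. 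Part (a) is essentially immediate once the $\delta$-duality and the leg-counting are set up.
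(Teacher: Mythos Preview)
Your approach matches the paper's in both parts: part (a) is exactly the paper's argument via the splitting-adapted duality $\delta$ and the observation $(\delta(\Omega_1)+\delta(\Omega_2))^3=\delta(\Omega_1)^3$, and part (b) uses the same idea of reading off a lower bound on $l(a\omega_1+b\omega_2)$ from the definition of $r(\rho)$ applied to a suitable hyperplane in $V^*$.

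One bookkeeping point in (b): your claim that after sending $\lambda_1\alpha_1+\lambda_2\alpha_2$ to a basis vector the new $\omega$-coefficient is $\lambda_1\omega_1+\lambda_2\omega_2$ is not literally correct---the $\omega$'s transform by the \emph{inverse} of the matrix acting on the $\alpha$'s, so the combination that actually appears is (a scalar multiple of) $\lambda_1\omega_2-\lambda_2\omega_1$ or similar, depending on which new basis vector you peel off. This does not break the argument, since as $(\lambda_1,\lambda_2)$ varies you still hit every nonzero element of $\spa{\omega_1,\omega_2}$, but the paper sidesteps the issue entirely: for given $b$ it writes
\[
\rho=(\omega_2+\beta\wedge\alpha_1)\wedge(\alpha_2-b\alpha_1)+(\omega_1+b\omega_2)\wedge\alpha_1,
\]
so that the $\Omega$-part for the splitting $V^*=(V_4^*\oplus\spa{\alpha_1})\oplus\spa{\alpha_2-b\alpha_1}$ is visibly $(\omega_1+b\omega_2)\wedge\alpha_1$, giving $l(\omega_1+b\omega_2)\geq r(\rho)=2$ directly.
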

\begin{proof}
\begin{enumerate}
\item
We use a dual isomorphism $\delta$ adapted to the splitting as explained above. Then $\delta(\Omega_1)\in  V_3^*\wedge  W_3^*$ and $\delta(\Omega_2)\in \L^2 V_3^*$. Since the length of $\delta(\Omega)$ is three, we have $0\neq \delta(\Omega)^3=(\delta(\Omega_1)+\delta(\Omega_2) )^3=\delta(\Omega_1)^3$. Thus, $\delta(\Omega_1)$ and so $\Omega_1$ has length three.
\item
There is $\beta\in  V_4^*$ such that $\rho=\omega_1\wedge \alpha_1+\omega_2\wedge \alpha_2+\beta\wedge \alpha_1\wedge \alpha_2$. We have to show that $l(a\omega_1+b\omega_2)=2$ for all $(a,b)\neq (0,0)$. Without loss of generality, we may assume $a\neq 0$ and then even $a=1$. If we rewrite $\rho$ as
\begin{equation*}
\rho= (\omega_2+\beta\wedge \alpha_1)\wedge (\alpha_2-b \alpha_1)+ (\omega_1+b\omega_2)\wedge \alpha_1
\end{equation*}
we see that $(\omega_1+b\omega_2)\wedge \alpha_1\in \L^3 (V_4^*\op \spa{\alpha_1})$ and $(\omega_2+\beta\wedge \alpha_1)\in \L^2 (V_4^*\op \spa{\alpha_1})$. Thus, $r(\rho)=2$ implies $l((\omega_1+b\omega_2)\wedge \alpha_1)\geq 2$ (consider $V^*=(V_4^*\op \spa{\alpha_1})\op \spa{\alpha_2-b\alpha_1}$) and so $l(\omega_1+b\omega_2)\geq 2$. Since the maximal length of a two-form in four dimensions is two, we get $l(\omega_1+b\omega_2)= 2$.
\end{enumerate}
\end{proof}
\subsection{Construction of $\G_2$-structures}\label{subsec:constructionG2}
In this subsection, we show how one may construct a $\G_2$-structure $\varphi\in \L^3 V^*$ on a seven-dimensional vector space $V$ from a given decomposition $V=V_4\oplus V_3$ of $V$ into a four-dimensional subspace $V_4$ and a three-dimensional subspace $V_3$ and from certain two-forms on $V_4$ and $V_3$. The decomposition $V=V_4\oplus V_3$ will then be an \emph{adapted splitting for $\varphi$}.
\begin{definition}\label{def:adaptedsplittingvspaces}
Let $\varphi\in \L^3 V^*$ be a $\G_2$-structure on a seven-dimensional vector space $V$. A splitting $V=V_4\op V_3$ is called \emph{adapted (for $\varphi$)} if there exists an adapted basis $(f_1,\ldots,f_7)$ for $\varphi$ such that $f_1,\ldots, f_4$ is a basis of $V_4$ and $f_5,f_6,f_7$ is a basis of $V_3$. If $M$ is a seven-dimensional manifold and $\varphi\in \Omega^3 M$ is a $\G_2$-structure on $M$, then an \emph{adapted splitting (for $\varphi$)} is a decomposition $TM=E_4\oplus E_3$ of $TM$ into subbundles $E_4$ and $E_3$ such that for all $p\in M$ the vector space decomposition $T_p M=(E_4)_p\oplus (E_3)_p$ is an adapted splitting for $\varphi_p\in \L^3 T_p M^*$.
\end{definition}
The following lemma follows directly from Equation (\ref{eq:HodgeDualG2}) and the fact that adapted bases are orthonormal bases.
\begin{lemma}\label{le:AdaptedSplitting}
Let $V$ be a seven-dimensional vector space, $\varphi\in \L^3 V^*$ be a $\G_2$-structure on $V$ and $V=V_4\oplus V_3$ be an adapted splitting. Then the decomposition $V=V_4\oplus V_3$ is orthogonal with respect to $g_{\varphi}$ and there exist a non-zero $\Omega_1\in \L^4 V_4^*$ and a non-zero $\Omega_2\in \L^2 V_4^*\wedge \L^2 V_3^*$ such that
\begin{equation}\label{eq:AdaptedSplitting1}
\star_{\varphi}\varphi=\Omega_1+\Omega_2.
\end{equation}
Moreover, if $\tilde{\varphi}\in \L^3 V$ is a $\G_2$-structure with adapted basis $(F_1,\ldots,F_7)$, $F_j=\frac{1}{\lambda} f_j$ for $j=1,2,3,4$, $F_l=f_l$ for $l=5,6,7$, then the splitting $V=V_4\oplus V_3$ is also adapted for $\tilde{\varphi}$, $\, g_{\tilde\varphi}|_{V_4}=\lambda^2 g_{\varphi}|_{V_4},g_{\tilde\varphi}|_{V_3}=g_{\varphi}|_{V_3}$ and
\begin{equation}\label{eq:AdaptedSplitting2}
\star_{\tilde{\varphi}}\tilde{\varphi}=\lambda^4 \Omega_1+\lambda^2 \Omega_2.
\end{equation}
\end{lemma}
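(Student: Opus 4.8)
The plan is to exploit the very explicit coordinate description of a $\G_2$-structure and its Hodge dual in Equations (\ref{eq:G2structure}) and (\ref{eq:HodgeDualG2}). First I would take an adapted basis $(f_1,\dots,f_7)$ for $\varphi$ witnessing the adaptedness of the splitting, i.e. with $f_1,\dots,f_4$ a basis of $V_4$ and $f_5,f_6,f_7$ a basis of $V_3$. By Lemma \ref{le:g2onvs}, adapted bases are oriented orthonormal with respect to $g_\varphi$, so the spans $V_4=\spa{f_1,\dots,f_4}$ and $V_3=\spa{f_5,f_6,f_7}$ are automatically $g_\varphi$-orthogonal, giving the orthogonality claim. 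Then I would simply read off Equation (\ref{eq:HodgeDualG2}): in the dual basis $f^1,\dots,f^7$ one has
\[
\star_\varphi\varphi = f^{1234} + \bigl(f^{1256}+f^{3456}+f^{2357}+f^{1457}\bigr) + \bigl(-f^{2467}+f^{1367}\bigr).
\]
The term $f^{1234}$ lies in $\L^4 V_4^*$ and is nonzero; the remaining six terms each involve exactly two of the indices $\{5,6,7\}$ and two of $\{1,2,3,4\}$ (here I should double-check: $f^{1256}$ has $5,6$; $f^{3456}$ has $5,6$; $f^{2357}$ has $5,7$; $f^{1457}$ has $5,7$; $f^{2467}$ has $6,7$; $f^{1367}$ has $6,7$ — good), hence lie in $\L^2 V_4^*\wedge\L^2 V_3^*$ and their sum is nonzero. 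Setting $\Omega_1:=f^{1234}$ and $\Omega_2:=$ (sum of the other six terms) gives (\ref{eq:AdaptedSplitting1}).

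For the second assertion I would observe that rescaling $F_j=\tfrac1\lambda f_j$ for $j\le 4$ and $F_l=f_l$ for $l\ge 5$ produces, by Equation (\ref{eq:G2structure}) applied to the tuple $(F_1,\dots,F_7)$, a genuine $\G_2$-structure $\tilde\varphi$ — one just needs to check that $(F_1,\dots,F_7)$ is again an adapted basis, which is immediate since Equation (\ref{eq:G2structure}) is a fixed polynomial in the dual basis and substituting $F^j=\lambda f^j$ (resp.\ $F^l=f^l$) reproduces the same normal form for $\tilde\varphi$ up to the scaling already absorbed into the definition of $\tilde\varphi$ via its adapted basis. Since $(F_1,\dots,F_7)$ splits the same way across $V_4$ and $V_3$, the splitting $V=V_4\oplus V_3$ is adapted for $\tilde\varphi$ as well. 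The metric statement $g_{\tilde\varphi}|_{V_4}=\lambda^2 g_\varphi|_{V_4}$, $g_{\tilde\varphi}|_{V_3}=g_\varphi|_{V_3}$ follows because $(F_1,\dots,F_7)$ is $g_{\tilde\varphi}$-orthonormal, so $g_{\tilde\varphi}(f_j,f_j)=g_{\tilde\varphi}(\lambda F_j,\lambda F_j)=\lambda^2$ for $j\le4$ and $g_{\tilde\varphi}(f_l,f_l)=1$ for $l\ge5$, with all cross terms vanishing.

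Finally, to get (\ref{eq:AdaptedSplitting2}) I would apply Equation (\ref{eq:HodgeDualG2}) to $(F_1,\dots,F_7)$, obtaining $\star_{\tilde\varphi}\tilde\varphi$ as the same seven-term expression but in the dual basis $F^1,\dots,F^7$. Re-expressing in terms of $f^j$ via $F^j=\lambda f^j$ for $j\le 4$ and $F^l=f^l$ for $l\ge 5$: the term $F^{1234}=\lambda^4 f^{1234}=\lambda^4\Omega_1$, while each of the six mixed terms picks up exactly two factors of $\lambda$ (two lower indices in $\{1,2,3,4\}$, two in $\{5,6,7\}$), so their sum equals $\lambda^2\Omega_2$. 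This yields $\star_{\tilde\varphi}\tilde\varphi=\lambda^4\Omega_1+\lambda^2\Omega_2$, completing the proof. I do not anticipate a genuine obstacle here — the lemma is essentially a bookkeeping consequence of the normal forms (\ref{eq:G2structure}) and (\ref{eq:HodgeDualG2}); the only point requiring mild care is verifying that the six non-$f^{1234}$ summands of $\star_\varphi\varphi$ each distribute as two indices in $\{1,2,3,4\}$ and two in $\{5,6,7\}$, which is what makes both the membership $\Omega_2\in\L^2 V_4^*\wedge\L^2 V_3^*$ and the uniform factor $\lambda^2$ work out.
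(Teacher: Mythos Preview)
Your proposal is correct and follows exactly the approach the paper indicates: the paper does not give a detailed proof but simply states that the lemma ``follows directly from Equation (\ref{eq:HodgeDualG2}) and the fact that adapted bases are orthonormal bases.'' Your argument is a careful unwinding of precisely this observation, with the index-by-index check that each non-$f^{1234}$ term of $\star_\varphi\varphi$ lies in $\L^2 V_4^*\wedge\L^2 V_3^*$ and hence scales by $\lambda^2$.
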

\begin{remark}
An adapted splitting is also called \emph{coassociative/associative splitting}, see \cite{AS}. This is due to the fact that $V_3$ is a calibrated subspace for $\varphi$ and $V_4$ is a calibrated subspace for $\star_{\varphi}\varphi$. However, since we do not need calibrations at all in this article, we prefer the term ``adapted splitting''.
\end{remark}
Next, we give equivalent conditions when a subspace $W\subseteq \L^2 V^*$ of the two-forms on a four-dimensional vector space $V$ consists, with the exception of the origin, solely of two-forms of length two.
\begin{lemma}\label{le:subspaceslengthtwo}
Let $V$ be a four-dimensional vector space, $k\in \{0,1,2,3\}$, $\omega_1,\ldots,\omega_k\in \L^2 V^*$ be arbitrary two-forms on $V$, $\tau\in \L^4 V^*\backslash\{0\}$ and $\pi$ be an arbitrary permutation of $\{1,2,3\}$. Set $W:=\spa{\omega_1,\ldots,\omega_k}$,  $\tilde{\omega}_1:=e^{12}+e^{34}\in \L^2 \left(\bR^4\right)^*$, $\tilde\omega_2:=e^{13}-e^{24}\in \L^2 \left(\bR^4\right)^*$, $\tilde{\omega}_3:=e^{14}+e^{23}\in \L^2 \left(\bR^4\right)^*$. Moreover, define the symmetric matrix $H=(h_{ij})_{ij}\in \bR^{k\times k}$ by $\omega_i\wedge \omega_j=h_{ij}\tau$ for $i,j=1,\ldots, k$. Then the following are equivalent:
\begin{enumerate}
\item[(i)]
$W$ is $k$-dimensional and each element in $W\backslash \{0\}$ has length two.
\item[(ii)]
There is an isomorphism $u:V\rightarrow \bR^4$ such that $\left\{ u^*\tilde{\omega}_{\pi(i)}|i=1,\ldots,k\right\}$ is a basis of $W$.
\item[(iii)]
$H$ is definite.
\item[(iv)]
There exists a Euclidean metric and an orientation on $V$ such that $W$ is a subspace of the space of all self-dual two-forms on $V$.
\end{enumerate}
\end{lemma}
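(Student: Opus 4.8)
The plan is to introduce the quadratic form $q\colon\L^2V^*\to\bR$ defined by $\omega\wedge\omega=q(\omega)\,\tau$, with polarisation $B$, so that $H=(B(\omega_i,\omega_j))_{ij}$. By Remark~\ref{re:alginv}, on the four-dimensional $V$ a two-form $\omega$ has length two exactly when $\omega\wedge\omega\neq0$, i.e.\ when $q(\omega)\neq0$. Choosing a basis $e^1,\dots,e^4$ of $V^*$ with $e^{1234}=\tau$, one computes $\tilde\omega_i\wedge\tilde\omega_j=2\delta_{ij}\tau$ and, with $\tilde\omega_1^-:=e^{12}-e^{34}$, $\tilde\omega_2^-:=e^{13}+e^{24}$, $\tilde\omega_3^-:=e^{14}-e^{23}$, also $\tilde\omega_i^-\wedge\tilde\omega_j^-=-2\delta_{ij}\tau$ and $\tilde\omega_i\wedge\tilde\omega_j^-=0$; since these six forms are a basis of $\L^2V^*$, the form $q$ has signature $(3,3)$, every positive-definite subspace lies inside a three-dimensional maximal one, and $\spa{\tilde\omega_1,\tilde\omega_2,\tilde\omega_3}$ is precisely the space $\L^+(\bR^4)$ of self-dual two-forms for the Euclidean metric and orientation making $e_1,\dots,e_4$ an oriented orthonormal basis. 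I would then prove the cycle $(ii)\Rightarrow(i)\Rightarrow(iii)\Rightarrow(ii)$ together with $(ii)\Rightarrow(iv)\Rightarrow(iii)$.

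The implications $(ii)\Rightarrow(i)$, $(ii)\Rightarrow(iv)$, $(i)\Rightarrow(iii)$, $(iv)\Rightarrow(iii)$ are all quick. If $(ii)$ holds then $\dim{W}=k$ and $W\subseteq u^*\spa{\tilde\omega_1,\tilde\omega_2,\tilde\omega_3}$; every nonzero element of $\spa{\tilde\omega_1,\tilde\omega_2,\tilde\omega_3}=\L^+(\bR^4)$ is non-degenerate (its square is a nonzero multiple of the volume form), hence of length two, and as isomorphisms preserve length (Remark~\ref{re:alginv}) so does every nonzero element of $W$, which is $(i)$; pulling the standard metric and orientation of $\bR^4$ back along $u$ gives $(iv)$, because $u$ is then an orientation-preserving isometry and $\spa{\tilde\omega_1,\tilde\omega_2,\tilde\omega_3}=\L^+(\bR^4)$. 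For $(i)\Rightarrow(iii)$: by $(i)$ the $\omega_i$ are a basis of $W$, so $H$ represents $q|_W$, and a real quadratic form with no nonzero isotropic vector is definite. For $(iv)\Rightarrow(iii)$: on $\L^+(V)$ one has $q=\pm\lVert\cdot\rVert_g^2$, the sign depending on whether $\tau$ is a positive or negative multiple of $\vol_g$, so $q|_W$ is definite and hence so is its Gram matrix $H$ in the basis $\omega_1,\dots,\omega_k$ of $W$.

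The substantive step is $(iii)\Rightarrow(ii)$. Replacing $\tau$ by $-\tau$ if necessary (this affects neither $(ii)$ nor the definiteness of $H$) I would assume $H$ positive definite; then $\omega_1,\dots,\omega_k$ are linearly independent, $q|_W$ is positive definite, and I would enlarge $W$ to a three-dimensional maximal positive-definite subspace $\widetilde W$ of $(\L^2V^*,q)$ and choose a $q$-orthogonal basis $v_1,v_2,v_3$ of $\widetilde W$ with $v_i\wedge v_j=2\delta_{ij}\tau$, ordered so that $\{v_j:j\in\{\pi(1),\dots,\pi(k)\}\}$ spans $W$. The triple $(v_1,v_2,v_3)$ now satisfies exactly the relations of $(\tilde\omega_1,\tilde\omega_2,\tilde\omega_3)$, and the key point is that some isomorphism $u\colon V\to\bR^4$ carries the first triple to the second; then $\{u^*\tilde\omega_{\pi(i)}:i\le k\}=\{v_j:j\in\{\pi(1),\dots,\pi(k)\}\}$ is a basis of $W$, i.e.\ $(ii)$.

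This normalisation of a ``hypersymplectic triple'' to the standard one is where the real work lies and is the step I expect to be the main obstacle. I would prove it by the usual hyperk\"ahler linear algebra: $v_1$ is symplectic, so one defines $J_2,J_3\in\mathfrak{gl}(V)$ by $J_aX\hook v_1=X\hook v_a$ for $a=2,3$; the relations $v_i\wedge v_j=2\delta_{ij}\tau$ force $J_2^2=J_3^2=-\id$ and $J_2J_3=-J_3J_2$, so $I:=J_2J_3$ is a complex structure, and (after replacing the triple by its negative if needed) $g(X,Y):=v_1(X,IY)$ is a Euclidean metric in an oriented orthonormal basis of which $v_1,v_2,v_3$ become $f^{12}+f^{34},\,f^{13}-f^{24},\,f^{14}+f^{23}$; sending that basis to the standard basis of $\bR^4$ gives the desired $u$. (Equivalently one may invoke that such triples are exactly the hyperk\"ahler triples on a four-dimensional space, or that $\GL(V)$ acts transitively on the maximal positive-definite subspaces of $(\L^2V^*,q)$ -- a fact also following from the local isomorphism $\mathfrak{gl}(4,\bR)\cong\mathfrak{co}(3,3)$.) Apart from this step, the proof is pure bookkeeping with $q$.
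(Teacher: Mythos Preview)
Your argument is correct and takes a genuinely different route from the paper for the one nontrivial implication. The paper proves $(i)\Leftrightarrow(iii)$ by the same quadratic-form computation you give (a two-form has length two iff $q(\omega)\neq0$, so $q|_W$ has no nonzero isotropic vector iff $H$ is definite) and deduces $(ii)\Leftrightarrow(iv)$ from the fact that $\tilde\omega_1,\tilde\omega_2,\tilde\omega_3$ span the self-dual two-forms on $\bR^4$; but for the hard step $(i)\Rightarrow(ii)$ it simply invokes two theorems of Westwick \cite{W} as a black box. You instead close the cycle via $(iii)\Rightarrow(ii)$: you identify the wedge pairing as a signature-$(3,3)$ quadratic form on $\L^2 V^*$, enlarge $W$ to a maximal positive-definite subspace, pick a $q$-orthonormal basis $(v_1,v_2,v_3)$ adapted to $W$ and to the given permutation $\pi$, and then normalise this hyperk\"ahler triple to the standard one by defining $J_2,J_3$ via $v_1$ and reading off an adapted orthonormal frame. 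This is longer but entirely self-contained and more transparent; it makes explicit the underlying transitivity of $\GL(V)$ on ordered triples with $v_i\wedge v_j=2\delta_{ij}\tau$ (equivalently, as you note, the local isomorphism $\mathfrak{gl}(4,\bR)\cong\mathfrak{co}(3,3)$), whereas the paper's citation hides this mechanism.

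One small caveat, which is really a defect of the stated lemma rather than of your proof: condition $(iv)$ as written does not force $\dim W=k$ (take $k=1$, $\omega_1=0$; then $W=\{0\}$ sits in $\L^+$ but $H=(0)$ is not definite), so strictly $(iv)\not\Rightarrow(iii)$. Both you and the paper tacitly read $(iv)$ as including $\dim W=k$; your sentence ``its Gram matrix $H$ in the basis $\omega_1,\dots,\omega_k$ of $W$'' makes this assumption explicit.
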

\begin{proof}
Condition (i) implies Condition (ii) by \cite[Theorem 3.1]{W} and \cite[Theorem 3.2]{W}. The converse direction follows since $\tilde{\omega}_i\wedge \tilde{\omega}_j=0$ for $i\neq j$ and so $\omega^2\neq 0$ for all $\omega\in W\backslash \{0\}$ if $\left\{ u^*\tilde{\omega}_{\pi(i)}|i=1,\ldots,k\right\}$ is a basis of $W$. Since $\tilde{\omega}_1,\, \tilde{\omega}_2,\,\tilde{\omega}_3$ form a basis of the self-dual two-forms on $\bR^4$ with respect to the standard Euclidean metric and orientation, we get the equivalence of (ii) and (iv).
To prove the equivalence of (i) and (iii), let $\omega=\sum_{i=1}^k a_i \omega_i\in W$ with $a:=(a_1,\ldots,a_k)^t\neq 0$. By Remark \ref{re:alginv}, $\omega$ has length two if and only if
\begin{equation*}
0\neq \omega^2=\sum_{i,j=1}^k a_i h_{ij} a_j \tau=a^t H a\, \tau,
\end{equation*} 
i.e. if and only if $a^t Ha\neq 0$. Hence, all elements in $W\backslash\{0\}$ have length two if and only if $H$ is definite.
\end{proof}
Now we are able to prove the main result of this subsection.
\begin{proposition}\label{pro:Hodgedualbytwoforms}
Let $V$ be a seven-dimensional vector space and $V=V_4\oplus V_3$ be a vector space decomposition of $V$ into a four-dimensional vector space $V_4$ and into a three-dimensional vector space $V_3$. Fix $\tau\in \L^4 V_4^*\backslash \{0\}$. Let $k\in \{0,1,2,3\}$ and $\omega_i\in \L^2 V_4^*$ for $i=1,\ldots,k$ be such that the symmetric matrix $H=(h_{ij})_{ij}\in \bR^{k\times k}$ defined by
\begin{equation*}
h_{ij}\tau=\omega_i\wedge \omega_j
\end{equation*}
is definite, where $k=0$ means that there is no condition. Then $V$ admits two-forms $\omega_{k+1},\ldots,\omega_3\in \L^2 V_4^*$ such that for all bases $\nu_1,\ldots,\nu_3\in \L^2 V_3^*$ of $\L^2 V_3^*$ the four-form
\begin{equation}\label{eq:G2HodgeDualbytwoforms}
\Psi:=\frac{1}{2}\omega_1^2+\sum_{i=1}^3 \omega_i\wedge \nu_i
\end{equation}
is the Hodge Dual of a $\G_2$-structure on $V$ and $V=V_4\op V_3$ is an adapted splitting.
\end{proposition}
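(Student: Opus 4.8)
\emph{Proof plan.} The idea is to exhibit $\Psi$ as the pullback of the model Hodge dual (\ref{eq:HodgeDualG2}) under an automorphism of $V$ preserving the decomposition $V=V_4\op V_3$; this at once yields that $\Psi$ is a Hodge dual and that $V=V_4\op V_3$ is an adapted splitting. First I would fix the missing two-forms. Since $H$ is definite, Lemma \ref{le:subspaceslengthtwo} (equivalence of (i), (iii) and (iv)) shows that $\omega_1,\ldots,\omega_k$ are linearly independent and that there are a Euclidean metric $g$ and an orientation on $V_4$ for which $\mathrm{span}(\omega_1,\ldots,\omega_k)$ is contained in the three-dimensional space $\L^2_+V_4^*$ of self-dual two-forms, every non-zero element of which has length two. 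I pick $\omega_{k+1},\ldots,\omega_3\in\L^2_+V_4^*$ completing $\omega_1,\ldots,\omega_k$ to a basis of $\L^2_+V_4^*$; these are the asserted two-forms. Choosing an oriented $g$-orthonormal basis $e_1,\ldots,e_4$ of $V_4$ and any basis $e_5,e_6,e_7$ of $V_3$, and letting $\varphi_0$ be the $\G_2$-structure on $V$ with adapted basis $(e_1,\ldots,e_7)$, one sorts the seven terms of (\ref{eq:HodgeDualG2}) by their $\L^2V_3^*$-component to obtain
\begin{equation*}
\star_{\varphi_0}\varphi_0=\tfrac12\tilde\omega_1^2+\tilde\omega_1\wedge e^{56}+\tilde\omega_2\wedge e^{67}+\tilde\omega_3\wedge e^{57},
\end{equation*}
with $\tilde\omega_1=e^{12}+e^{34}$, $\tilde\omega_2=e^{13}-e^{24}$, $\tilde\omega_3=e^{14}+e^{23}$ a basis of $\L^2_+V_4^*$ satisfying $\tilde\omega_i\wedge\tilde\omega_j=2\delta_{ij}e^{1234}$; in particular $V=V_4\op V_3$ is adapted for $\varphi_0$.

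Next, for an arbitrary basis $\nu_1,\nu_2,\nu_3$ of $\L^2V_3^*$ I write $\omega_i=\sum_{j=1}^3 A_{ij}\tilde\omega_j$ with $A=(A_{ij})\in\GL_3(\bR)$ (possible since the $\omega_i$ span $\L^2_+V_4^*$). Using $\tilde\omega_i\wedge\tilde\omega_j=2\delta_{ij}e^{1234}$ one finds $\tfrac12\omega_1^2=s\,e^{1234}$ with $s:=\sum_j A_{1j}^2>0$ (the first row of an invertible matrix is non-zero), and, collecting the wedge terms,
\begin{equation*}
\Psi=s\,e^{1234}+\tilde\omega_1\wedge\mu_1+\tilde\omega_2\wedge\mu_2+\tilde\omega_3\wedge\mu_3,\qquad \mu_j:=\sum_{i=1}^3 A_{ij}\nu_i,
\end{equation*}
where $\mu_1,\mu_2,\mu_3$ is again a basis of $\L^2V_3^*$ because $A$ is invertible.

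It remains to find $\phi=\phi_4\op\phi_3\in\GL(V_4)\times\GL(V_3)\subseteq\GL(V)$ with $\phi^*(\star_{\varphi_0}\varphi_0)=\Psi$. Taking $\phi_4=s^{1/4}\id_{V_4}$ transforms the model into $s\,e^{1234}+s^{1/2}\sum_{i}\tilde\omega_i\wedge\phi_3^*\eta_i$, where $(\eta_1,\eta_2,\eta_3)=(e^{56},e^{67},e^{57})$, so it suffices to take $\phi_3\in\GL(V_3)$ with $\phi_3^*\eta_i=s^{-1/2}\mu_i$ for $i=1,2,3$; such a $\phi_3$ exists because $\GL(V_3)$ realises the change of basis $(\eta_i)\rightsquigarrow(s^{-1/2}\mu_i)$ of the three-dimensional space $\L^2V_3^*$ (up to the orientation check discussed below). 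For such a $\phi$, naturality of $\varphi\mapsto\star_\varphi\varphi$ gives $\Psi=\phi^*(\star_{\varphi_0}\varphi_0)=\star_{\phi^*\varphi_0}(\phi^*\varphi_0)$, so $\Psi$ is the Hodge dual of the $\G_2$-structure $\phi^*\varphi_0$; and since $\phi$ preserves the splitting, $\phi^{-1}$ carries the adapted basis $(e_1,\ldots,e_7)$ of $\varphi_0$ to an adapted basis of $\phi^*\varphi_0$ whose first four vectors lie in $V_4$ and whose last three lie in $V_3$, so $V=V_4\op V_3$ is an adapted splitting for $\phi^*\varphi_0$.

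The step I expect to need the most care is this final matching, i.e.\ checking that the recombined form $s\,e^{1234}+\sum_i\tilde\omega_i\wedge\mu_i$ really is $\GL(V_4)\times\GL(V_3)$-equivalent to the model. The scalar $s$ is absorbed by the conformal rescaling of $V_4$, exactly as in Lemma \ref{le:AdaptedSplitting}; for the remaining change of basis of $\L^2V_3^*$ one has to track the induced representations, namely that an element of $\GL(V_4)$ preserving $\L^2_+V_4^*$ acts on it only through (positively scaled) rotations, while $\GL(V_3)$ acts on $\L^2V_3^*$ only through maps of positive determinant; this is what forces the argument through precisely the model form above, and shows that the basis $\nu$ of $\L^2V_3^*$ must be compatibly oriented. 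Everything else reduces to the explicit computation with (\ref{eq:HodgeDualG2}) and the wedge identities for the $\tilde\omega_i$.
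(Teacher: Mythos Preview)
Your approach is essentially the paper's: both arguments complete $\omega_1,\ldots,\omega_k$ to a basis of a space of self-dual two-forms on $V_4$ (via Lemma~\ref{le:subspaceslengthtwo}), rewrite $\Psi$ in terms of the standard $\tilde\omega_i$, and then match the model Hodge dual. The only organizational difference is that the paper first adjusts the isomorphism $u:V_4\to\bR^4$ so that $\omega_1=u^*\tilde\omega_1$ exactly (which forces your scalar $s$ to equal $1$) and then writes down an adapted basis $(f_1,\ldots,f_7)$ directly, whereas you keep $s$ and phrase the matching as finding $\phi=\phi_4\oplus\phi_3\in\GL(V_4)\times\GL(V_3)$ with $\phi^*(\star_{\varphi_0}\varphi_0)=\Psi$. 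These are the same computation.

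The orientation point you raise in your last paragraph is genuine and is worth recording: the paper's proof asserts without comment that ``there exists a basis $f_5,f_6,f_7$ of $V_3$ such that $\tilde\nu_1=f^{56}$, $\tilde\nu_2=f^{67}$, $\tilde\nu_3=f^{57}$'', but the map $(f^5,f^6,f^7)\mapsto(f^{56},f^{67},f^{57})$ lands in only one $\GL^+(\L^2V_3^*)$-orbit of ordered bases, exactly the constraint you identified via the $\GL(V_3)$-action on $\L^2V_3^*$ having determinant $(\det\phi_3)^2>0$. So your proof and the paper's share the same unaddressed step regarding the ``for all bases $\nu$'' quantifier. Note, however, that every application of the proposition in the paper (Propositions~\ref{pro:existence} and~\ref{pro:technicalities}, Lemma~\ref{le:5d+r2}) \emph{chooses} a specific basis $\nu_1,\nu_2,\nu_3$ before invoking the result, and each of these choices has an evident sign freedom (reordering or negating a $\nu_i$, resp.\ negating one of the constructed $\omega_l$ for $l>k$) that lets one land in the correct orientation class; so the gap is cosmetic for the purposes of the paper.
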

\begin{proof}
Let $\tilde\omega_1:=e^{12}+e^{34}\in \L^2 \left(\bR^4\right)^*,\, \tilde\omega_2:=e^{13}-e^{24}\in \L^2 \left(\bR^4\right)^*,\, \tilde\omega_3:=e^{14}+e^{23}\in \L^2 \left(\bR^4\right)^*$. By Lemma \ref{le:subspaceslengthtwo}, there exists an isomorphism $u:V_4\rightarrow \bR^4$ such that $u^*\tilde{\omega}_1,\ldots,\linebreak u^*\tilde{\omega}_k$ is a basis of $\spa{\omega_1,\ldots,\omega_k}$. Since there is an automorphism of $V_4$ mapping $u^*\tilde{\omega}_1$ onto $\omega_1$, we may, without loss of generality, assume that $\omega_1=u^*\tilde{\omega}_1$. Let $A\in \bR^{k\times k}$, $A=(a_{ij})_{ij}$ be such that $\omega_j=\sum_{i=1}^k a_{ij} \left(u^*\tilde{\omega}_i\right)$ for $j=1,\ldots, k$. Set $f_i:=u^{-1}(e_i)\in V_4$ for $i=1,\ldots,k$ and set $\omega_l:=u^*\tilde\omega_l$ for $l=k+1,\ldots,3$. Since $\nu_1,\ldots,\nu_3$ is a basis, also $\tilde{\nu}_1,\ldots,\tilde{\nu}_3$ with $\tilde{\nu}_j=\sum_{i=1}^k a_{ji}\, \nu_i$ for $j=1,\ldots, k$, $\tilde{\nu}_j:=\nu_j$ for $j=k+1,\ldots,3$ is a basis of $V_3^*$. Thus, there exists a basis $f_5,\,f_6,\,f_7$ of $V_3$ such that $\tilde{\nu}_1=f^{56}$, $\tilde{\nu}_2=f^{67}$ and $\tilde{\nu}_3=f^{57}$ and we can compute
\begin{equation*}
\begin{split}
\Psi&=\frac{1}{2}\omega_1^2+\sum_{i=1}^3 \omega_i\wedge \nu_i=f^{1234}+\sum_{i,j=1}^k a_{ji} \left(u^*\tilde{\omega}_j\right)\wedge \nu_i+\sum_{i=k+1}^3 u^*\tilde{\omega}_i\wedge \tilde\nu_i\\
&=f^{1234}+\sum_{j=1}^3 \left(u^*\tilde{\omega}_j\right)\wedge \tilde{\nu}_j\\&=f^{1234}+f^{1256}+f^{3456}+f^{1367}-f^{2467}+f^{1457}+f^{2357}
\end{split}
\end{equation*}
and we see that $\Psi$ is the Hodge Dual of a $\G_2$-structure with adapted basis $(f_1,f_2,\ldots,f_7)$.
\end{proof}
\begin{remark}
The assertion of Proposition \ref{pro:Hodgedualbytwoforms} has been used implicitly in the literature several times before, cf. e.g. \cite{Br2} and \cite{Ma}.
\end{remark}
\section{Classification Results}\label{sec:class}
\subsection{Existence}\label{subsec:existence}
In this subsection, we state different existence results which are used in the Subsections \ref{subsec:firstcase} - \ref{subsec:fourthcase} to prove Theorem \ref{th:main}.
We begin with a general proposition which is true for any seven-manifold. This proposition is used afterwards to derive different more specific existence results for left-invariant cocalibrated $\G_2$-structures on Lie groups.
\begin{proposition}\label{pro:split}
Let $M$ be a seven-dimensional manifold. Assume that there exists a $\G_2$-structure $\varphi$ on $M$ which admits an adapted splitting $TM=E_4\op E_3$ such that the following is true:
\begin{enumerate}
\item[(i)]
$\Omega_1:=(\star_{\varphi} \varphi)|_{E_4}\in \Gamma\left(\L^4 E_4^*\right)\cong \Gamma\left(\L^4 \Ann{E_3}\right)\subseteq \Gamma(\Lambda^4 T^* M)$ is closed.
\item[(ii)]
There exists a bounded four-form $\Phi\in \Gamma\left(\L^3 \Ann{E_3}\wedge \Ann{E_4}\right)$ (i.e. $\left\|\Phi\right\|_{C_0}<\infty$) with $d\Phi=d\Omega_2$ for the four-form $\Omega_2:=\star_{\varphi} \varphi- \Omega_1\in \Gamma\left(\L^2\Ann{E_3}\wedge \L^2 \Ann{E_4}\right)$.
\end{enumerate}
Then $M$ admits a cocalibrated $\G_2$-structure, e.g. each $\G_2$-structure $\varphi_{\lambda}\in \Omega^3 (M)$ whose Hodge dual is given by
\begin{equation*}
\Psi_{\lambda}:=\lambda^4 \Omega_1 + \lambda^2 \Omega_2-\lambda^2 \Phi
\end{equation*}
for $\lambda\in \bR$ with $\left|\lambda\right| >\frac{\left\|\Phi\right\|_{C_0}}{\epsilon_0}$. Here, $\epsilon_0$ is the constant in Lemma \ref{le:universalconst}
\end{proposition}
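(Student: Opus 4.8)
The plan is to exhibit the explicit one-parameter family $\varphi_\lambda$ and verify that (a) each $\Psi_\lambda$ is the Hodge dual of a genuine $\G_2$-structure, and (b) each $\Psi_\lambda$ is closed. The existence of the starting $\G_2$-structure $\varphi$ with its adapted splitting is given by hypothesis, so by Lemma \ref{le:AdaptedSplitting} we already know that $\star_\varphi\varphi = \Omega_1+\Omega_2$ with $\Omega_1\in\Gamma(\L^4\Ann{E_3})$ and $\Omega_2\in\Gamma(\L^2\Ann{E_3}\wedge\L^2\Ann{E_4})$, and moreover that rescaling an adapted basis by $\tfrac1\lambda$ on $E_4$ and by $1$ on $E_3$ produces a new $\G_2$-structure $\tilde\varphi_\lambda$ whose Hodge dual is $\lambda^4\Omega_1+\lambda^2\Omega_2$. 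This is the backbone; the new term $-\lambda^2\Phi$ is a perturbation that we will absorb using the uniform openness from Lemma \ref{le:universalconst}.

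First I would fix a point $p\in M$ and work in $\L^4 T_p^*M$. Write $\varphi_\lambda$ (to be constructed) for the $\G_2$-structure at $p$ with adapted basis obtained from that of $\varphi$ by the rescaling above; then $\star_{\varphi_\lambda}\varphi_\lambda = \lambda^4\Omega_1+\lambda^2\Omega_2$ pointwise, and the induced metric $g_{\varphi_\lambda}$ restricted to $E_4$ is $\lambda^2 g_\varphi$ while on $E_3$ it agrees with $g_\varphi$. I now compare $\Psi_\lambda = \lambda^4\Omega_1+\lambda^2\Omega_2-\lambda^2\Phi$ with $\star_{\varphi_\lambda}\varphi_\lambda$: their difference is exactly $-\lambda^2\Phi$, and I must bound $\|\lambda^2\Phi\|_{\varphi_\lambda}$. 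Since $\Phi\in\Gamma(\L^3\Ann{E_3}\wedge\Ann{E_4})$ — that is, $\Phi$ has exactly one ``leg'' in $E_3$ and three in $E_4$ — the scaling $g_{\varphi_\lambda}|_{E_4}=\lambda^2 g_\varphi|_{E_4}$ contributes a factor $\lambda^{-3}$ (one from each $E_4$-leg, in the dual metric on forms) while the $E_3$-leg contributes nothing, so $\|\Phi\|_{\varphi_\lambda} = \lambda^{-3}\|\Phi\|_\varphi$ pointwise (one checks this on the orthonormal coframe). Hence $\|\lambda^2\Phi - 0\|_{\varphi_\lambda} = \lambda^2\cdot\lambda^{-3}\|\Phi\|_\varphi = \lambda^{-1}\|\Phi\|_\varphi$, which is $\le \|\Phi\|_{C_0}/|\lambda| < \epsilon_0$ precisely when $|\lambda| > \|\Phi\|_{C_0}/\epsilon_0$. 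By Lemma \ref{le:universalconst}, applied pointwise with the $\G_2$-structure $\varphi_\lambda$ at $p$, the form $\Psi_{\lambda,p}$ is the Hodge dual of a $\G_2$-structure on $T_pM$; since this holds at every $p$ and $\Psi_\lambda$ is smooth, $\Psi_\lambda = \star_{\varphi_\lambda'}\varphi_\lambda'$ for a genuine $\G_2$-structure $\varphi_\lambda'$ on $M$.

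It then remains to check $d\Psi_\lambda = 0$. This is immediate: $d\Psi_\lambda = \lambda^4 d\Omega_1 + \lambda^2 d\Omega_2 - \lambda^2 d\Phi$, and $d\Omega_1 = 0$ by hypothesis (i) while $d\Omega_2 = d\Phi$ by hypothesis (ii), so the last two terms cancel. Thus $\varphi_\lambda'$ is a cocalibrated $\G_2$-structure on $M$ for every $\lambda$ with $|\lambda| > \|\Phi\|_{C_0}/\epsilon_0$, which is exactly the claim.

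\textbf{Main obstacle.} The only genuinely delicate point is the scaling estimate $\|\Phi\|_{\varphi_\lambda} = \lambda^{-3}\|\Phi\|_\varphi$, which is where the hypothesis $\Phi\in\Gamma(\L^3\Ann{E_3}\wedge\Ann{E_4})$ — three $E_4$-legs, one $E_3$-leg — is used essentially; the exponent $-3$ together with the prefactor $\lambda^2$ must produce a negative power of $\lambda$ so that the perturbation shrinks, and this is a balance one should verify carefully on the rescaled adapted coframe. Everything else (closedness, the application of Lemma \ref{le:universalconst} pointwise, smoothness of the resulting structure) is routine.
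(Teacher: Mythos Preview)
Your proof is correct and follows essentially the same route as the paper: rescale the adapted basis on $E_4$ by $1/\lambda$ to obtain the reference Hodge dual $\sigma_\lambda=\lambda^4\Omega_1+\lambda^2\Omega_2$, compute that the perturbation $\lambda^2\Phi$ has $g_{\sigma_\lambda}$-norm $|\lambda|^{-1}\|\Phi\|_\varphi$ because $\Phi$ lies in $\L^3\Ann{E_3}\wedge\Ann{E_4}$, then invoke Lemma~\ref{le:universalconst} pointwise and check closedness from (i) and (ii). The paper phrases the scaling identity as $\|\lambda^3\Phi_p\|_\lambda=\|\Phi_p\|_{\varphi_p}$ rather than $\|\Phi\|_{\varphi_\lambda}=|\lambda|^{-3}\|\Phi\|_\varphi$, but this is the same computation.
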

\begin{proof}
Let $p\in M$. By Lemma \ref{le:AdaptedSplitting}, $(\Omega_2)_p\in \L^2 \Ann{(E_3)_p} \wedge \L^2 \Ann{(E_4)_p}$, $\sigma_{\lambda}:=\lambda^4(\Omega_1)_p+\lambda^2 (\Omega_2)_p$ is the Hodge-Dual of a $\G_2$-structure on $T_p M$ for all $\lambda\neq 0$ and $\left\| \lambda^3 \Phi_p \right\|_{\lambda}=\left\|\Phi_p\right\|_1=\left\|\Phi_p\right\|_{\varphi_p}$ for all $\lambda\neq 0$, where $\left\|\cdot\right\|_{\lambda}$ is the norm on $T_p M$ induced by $\sigma_{\lambda}$. Thus,
\begin{equation*}
\left\|(\Psi_{\lambda})_p-\sigma_{\lambda} \right\|_{\lambda}=\left\|\lambda^2 \Phi_p \right\|_{\lambda}=\frac{ \left\|\Phi_p\right\|_{\varphi_p}}{|\lambda|} \leq \frac{\left\|\Phi\right\|_{C^0}}{|\lambda|}<\epsilon_0
\end{equation*}
for all $\left|\lambda\right|>\frac{\left\|\Phi\right\|_{C^0}}{\epsilon_0}$. Hence, Lemma \ref{le:universalconst} shows that $\Psi_{\lambda}$ is the Hodge dual of a $\G_2$-structure on $M$. The assertion follows since $\Psi_{\lambda}$ is closed by construction.
\end{proof}
\begin{remark}\label{re:existenceforG2star}
\begin{itemize}
\item
The condition on the boundedness of $\Phi$ is trivially fulfilled if $\Phi$ is left-invariant or $M$ is compact. Moreover, if the initial $\G_2$-structure $\varphi$, the splitting $E_4\op E_3$ and $\Phi$ are left-invariant, so is the induced cocalibrated $\G_2$-structure.
\item
To prove an analogue of Proposition \ref{pro:split} in the left-invariant case for $\G_2$-structures, we do not need at all a metric. We only need that the orbit of all Hodge duals is open. For the proof, let $\g$ be a seven-dimensional Lie algebra $\g$. The openness of the orbit implies that for any sequence $(A_n)_n$, $A_n\in \GL(\g)$, any Hodge dual $\Psi\in \L^4 \g^*$ and any sequence $(\Phi_n)_n$, $\Phi_n\in \L^4 \g^*$ with $\lim\limits_{n\rightarrow \infty} \Phi_n=0$ there is $N\in \mathbb{N}$ such that for all $n\geq N$ the four-form $\Psi+\Phi_n$ and so also the four-form $A_n^*(\Psi+\Phi_n)$ is a Hodge dual of a $\G_2$-structure. Let now $\varphi\in \L^3 \g^*$ be a $\G_2$-structure and $\g=E_4\op E_3$ be a splitting into a four-dimensional subspace $E_4$ and a three-dimensional subspace $E_3$ such that $\Psi:=\star_{\varphi}\varphi=\Omega_1+\Omega_2$ with $\Omega_1\in \L^4 E_4^*$, $\Omega_2\in \L^2 E_4^*\wedge \L^2 E_3^*$, $d\Omega_1=0$ and such that there exists $\Phi\in \L^3 E_4^*\wedge  E_3^*$ with $d\Omega_2=d\Phi$. Here, we identify, as usual, $E_4^*\cong \Ann{E_3}$ and $E_3^*\cong \Ann{E_4}$ via the decomposition $\g=E_4\op E_3$. Define $A_n\in \GL(\g)$ such that it acts by multiplication with $n$ on $E_4$ and by the identity map on $E_3$ and set $\Phi_n:=-\frac{\Phi}{n}\in \L^3 E_4^*\wedge  E_3^*$. Then our previous considerations show that
\begin{equation*}
\Psi_n:=A_n^*(\star_{\varphi}\varphi+\Phi_n)=A_n^*\left(\Omega_1+\Omega_2-\frac{\Phi}{n}\right)=n^4 \Omega_1+n^2 \Omega_2-n^2 \Phi
\end{equation*}
is, for $n$ large enough, a Hodge dual of a $\G_2$-structure. Moreover, our assumptions imply that it is closed and so defines a cocalibrated $\G_2$-structure on $\g$. Note that we can literally transfer the proof to so-called cocalibrated $\G_2^*$-structures on Lie algebras and prove the analogous result for these structures.
\end{itemize}
\end{remark}
We apply Proposition \ref{pro:split} to the left-invariant case:
\begin{proposition}\label{pro:existence}
Let $\g=\g_4\op \g_3$ be a seven-dimensional Lie algebra which is the Lie algebra direct sum of a four-dimensional Lie algebra $\g_4$ and of a three-dimensional Lie algebra $\g_3$.
\begin{enumerate}
\item
If $\g_3$ is unimodular and there exists a $D:=h^2(\g_3)$-dimensional subspace $W$ of $\L^2 \g_4^*$ such that each non-zero element in $W$ is a symplectic two-form, then $\g$ admits a cocalibrated $\G_2$-structure.
\item
Let $\g_4\in\{A_{4,12},\mathfrak{r}_2\op\mathfrak{r}_2\}$. If $\g_3$ admits a contact-form $\alpha$, then $\g$ admits a cocalibrated $\G_2$-structure.
\item
If $\g_4$ is unimodular, admits a codimension one ideal $\uf$ isomorphic to $\mathfrak{h}_3$, $\g_3$ is not unimodular and $h^1(\g_4)+h^1(\g_3)-h^2(\g_4)\geq 2$, then $\g$ admits a cocalibrated $\G_2$-structure.
\end{enumerate}
\end{proposition}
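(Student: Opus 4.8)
The plan is to prove all three parts by the same mechanism: in each case one produces a left‑invariant $\G_2$-structure $\varphi$ on $\g$ together with an adapted splitting $\g=E_4\op E_3$ meeting the hypotheses of Proposition~\ref{pro:split}, the Hodge dual being written via Proposition~\ref{pro:Hodgedualbytwoforms} as $\star_{\varphi}\varphi=\tfrac12\omega_1^2+\sum_{i=1}^3\omega_i\wedge\nu_i$ with $\omega_i\in\L^2E_4^*$ and $\nu_1,\nu_2,\nu_3$ a basis of $\L^2E_3^*$, so that $\Omega_1=\tfrac12\omega_1^2\in\L^4E_4^*$ and $\Omega_2=\sum_i\omega_i\wedge\nu_i\in\L^2E_4^*\wedge\L^2E_3^*$. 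It then remains to check $d_{\g}\Omega_1=0$ and to exhibit a left‑invariant (hence bounded) four‑form $\Phi\in\L^3\Ann{E_3}\wedge\Ann{E_4}$ with $d_{\g}\Phi=d_{\g}\Omega_2$; the cocalibrated $\G_2$-structure is then given by Proposition~\ref{pro:split} (cf.\ also Remark~\ref{re:existenceforG2star}). In every case the construction of $\Phi$ is the heart of the matter, and the respective hypotheses are precisely what force $\Omega_2$ to differ from a closed form by the differential of a four‑form of the prescribed type.

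\emph{Part (a).} Take $E_4=\g_4$, $E_3=\g_3$. A symplectic two‑form is by definition closed, so $W$ consists of closed non‑degenerate two‑forms on $\g_4$; by Lemma~\ref{le:subspaceslengthtwo} the Gram matrix of a basis $\omega_1,\dots,\omega_D$ of $W$ with respect to a fixed volume on $\g_4$ is definite, so these $\omega_i$ satisfy the hypothesis of Proposition~\ref{pro:Hodgedualbytwoforms} with $k=D$. Since $\g_3$ is three‑dimensional and unimodular, all its two‑forms are closed and $h^1(\g_3)=h^2(\g_3)=D$ by Poincar\'e duality; hence $d_{\g_3}(\g_3^*)$ has dimension $3-D$, and one may pick $\beta_1,\dots,\beta_{3-D}\in\g_3^*$ so that, together with cohomology representatives $\nu_1,\dots,\nu_D$, the forms $\nu_{D+j}:=d_{\g_3}\beta_j$ complete to a basis $\nu_1,\dots,\nu_3$ of $\L^2\g_3^*$. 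With $\Psi=\tfrac12\omega_1^2+\sum_{i=1}^3\omega_i\wedge\nu_i$ a Hodge dual with adapted splitting $\g_4\op\g_3$ (Proposition~\ref{pro:Hodgedualbytwoforms}), $\Omega_1=\tfrac12\omega_1^2$ is $d_{\g}$-closed as a top‑degree form on the ideal $\g_4$, and, since all $\nu_i$ and all $\omega_i$ with $i\le D$ are closed,
\begin{equation*}
d_{\g}\Omega_2=\sum_{i=1}^3 d_{\g_4}\omega_i\wedge\nu_i=\sum_{j=1}^{3-D} d_{\g_4}\omega_{D+j}\wedge d_{\g_3}\beta_j=-\,d_{\g}\!\Big(\sum_{j=1}^{3-D}(d_{\g_4}\omega_{D+j})\wedge\beta_j\Big),
\end{equation*}
so $\Phi:=-\sum_{j}(d_{\g_4}\omega_{D+j})\wedge\beta_j\in\L^3\g_4^*\wedge\g_3^*=\L^3\Ann{\g_3}\wedge\Ann{\g_4}$ works, and Proposition~\ref{pro:split} applies (for $D=0$ the same argument runs, all $\nu_i$ being exact).

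\emph{Part (b).} As $A_{4,12}$ and $\mathfrak{r}_2\op\mathfrak{r}_2$ carry only one‑dimensional subspaces of symplectic two‑forms, the splitting $\g_4\op\g_3$ does not suffice and one mixes the two factors. Both algebras contain a two‑plane $Q=\spa{e_3,e_4}$ with $e^{34}$ $d_{\g}$-closed and non‑degenerate on $Q$; on the $\g_3$-side, replacing the direction transverse to $\ker\alpha$ by the Reeb vector $R$ of $\alpha$ — so that $d_{\g}\alpha=\rho\in\L^2(\ker\alpha)^*$ with $\rho\ne 0$, which is where the contact hypothesis enters — the plane $K=\ker\alpha$ carries the non‑degenerate two‑form $\rho$. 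Taking $E_4=Q\op K$, $E_3=\spa{e_1,e_2}\op\spa{R}$ and $\omega_1=e^{34}+\rho$, the form $\omega_1$ is closed and non‑degenerate on $E_4$, so $\Omega_1=\tfrac12\omega_1^2=e^{34}\wedge\rho\in\L^4E_4^*$ is $d_{\g}$-closed; $\omega_2,\omega_3$ are then supplied by Proposition~\ref{pro:Hodgedualbytwoforms} (with $k=1$), and $\Phi$ is built by an explicit computation with the structure equations of $\g_4$ and $\g_3$, using the freedom in the choice of the basis $\nu_1,\nu_2,\nu_3$ of $\L^2E_3^*$ to kill the components of $d_{\g}\Omega_2$ of the wrong type.

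\emph{Part (c).} By Lemma~\ref{le:4dbasis}, write $\g_4^*=\spa{e^1}\op V_2\op\spa{e^4}$ with $V_2=\spa{e^2,e^3}$, $d_{\g}e^1=e^{23}$ (so $\tr F=0$ by unimodularity), $d_{\g}\alpha=F(\alpha)\wedge e^4$ on $V_2$, $d_{\g}e^4=0$, and by Lemma~\ref{le:3d} write $\g_3^*=W_2\op\spa{f^3}$ with $d_{\g}f^3=0$ and $W_2$ dual to the unimodular kernel $\tilde{\uf}$ of $\g_3$. Take $E_4=\uf\op\spa{X_0}$ with $X_0$ transverse to $\tilde{\uf}$ (so $E_3=\spa{e_4}\op\tilde{\uf}$ and $E_4^*=\spa{e^1,e^2,e^3,f^3}$) and $\omega_1=e^{23}+e^1\wedge f^3$; then $\Omega_1=\tfrac12\omega_1^2=e^1\wedge e^2\wedge e^3\wedge f^3\in\L^4E_4^*$ is $d_{\g}$-closed because $e^{123}$ is the top form of the unimodular ideal $\uf$ and the action of $e_4$ on $\L^3\uf^*$ is trace‑free, hence trivial, while $d_{\g}f^3=0$. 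Again $\omega_2,\omega_3$ come from Proposition~\ref{pro:Hodgedualbytwoforms} and $\Phi$ is built by hand. The hypothesis $h^1(\g_4)+h^1(\g_3)-h^2(\g_4)\ge 2$ — equivalently, since $\sum_i(-1)^i h^i(\g_4)=0$ and $\g_4$ unimodular gives $h^2(\g_4)=2h^1(\g_4)-2$, the inequality $h^1(\g_3)\ge h^1(\g_4)$, i.e.\ $\dim{\ker f}\ge\dim{\ker F}$ — is exactly what guarantees that the $e^4$-components of $d_{\g}\Omega_2$ produced by $F$ can be compensated, through a form in $\L^3\Ann{E_3}\wedge\Ann{E_4}$, by the $f^3$-exact directions produced by $f$. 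Carrying out this compensation (and the analogous bookkeeping in part (b)) is the step I expect to be the main obstacle.
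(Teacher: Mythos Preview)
Your argument for (a) is complete and coincides with the paper's: same splitting $E_4=\g_4$, $E_3=\g_3$, same ordering of the $\nu_i$ so that the last $3-D$ are exact, and the same $\Phi=-\sum_j d\omega_{D+j}\wedge\beta_j$.

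In (b) and (c) you set up the splitting and $\omega_1$ but stop before producing $\Phi$, and this is a genuine gap rather than routine bookkeeping, because the existence of $\Phi\in\L^3\Ann{E_3}\wedge\Ann{E_4}$ with $d\Phi=d\Omega_2$ depends on the particular $\omega_2,\omega_3,\nu_i$ chosen, not just on their existence via Proposition~\ref{pro:Hodgedualbytwoforms}. For (c) your splitting and $\omega_1$ agree with the paper's, and your rewriting of the hypothesis as $\dim\ker G\ge\dim\ker F$ is correct and exactly what the paper derives; but the paper does not take $\omega_2,\omega_3$ generically. It fixes $\omega_2=e^7\wedge\alpha_2-e^1\wedge\alpha_1$, $\omega_3=e^7\wedge\alpha_1+e^1\wedge\alpha_2$ and $\nu_1=\beta_1\wedge\beta_2$, $\nu_2=\beta_1\wedge e^4$, $\nu_3=-\beta_2\wedge e^4$, with the basis $\beta_1,\beta_2$ of $W_2$ chosen so that $\beta_j\in\ker G$ for $j>\rk F$ (this is where $\dim\ker G\ge\dim\ker F$ enters). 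With these choices one checks $d(\omega_1\wedge\nu_1)=0$ directly, and the remaining part of $d\Omega_2$ is exactly $d\bigl(-\sum_{i\le\rk F}\gamma_i\wedge e^1\wedge G(\beta_i)\wedge e^7\bigr)$ with $F(\gamma_i)=\alpha_i$; this is the $\Phi$ you were looking for.

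For (b) your splitting $E_4=\spa{e_3,e_4}\oplus\ker\alpha$ is \emph{different} from the paper's, which takes $V_4^*=\spa{e^4}\oplus\g_3^*$, $V_3^*=\spa{e^1,e^2,e^3}$ and $\omega_1=2e^4\wedge\alpha-d\alpha$. The paper's choice is tailored to the structure equations $de^1=e^{14}+e^{23}$, $de^2=e^{24}\mp e^{13}$: with $\nu_1=e^{12}$ one gets $d(\omega_1\wedge e^{12})=0$ immediately, and for generic $\omega_2,\omega_3$ (here the paper \emph{does} use Proposition~\ref{pro:Hodgedualbytwoforms} only for existence) the identities $d(e^{134})=d(\mp e^{24})$, $d(e^{234})=d(-e^{14})$ let one rewrite $d(\omega_2\wedge e^{13}+\omega_3\wedge e^{23})$ as $d(e^1\wedge\rho_1\mp e^2\wedge\rho_2)$ with $\rho_i\in\L^3V_4^*$. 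With your splitting, $de^1,de^2$ land in $E_3^*\wedge E_4^*$ while $d\alpha\in\L^2E_4^*$, so the analogous cancellation is not automatic; your route may close, but you would have to carry out the computation to see it.
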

\begin{proof}
\begin{enumerate}
\item
Choose a basis $\nu_1,\nu_2,\nu_3$ of $\L^2 \g_3^*$ such that $\nu_{D+1}=d\alpha_{D+1},\ldots,\nu_3=d\alpha_3$ is a basis of $d(\g_3^*)$, $\alpha_{D+1},\ldots,\alpha_3\in  \g_3^*$. Note that there are $3-D$ exact two-forms on $\g_3$ since the unimodularity of $\g_3$ is equivalent to the closure of all two-forms on $\g_3$. Furthermore, choose a basis $\omega_1,\ldots,\omega_D$ of $W$. Then Lemma \ref{le:subspaceslengthtwo} and Proposition \ref{pro:Hodgedualbytwoforms} imply that there exist two-forms $\omega_{D+1},\ldots,\omega_3\in \L^2 \g_4^*$ such that 
\begin{equation*}
\Psi:=\sum_{i=1}^3 \omega_i\wedge \nu_i+\frac{1}{2}\omega_1^2
\end{equation*}
is the Hodge dual of a $\G_2$-structure with adapted splitting $\g=\g_4\op \g_3$. Since $d(\L^2 \g_3^*)=0$, the identity $d(\sum_{i=1}^3 \omega_i\wedge \nu_i)=d(-\sum_{i=D+1}^3 d\omega_i\wedge \alpha_i)$ is true and $\sum_{i=D+1}^3 d\omega_i\wedge \alpha_i\in \L^3 \g_4^*\wedge  \g_3^*$. Hence, Proposition \ref{pro:split} implies the result.
\item
Let $e^1,e^2,e^3,e^4$ be a basis of $\g_4^*\in \{A_{4,12}^*,(\mathfrak{r}_2\op \mathfrak{r}_2)^*\}$ as in Table \ref{table:4d}, i.e. $de^1=e^{14}+e^{23}$, $de^2=e^{24}-\epsilon e^{13}$, $de^3=0=de^4$, where $\epsilon=1$ if $\g_4=A_{4,12}$ and $\epsilon=-1$ if $\g_4=\mathfrak{r}_2\op \mathfrak{r}_2$. Set $V_4^*:=\spa{e^4}\op \g_3^*$, $V_3^*:=\spa{e^1,e^2,e^3}$. Let $\alpha_1\in \g_3^*$ be a contact form and set $\omega_1:=2e^4\wedge \alpha_1-d\alpha_1\in \L^2 V_4^*$. Then $\omega_1^2\neq 0$ and $d\left(\frac{1}{2}\omega_1^2\right)=0$. Hence, if we set $\nu_1:=e^{12}$, $\nu_2:=e^{13}$, $\nu_3:=e^{23}$, Proposition \ref{pro:Hodgedualbytwoforms} implies the existence of two-forms $\omega_2,\omega_3\in \L^2 V_4^*$ such that
\begin{equation*}
\Psi:=\sum_{i=1}^3 \omega_i\wedge \nu_i+ \frac{1}{2}\omega_1^2
\end{equation*}
is the Hodge dual of a $\G_2$-structure with adapted splitting $\g=V_4\op V_3$. Decompose $\omega_i=e^4\wedge \alpha_i+\theta_i$ with $\alpha_i\in \g_3^*$, $\theta_i\in \L^2 \g_3^*$ for $i=2,3$. Then $d(\omega_1\wedge \nu_1)= d(2e^4\wedge \alpha_1\wedge e^{12}-d\alpha_1\wedge e^{12})=0$ and so
the differential of the four-form $\sum_{i=1}^3 \omega_i\wedge \nu_i$ is given by
\begin{equation*}
\begin{split}
d\left(\sum_{i=1}^3 \omega_i\wedge \nu_i \right)&=0+d(e^{134}\wedge \alpha_2+e^{234}\wedge \alpha_3)+d(e^{13}\wedge \theta_2+e^{23}\wedge \theta_3)\\
&=d(\epsilon e^{24}\wedge d\alpha_2-e^{14}\wedge d\alpha_3)\\
&\,\,\,\,\,\,+d(\epsilon(e^{24}\wedge \theta_2-e^2\wedge d\theta_2)-e^{14}\wedge \theta_3+e^1\wedge d\theta_3) \\
&=d(e^1\wedge \rho_1-\epsilon e^{2}\wedge \rho_2).
\end{split}
\end{equation*}
with $\rho_1:=-e^4\wedge (d\alpha_3+\theta_3)+d\theta_3 \in \Lambda^3 V_4^*,\, \rho_2:=-e^4\wedge (d\alpha_2+\theta_2)+d\theta_2 \in \Lambda^3 V_4^*$. Since $e^1\wedge \rho_1-\epsilon e^{2}\wedge \rho_2$ is in $ V_3^*\wedge \L^3 V_4^*$, Proposition \ref{pro:split} implies the result.
\item
By Lemma \ref{le:4dbasis} we may decompose $\g_4^*$ into $\spa{e^1}\op V_2\op \spa{e^4}$ for $e^1,e^4\in \g_4^*$ and a two-dimensional subspace $V_2$ such that $0\neq de^1\in \L^2 V_2$, $d\alpha=F(\alpha)\wedge e^4$ for all $\alpha\in V_2$, $F:V_2\rightarrow V_2$ a trace-free linear map, and $de^4=0$. Moreover, by Lemma \ref{le:3d} we may decompose $\g_3^*=W_2\op \spa{e^7}$ with $e^7\in \g_3^*$ and a two-dimensional subspace $W_2$ such that $d\beta=G(\beta)\wedge e^7$ for all $\beta\in W_2$, $G:W_2\rightarrow W_2$ a linear map which is not trace-free, and $de^7=0$. By rescaling $e^7$ we may assume that $\tr(G)=1$.

We have $\ker d|_{\L^2 \g_4^*}=\L^2 V_2\oplus V_2\wedge e^4\oplus \ker(F)\wedge e^1$. Thus, the identity
\begin{equation*}
2-\rk{F}+3=\dim{\ker(F)}+3=\dim{\ker d|_{\L^2 \g_4^*}}=h^2(\g_4)+4-h^1(\g_4)
\end{equation*}
is true. Moreover, $\dim{\ker G}=h^1(\g_3)-1$ and so the condition in the statement is equivalent to $\dim{\ker G}\geq 2-\rk{F}$. Hence, we may choose a basis $\alpha_1,\alpha_2$ of $V_2$, elements $\gamma_i\in V_2$, $1\leq i\leq \rk{F}$, and a basis $\beta_1,\beta_2$ of $W_2$ such that $de^1=\alpha_1\wedge\alpha_2$, such that $\alpha_i=F(\gamma_i)$, $1\leq i\leq \rk{F}$, is a basis of $F(V_2)$ and such that $\spa{\beta_j|\rk{F}+1\leq j\leq 2}$ is a subspace of $\ker G$. Set $V_4^*:=\spa{e^1}\op V_2\op \spa{e^7}$, $V_3^*:=W_2\op \spa{e^4}$ and
\begin{equation*}
\begin{split}
\nu_1:=& \beta_1\wedge \beta_2,\quad \nu_2:= \beta_1\wedge e^4,\quad \nu_3:=-\beta_2\wedge e^4,\\
\omega_1:=& e^{71}-de^1=e^{71}-\alpha_1\wedge \alpha_2,\quad \omega_2:= e^7\wedge  \alpha_2-e^1\wedge \alpha_1,\\
\omega_3:= & e^7\wedge \alpha_1+e^1 \wedge \alpha_2.
\end{split}
\end{equation*}
Since $\nu_1,\nu_2,\nu_3$ is a basis of $\L^2 V_3^*$, Proposition \ref{pro:Hodgedualbytwoforms} implies that
\begin{equation*}
\Psi:=\sum_{i=1}^3 \omega_i\wedge \nu_i + \frac{1}{2}\omega_1^2
\end{equation*}
is the Hodge dual of a $\G_2$-structure with adapted splitting $V_4\op V_3$. Moreover,
\begin{equation*}
\begin{split}
d(\omega_1\wedge \nu_1)& = d(e^{71}\wedge \beta_1\wedge \beta_2-de^1\wedge \beta_1\wedge \beta_2)\\
&=-e^{7}\wedge de^1\wedge\beta_1\wedge \beta_2+\tr(G) de^1\wedge \beta_1\wedge \beta_2\wedge e^7=0
\end{split}
\end{equation*}
and so
\begin{equation*}
\begin{split}
d\left(\sum_{i=1}^3 \omega_i\wedge \nu_i \right)&=d\left(-\sum_{i=1}^2 e^1\wedge \alpha_i\wedge \beta_i \wedge e^4\right)=-\sum_{i=1}^{\rk{F}} F(\gamma_i)\wedge e^4\wedge e^1\wedge G(\beta_i)\wedge e^{7}\\
&=d\left(-\sum_{i=1}^{\rk{F}} \gamma_i \wedge e^1 \wedge G(\beta_i )\wedge e^{7}\right).
\end{split}
\end{equation*}
But $-\sum_{i=1}^{\rk{F}} \gamma_i \wedge e^1 \wedge G(\beta_i)\wedge e^{7}$ is in $V_3^*\wedge \L^3 V_4^*$. Since $F$ is trace-free, $d(\L^4 V_4^*)=\{0\}$ and again Proposition \ref{pro:split} implies the result.
\end{enumerate}
\end{proof}
\begin{remark}
The following generalization of Proposition \ref{pro:existence} (a) follows from Proposition \ref{pro:split} using Lemma \ref{le:subspaceslengthtwo}:\\
Let $M=N\times G$ be a seven-dimensional manifold such that $N$ is a four-dimensional compact Riemannian manifold with trivial bundle of self-dual two-forms and such that $G$ is a unimodular three-dimensional Lie group. If $N$ admits $D:=h^2(\g)$ ($\g$ being the Lie algebra of $G$) self-dual, closed two-forms $\omega_i\in \Omega^2 N$ such that $\omega_i\wedge \omega_j=0$  and $\omega_i^2=\omega_j^2$ for $i\neq j$, then $M$ admits a cocalibrated $\G_2$-structure which is invariant under the left-action of $G$ on $M=N\times G$ given by left-translation on the second factor.
\end{remark}
$D=0$ is allowed in Proposition \ref{pro:existence} (a). Since each non-solvable four-dimensional Lie algebra $\g$ is a Lie algebra direct sum $\g=\h\op \bR$ with $\h\in\{\so(3),\so(2,1)\}$, $h^2(\so(3))=h^2(\so(2,1))=0$ and $\so(3),\so(2,1)$ are the only three-dimensional non-solvable Lie algebras, we get
\begin{corollary}\label{co:nonsolvable}
Let $\g=\g_4\op \g_3$ be a seven-dimensional Lie algebra which is the Lie algebra direct sum of a four-dimensional Lie algebra $\g_4$ and of a three-dimensional Lie algebra $\g_3$. If $\g$ is not solvable, then $\g$ admits a cocalibrated $\G_2$-structure.
\end{corollary}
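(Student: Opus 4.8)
The plan is to reduce the statement directly to Proposition \ref{pro:existence}(a) with $D=0$, after rewriting the direct sum so that its three-dimensional summand is semisimple.

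First I would observe that, since $\g=\g_4\op\g_3$ is not solvable, at least one of the two factors fails to be solvable. If $\g_3$ is non-solvable, then by the remark after Lemma \ref{le:3d} it is isomorphic to $\so(3)$ or $\so(2,1)$, and I set $\g_3':=\g_3$ and $\g_4':=\g_4$. If $\g_3$ is solvable, then $\g_4$ must be non-solvable; by the discussion in Subsection \ref{subsec:4dLA} the only non-solvable four-dimensional Lie algebras are $\so(3)\op\bR$ and $\so(2,1)\op\bR$, so $\g_4=\h\op\bR$ with $\h\in\{\so(3),\so(2,1)\}$, and I reassociate $\g=\h\op(\bR\op\g_3)$, setting $\g_3':=\h$ and $\g_4':=\bR\op\g_3$. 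In either case $\g=\g_4'\op\g_3'$ is a Lie algebra direct sum of a four-dimensional Lie algebra $\g_4'$ and a three-dimensional Lie algebra $\g_3'\in\{\so(3),\so(2,1)\}$.

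Next I would record the two facts needed about $\g_3'$: it is unimodular (a semisimple Lie algebra is unimodular, as $\tr(\ad_x)=0$ for all $x$), and $h^2(\g_3')=0$ (Whitehead's lemma, or a direct look at the structure equations of $\so(3)$ and $\so(2,1)$ in Table \ref{table:3d}). Consequently $D:=h^2(\g_3')=0$, and the hypothesis of Proposition \ref{pro:existence}(a) concerning the existence of a $D$-dimensional subspace $W\subseteq\L^2(\g_4')^*$ all of whose non-zero elements are symplectic is then vacuously fulfilled by $W=\{0\}$. Proposition \ref{pro:existence}(a) applied to $\g=\g_4'\op\g_3'$ therefore produces a cocalibrated $\G_2$-structure on $\g$, as claimed.

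The only point requiring a little care is the bookkeeping in the first step: when the non-solvable factor is the four-dimensional one, one must notice that the $\bR$-summand of $\g_4=\h\op\bR$ can be moved across the direct sum so that the semisimple part becomes the three-dimensional factor. Once that reassociation is made, the corollary is an immediate application of Proposition \ref{pro:existence}(a) in the degenerate case $D=0$, so I do not expect a genuine obstacle.
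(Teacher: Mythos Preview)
Your proposal is correct and follows essentially the same approach as the paper: reassociate so that the three-dimensional summand is $\so(3)$ or $\so(2,1)$, note that this summand is unimodular with $h^2=0$, and apply Proposition~\ref{pro:existence}(a) in the degenerate case $D=0$. Your write-up is in fact more explicit than the paper's, which compresses the argument into a single sentence preceding the corollary.
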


\subsection{Obstructions}\label{sec:obstructions}\label{subsec:obstructions}
In this section, we derive obstructions to the existence of cocalibrated $\G_2$-structures on Lie algebras, which we use in subsections \ref{subsec:firstcase} - \ref{subsec:fourthcase} to prove Theorem \ref{th:main}. 

We start with
\begin{proposition}\label{pro:obstruct3dunimod}
Let $\g=\g_4\op \g_3$ be a seven-dimensional Lie algebra which is the Lie algebra direct sum of a four-dimensional Lie algebra $\g_4$ and of a three-dimensional unimodular Lie algebra $\g_3$ such that $\g_4$ admits a unique unimodular ideal $\uf$ of codimension one. If $\g$ admits a cocalibrated $\G_2$-structure, then
\begin{equation*}
h^1(\mathfrak{g}_4)+h^1(\mathfrak{u})-h^2(\mathfrak{g}_4)+h^2(\mathfrak{g}_3)\leq 4.
\end{equation*}
\end{proposition}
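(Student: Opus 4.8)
The plan is to derive the claimed inequality from the existence of a closed four-form $\Psi = \star_\varphi \varphi$ that is the Hodge dual of a $\G_2$-structure on $\g$, by exploiting the algebraic invariants of Subsection~\ref{subsec:alginv} together with the structural description of $\g_4$ coming from its codimension one unimodular ideal $\uf$ (Lemma~\ref{le:codimensiononeideal}) and of $\g_3$ (Lemma~\ref{le:Unimodular3d}). First I would fix a generator $e^7$ of $\Ann{\uf_3}$-type data on the $\g_3$ side together with a complement, and an element $e_4 \in \g_4 \setminus \uf$ with dual $e^4$; using the Conventions and Lemma~\ref{le:codimensiononeideal}, the differential of $\g$ is then controlled by the endomorphism $f$ of $\uf^*$ and the differential of $\g_3$. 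The idea is that $0 = d\Psi$ forces a large chunk of $\Psi$ to live in a small space of forms, and then Lemma~\ref{le:alginv1} (the invariants of $\star_\varphi\varphi$ restricted to a hyperplane are forced to be $(6,3,1,2)$ in rank/length/$r$/$m$) pins down how big that space can be.

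Concretely, I would write $\g = V_4 \oplus V_3$ using the unique codimension one unimodular ideal $\uf$ of $\g_4$: set $V_3 := \uf$-complement inside $\g_4$ together with nothing, or rather think of $\g_4 = \uf \oplus \spa{e_4}$ and $\g = \uf \oplus \spa{e_4} \oplus \g_3$, so $e^4$ is closed and $\g_3$ is unimodular hence all its $2$-forms are closed. Contracting $\Psi$ with $e_4$ gives a $3$-form $\rho := (e_4 \hook \Psi)|_W$ on a $6$-dimensional complement $W$, which by Lemma~\ref{le:alginv1}(b) has invariants $(\rk,l,r,m) = (6,3,2,2)$, and $\Omega := \Psi|_W$ has invariants $(6,3,1,2)$. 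The closedness $d\Psi = 0$, decomposed along $\Ann{\uf} = \spa{e^4} \oplus \g_3^*$ and using Lemma~\ref{le:codimensiononeideal}(b)--(d) (which says $d$ annihilates $\L^2\uf^* \wedge e^4$-type pieces and maps $\L^2\uf^*$ into $\L^2\uf^*\wedge e^4$), should force the ``closed part'' of $\Psi$ — the portion lying in $\ker d_\g|_{\L^4 \g^*}$ coming from the $\g_3$-directions and the closed $2$-forms on $\uf$ — to be squeezed; counting dimensions, the space of closed $4$-forms available has dimension governed precisely by $h^2(\g_4) + h^1(\uf)$ and $h^2(\g_3)$, and the openness/invariant constraints say $\Psi$ can only use up to $4$ ``free'' directions beyond what it is forced to use. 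I expect the bookkeeping to produce exactly $h^1(\g_4) + h^1(\uf) - h^2(\g_4) + h^2(\g_3) \le 4$.

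More precisely, I would argue as follows. Choose a complement $V$ of $\ker d_\uf|_{\uf^*}$ in $\uf^*$ as in the proof of Lemma~\ref{le:symplect}; since $\g_4$ is not unimodular exactly when $V \ne \{0\}$, and here $\uf$ is the unimodular kernel, $\dim V = h^1(\uf) - h^1(\g_4) + $ (codimension correction). Then I would identify a subspace of $\L^2 \uf^* \oplus \uf^*\wedge e^4$ consisting of closed $2$-forms of dimension $h^2(\g_4) + 4 - h^1(\g_4)$ (this is the content of Step~III of the proof of Lemma~\ref{le:symplect}), and similarly all $h^2(\g_3) + 3 - h^1(\g_3) = h^2(\g_3)$ closed $2$-forms on $\g_3$ are actually all of $\L^2\g_3^*$ minus exact ones. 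Now contracting the closed $\Psi$ with a suitable $e_4$ and restricting to $W = \uf \oplus \g_3$, the form $\Psi|_W$ must decompose as $\Omega_1 + \Omega_2$ with $\Omega_1 \in \L^4\uf^*$ and $\Omega_2 \in \L^2\uf^* \wedge \L^2\g_3^*$ — but $\uf$ is $3$-dimensional so $\L^4\uf^* = 0$, forcing $\Psi|_W \in \L^2\uf^*\wedge \L^2\g_3^*$, and by Lemma~\ref{le:alginv1}(c) this $4$-form has $l = 3$, i.e. it pairs nondegenerately. Applying Lemma~\ref{le:alginv2}(a) with $V_3 = \uf$ and $W_3 = \g_3$: the $\L^2\uf^*\wedge\L^2\g_3^*$-component of $\Psi|_W$ has length $3$; writing it as $\sum_{i=1}^3 \omega_i \wedge \nu_i$ with $\{\nu_i\}$ a basis of $\L^2\g_3^*$, the $\omega_i$ span all of $\L^2\uf^*$ (which is $3$-dimensional) and in fact the $3\times 3$ pairing matrix $(\omega_i\wedge\omega_j)$ must be definite by Lemma~\ref{le:subspaceslengthtwo}. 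The closedness of $\Psi$ then imposes that the exact $2$-forms among the $\nu_i$ and the non-closed pieces of the $\omega_i$ are paired compatibly with $d$; counting the dimension of the space of $4$-forms $\sum \omega_i\wedge\nu_i$ that are closed and comparing with the rank constraints yields the inequality. \textbf{The main obstacle} I anticipate is the dimension bookkeeping: carefully matching the $d_\g$-closedness condition on $\Psi$ against the decomposition $\g^* = \uf^* \oplus \spa{e^4} \oplus \g_3^*$ and turning ``$\Psi$ uses at most $4$ extra directions'' into the precise arithmetic relation $h^1(\g_4)+h^1(\uf)-h^2(\g_4)+h^2(\g_3) \le 4$ — in particular getting the signs and the constant $4$ (which presumably comes from $\dim\L^2\uf^* = 3$ plus one more closed direction, or from the $7 = 4+3$ splitting and the constraint $m = 2$) exactly right, rather than off by one. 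The rank/length computations themselves (via Lemma~\ref{le:alginv1} and Lemma~\ref{le:alginv2}) are the clean part; the delicate step is translating the cohomological quantities $h^i(\g_4), h^i(\uf), h^i(\g_3)$ into dimensions of the concrete spaces of closed forms appearing in the decomposition of $\Psi$, and then checking that a closed Hodge dual cannot ``fit'' when that total exceeds the bound.
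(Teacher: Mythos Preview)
Your overall strategy is right---decompose $\Psi$ along $\g^*=\uf^*\oplus\spa{e^4}\oplus\g_3^*$, use the length/rank invariants of Lemma~\ref{le:alginv1} and Lemma~\ref{le:alginv2} on the restriction to a hyperplane, and convert this into a dimension count on closed two-forms---but there are two genuine gaps.

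First, your claim that ``$\uf$ is $3$-dimensional so $\L^4\uf^*=0$, forcing $\Psi|_W\in\L^2\uf^*\wedge\L^2\g_3^*$'' is false: the restriction $\Omega:=\Psi|_{\uf\oplus\g_3}$ decomposes as $\Omega^{3,1,0}+\Omega^{2,2,0}+\Omega^{1,3,0}$ (in the paper's tri-grading $\L^{i,j,k}=\L^i\uf^*\wedge\L^j\g_3^*\wedge\L^k\spa{e^4}$), and the outer components $\Omega^{3,1,0}$ and $\Omega^{1,3,0}$ are not a priori zero. Lemma~\ref{le:alginv2}(a) only lets you conclude $l(\Omega^{2,2,0})=3$ once one of them vanishes. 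This is exactly where the hypothesis ``$\uf$ is the \emph{unique} codimension one unimodular ideal'' enters, via Lemma~\ref{le:4dcodim1}: uniqueness means either $\g_4$ is non-unimodular (so $d|_{\L^3\uf^*}$ is injective and the closedness of $\Psi$ kills $\Omega^{3,1,0}$) or $\dim{[\g_4,\g_4]}=3$ (so $d|_{\uf^*}$ is injective and closedness kills $\Omega^{1,3,0}$). You never invoke uniqueness, and without it the argument does not start.

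Second, your endgame is on the wrong space. You write the $(2,2)$-part as $\sum\omega_i\wedge\nu_i$ with $\omega_i\in\L^2\uf^*$ and then assert the matrix $(\omega_i\wedge\omega_j)$ is definite via Lemma~\ref{le:subspaceslengthtwo}. But $\L^4\uf^*=0$, so $\omega_i\wedge\omega_j\equiv 0$; Lemma~\ref{le:subspaceslengthtwo} concerns two-forms on a \emph{four}-dimensional space. The paper's resolution is to bring the $e^4$-factor back in: one studies $\Omega^{2,2,0}+\rho^{1,2,0}\wedge e^4\in\L^2\g_4^*\wedge\L^2\g_3^*$ (where $\Psi=\Omega+\rho\wedge e^4$), and the $(2,2,1)$-component of $d\Psi=0$ shows that after projecting $\L^2\g_3^*\to H^2(\g_3)$ this element is $(d_{\g_4}\otimes\id)$-closed. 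Writing it as $\sum_{i=1}^{D}\omega_i\otimes\nu_i$ with $D=h^2(\g_3)$ and $\nu_i$ a basis of $H^2(\g_3)$ produces $D$ \emph{closed} two-forms $\omega_i\in\L^2\g_4^*$; and because $l(\Omega^{2,2,0})=3$ one checks their $\L^2\uf^*$-parts are independent, so $\spa{\omega_1,\dots,\omega_D}\cap\bigl(\ker d_\uf|_{\uf^*}\wedge e^4\bigr)=\{0\}$. Since $\dim\ker d|_{\L^2\g_4^*}=h^2(\g_4)+4-h^1(\g_4)$ and $\dim\bigl(\ker d_\uf|_{\uf^*}\wedge e^4\bigr)=h^1(\uf)$, the inclusion of these two transverse subspaces gives $h^2(\g_3)+h^1(\uf)\le h^2(\g_4)+4-h^1(\g_4)$, which is the claimed inequality. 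Your ``bookkeeping obstacle'' is thus resolved not by a pure count but by manufacturing closed two-forms on $\g_4$ with independent $\uf^*$-components---and that step needs both the uniqueness of $\uf$ and the projection to $H^2(\g_3)$, neither of which appears in your sketch.
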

\begin{proof}
Let $\Psi$ be the Hodge dual of a cocalibrated $\G_2$-structure. Fix an element $e_4\in \g\backslash\uf$ and let $e^4\in \Ann{\uf}$ be such that $e^4(e_4)=1$. We set
\begin{equation*}
\L^{i,j,k}:=\L^i \uf^*\wedge \L^j \g_3^*\wedge \L^k \spa{e^4}
\end{equation*}
and denote by $\theta^{i,j,k}$ the projection of $\theta$ into $\L^{i,j,k}$ for all $i,j,k\in \mathbb{N}_0$ and all $(i+j+k)$-forms $\theta\in \L^{i+j+k} \g^*$. For the proof, we denote by $d$ the exterior differential on $\g$ and by $d_{\uf}$ the one on $\uf$. Lemma \ref{le:codimensiononeideal} implies the inclusions
\begin{equation*}
d(\L^{i,j,0})\subseteq \L^{i+1,j,0}\op \L^{i,j,1} \op \L^{i,j+1,0},\quad d(\L^{i,j,1})\subseteq \L^{i+1,j,1}\op \L^{i,j+1,1}
\end{equation*}
for all $i,j\in\mathbb{N}_0$ and the unimodularity of $\uf$ and $\g_3$ imply that for all $i\in \mathbb{N}_0$:
\begin{equation*}
\begin{split}
d(\L^{2,i,0}) &\subseteq \L^{2,i,1}\op \L^{2,i+1,0},\,\, d(\L^{2,i,1})\subseteq \L^{2,i+1,1},\\
d(\L^{i,2,0}) & \subseteq \L^{i+1,2,0}\op \L^{i,2,1},\,\, d(\L^{i,2,1})\subseteq \L^{i+1,2,1}.
\end{split}
\end{equation*}

We show that there are $D:=h^2(\g_3)$ linearly independent closed two-forms $\omega_1,\ldots,\omega_D\in \L^2\g_4^*$ such that $\spa{\omega_1,\ldots,\omega_D}\cap\, \L^{1,0,1}=\{0\}$. Note that
$\dim{\ker d|_{\L^{1,0,1}}}=h^1(\uf)$ since $\ker d|_{\L^{1,0,1}}=\ker d_{\uf}|_{ \uf^*} \wedge e^4$ by Lemma \ref{le:codimensiononeideal}. Hence, the existence of such $\omega_1,\ldots,\omega_D\in \L^2 \g_4^*$ implies
\begin{equation*}
\begin{split}
& h^2(\g_4)+4-h^1(\g_4)=\dim {\ker d|_{\L^2 \g^*}}\geq D+h^1(\uf)=h^2(\g_3)+h^1(\uf)\\
&\Leftrightarrow \, h^1(\mathfrak{g}_4)+h^1(\mathfrak{u})+h^2(\mathfrak{g}_3)-h^2(\mathfrak{g}_4)\leq 4.
\end{split}
\end{equation*} 
The two-forms $\omega_1,\ldots,\omega_D\in \L^2 \g_4^*$ will be certain parts of $\Psi^{2,2,0}+ \Psi^{1,2,1}$. Therefore, we decompose $\Psi$ as
\begin{equation*}
\Psi=\Omega+\rho\wedge e^4
\end{equation*}
with $\Omega\in \L^4 (\uf^*\op \g_3^*)$, $\rho\in \L^3 (\uf^*\op \g_3^*)$.

The first step of the proof is to show that the length of $\Omega^{2,2,0}$ is three. For that purpose, note that the identities
\begin{equation*}
0=(d\Psi)^{3,1,1}+(d\Psi)^{3,2,0}=d(\Omega^{3,1,0}), \quad 0=(d\Psi)^{1,3,1}+(d\Psi)^{2,3,0}=d(\Omega^{1,3,0})
\end{equation*}
are true. If $\g_4$ is not unimodular, then $d(\L^{3,0,0})=\L^{3,0,1}$. Hence, $\Omega^{3,1,0}=0$ in this case. If $\dim{[\g_4,\g_4]}=3$, then $d|_{\L^{1,0,0}}$ and so $d|_{\L^{1,3,0}}$ is injective and $\Omega^{1,3,0}=0$ follows. We know from Lemma \ref{le:4dcodim1} that the uniqueness of the unimodular ideal $\uf$ implies that $\g_4$ is not unimodular or $\dim{[\g_4,\g_4]}=3$. In both cases, Lemma \ref{le:alginv2} and the just obtained results show that then $l(\Omega^{2,2,0})=3$.

Next, we look at the $(2,2,1)$-component of $d\Psi$. This component is given by
\begin{equation*}
0=(d\Psi)^{2,2,1}=d(\Omega^{2,2,0})+d(\rho^{2,1,0}\wedge e^4)+d(\rho^{1,2,0}\wedge e^4)
\end{equation*}
Hence, $d(\Omega^{2,2,0}+\rho^{1,2,0}\wedge e^4)=-d(\rho^{2,1,0}\wedge e^4)\in \L^3 \g_4^* \wedge d(\g_3^*)$ and so $d(\Omega^{2,2,0}+\rho^{1,2,0}\wedge e^4)\in d(\L^2 \g_4^*)\wedge d(\g_3^*)$. Let
\begin{equation*}
\pi_k:\L^k \g_4^*\wedge \L^2 \g_3^*\rightarrow (\L^k \g_4^*\wedge \L^2 \g_3^*)/(\L^k \g_4^*\wedge d(\g_3^*))\cong \L^k \g_4^*\otimes H^2(\g_3)
\end{equation*}
be the natural projection for $k\in \mathbb{N}$, where the last canonical isomorphism holds since $\g_3$ is unimodular and so all two-forms on $\g_3$ are closed. Moreover, the identity $\pi_3\circ d=(d\otimes \id) \circ \pi_2$ is true. If we set $\Phi:=\pi_2(\Omega^{2,2,0}+\rho^{1,2,0}\wedge e^4)$, we get the identity
\begin{equation*}
(d\otimes \id)(\Phi)=\pi_3(d(\Omega^{2,2,0}+\rho^{1,2,0}\wedge e^4))=0.
\end{equation*}
Write
\begin{equation*}
\Phi=\sum_{i=1}^{D} \omega_i\otimes \nu_i
\end{equation*}
for $\omega_1,\ldots,\omega_D\in \L^2 \g_4^*$ and some basis $\nu_1,\ldots,\nu_D$ of $H^2(\g_3)$. Then $\omega_1,\ldots,\omega_D$ are all closed. By choosing a complement $V$ of $d(\g_3^*)$ in $\L^2 \g_3^*$, we may identify $\nu_1,\ldots,\nu_D$ with elements in $V$ and get
\begin{equation*}
\Omega^{2,2,0}=\psi+\sum_{i=1}^D \omega_i^{2,0,0}\wedge \nu_i
\end{equation*}
with $\psi\in \L^2 \uf^*\wedge d(\g_3^*)$. Since the length of $\Omega^{2,2,0}$ is three and the length of $\psi$ is at most $\dim{d(\g_3^*)}$, the length of $\sum_{i=1}^D \omega_i^{2,0,0}\wedge \nu_i$ has to be $3-\dim{d(\g_3^*)}=D$ and so $\omega_1^{2,0,0},\ldots,\omega_D^{2,0,0}$ have to be linearly independent. Thus, $\omega_1,\ldots, \omega_D$ are linearly independent and $\spa{\omega_1,\ldots,\omega_D}\cap  \L^{1,0,1}=\{0\}$. This finishes the proof.
\end{proof}
Proposition \ref{pro:obstruct3dunimod} gives us an obstruction if the three-dimensional part is unimodular, whereas the next proposition gives us an obstruction if the three-dimensional part is not unimodular.
\begin{proposition}\label{pro:obstruct3dnotunimod}
\begin{enumerate}
\item
Let $\g=\g_4\op \g_3$ be a seven-dimensional Lie algebra which is the Lie algebra direct sum of an almost Abelian four-dimensional Lie algebra $\g_4$ and of a three-dimensional non-unimodular Lie algebra $\g_3$. If $\g$ admits a cocalibrated $\G_2$-structure, then $\g_4$ is unimodular and $\g_3=\mathfrak{r}_2\op \bR$.
\item
Let $\g=\g_5\op \mathfrak{r}_2$ be a Lie algebra direct sum of a five-dimensional almost Abelian Lie algebra $\g_5$ and of the two-dimensional Lie algebra $\mathfrak{r}_2$. If $\g$ admits a cocalibrated $\G_2$-structure, then $\g_5$ is unimodular.
\end{enumerate}
\end{proposition}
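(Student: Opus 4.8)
\emph{Proof plan.} I would prove (b) first and reduce most of (a) to it. By the classification of three-dimensional Lie algebras, every non-unimodular one other than $\mathfrak{r}_2\op\bR$ has two-dimensional commutator ideal. If $\g_3\cong\mathfrak{r}_2\op\bR$, then $\g=(\g_4\op\bR)\op\mathfrak{r}_2$, and $\g_4\op\bR$ is a five-dimensional almost Abelian Lie algebra (a codimension-one Abelian ideal of $\g_4$ together with the extra $\bR$ is a codimension-one Abelian ideal), so part (b) forces $\g_4\op\bR$, hence $\g_4$, to be unimodular --- the conclusion of (a) in this case. It then remains, for (a), to show that no cocalibrated $\G_2$-structure exists when $\g_4$ is almost Abelian and $\g_3$ is non-unimodular with $\dim{[\g_3,\g_3]}=2$. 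This case and part (b) I would treat by a single computation: in both, $\g$ is not unimodular, its unimodular kernel $\uf$ is a six-dimensional almost Abelian (hence unimodular) codimension-one ideal, $\uf$ contains a five-dimensional Abelian ideal $\af$ of $\g$, and there is a two-dimensional Abelian subalgebra $\mathfrak{b}$ complementary to $\af$ acting on $\af$ through commuting endomorphisms $A,B$; moreover $\Ann{\af}$ consists entirely of closed one-forms.

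\emph{The common computation.} Let $\varphi$ be a cocalibrated $\G_2$-structure and $\Psi:=\star_\varphi\varphi$; then $d\Psi=0$ and, by Lemma \ref{le:alginv1}, $(\rk(\Psi),l(\Psi),r(\Psi),m(\Psi))=(7,5,3,3)$, with the stated values also for the contractions $e\hook\Psi$ ($e\in\g$) and the restrictions of $\Psi$ to hyperplanes. Fix a basis $\beta_1,\beta_2$ of $\Ann{\af}$ and grade $\L^*\g^*$ by $\L^{i,j,k}:=\L^i\af^*\wedge\L^j\spa{\beta_1}\wedge\L^k\spa{\beta_2}$, with $\af^*$ identified with a complement of $\spa{\beta_1,\beta_2}$. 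Since $\af$ is Abelian and $d\beta_1=d\beta_2=0$, Lemma \ref{le:codimensiononeideal} shows that $d$ leaves the $\af^*$-degree $i$ unchanged and raises $j+k$ by one. Projecting $d\Psi=0$ onto the top graded components then forces $\Psi^{4,0,0}$ to be killed by the two derivations that $A$ and $B$ induce on $\L^4\af^*$ --- which, because $\Psi^{4,0,0}$ is automatically decomposable (a four-form on the five-dimensional $\af$), restricts it severely as soon as $A$ or $B$ has non-zero trace --- and it yields a linear relation between $\Psi^{3,1,0}$ and $\Psi^{3,0,1}$.

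\emph{Using the invariants; the obstacle.} Restricting $\Psi$ and the contractions $e\hook\Psi$ ($e\in\mathfrak{b}$) to the hyperplanes $\ker\beta_1$ and $\ker\beta_2$ gives, by Lemma \ref{le:alginv1}(b),(c), four-forms of length three and rank-six three-forms with $r=2$; feeding these through Lemma \ref{le:alginv2}(a) and (b) produces two-forms on a four-dimensional subspace of $\af$ that must be non-degenerate. Combining these non-degeneracy statements with the constrained shape of $\Psi^{4,0,0}$ and the relation between $\Psi^{3,1,0}$ and $\Psi^{3,0,1}$ should pin the commuting pair $(A,B)$ down to a configuration that, by inspection of Tables \ref{table:3d} and \ref{table:4d}, occurs only in the cases allowed by the statement --- forcing the unimodularity of $\g_5$ in (b), and being impossible in the remaining case of (a). The overall structure parallels the proof of Proposition \ref{pro:obstruct3dunimod}; the main difficulty is that here the codimension-one ideal $\uf$ is in general not a direct summand (only almost Abelian), so several more bidegree components of $d\Psi$ are non-zero and the bookkeeping is heavier, and I expect the genuinely hard step to be this last incompatibility, which will presumably require a short case distinction according to the rank and Jordan type of the $\mathfrak{b}$-action on $\af$.
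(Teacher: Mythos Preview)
Your reduction of the $\g_3=\mathfrak{r}_2\op\bR$ subcase of (a) to (b) is clean and correct; the paper does not do this, instead proving (a) directly with a four-fold grading $\L^i\uf_3^*\wedge\L^j\uf_2^*\wedge\L^k\spa{e^4}\wedge\L^l\spa{e^7}$. Your coarser $5{+}1{+}1$ grading is fine and in fact amounts to the same computation.

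Where you go astray is in the last paragraph: you massively overestimate the difficulty of the ``common computation''. No appeal to Lemma~\ref{le:alginv2}, no non-degenerate two-forms, and no case distinction on Jordan type are needed. The whole argument is this. Restrict $\Psi$ to the hyperplane $\ker\beta_2$; by Lemma~\ref{le:alginv1}(c) the resulting four-form $\Omega=\Psi^{4,0,0}+\Psi^{3,1,0}$ has $r(\Omega)=1$, and taking $\alpha=\beta_1$ in the definition of $r$ gives $l(\Psi^{4,0,0})\geq 1$, i.e.\ $\Psi^{4,0,0}\neq 0$. Now look at the $\L^{4,1,0}$ and $\L^{4,0,1}$ components of $d\Psi=0$: they say $A.\Psi^{4,0,0}=0$ and $B.\Psi^{4,0,0}=0$. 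In case (b), $B$ acts only on the one-dimensional $\spa{e^6}\subset\af$ with non-zero trace, so $B.\Psi^{4,0,0}=0$ forces $\Psi^{4,0,0}\in\L^4\uf_4^*$; this is the top form on $\uf_4$, on which $A$ acts by $\tr(A)$, so $A.\Psi^{4,0,0}=0$ together with $\Psi^{4,0,0}\neq 0$ gives $\tr(A)=0$, i.e.\ $\g_5$ unimodular. In the remaining case of (a), $B$ acts on $\uf_2$ invertibly (since $\dim[\g_3,\g_3]=2$) with non-zero trace (since $\g_3$ is non-unimodular); then $B.\Psi^{4,0,0}=0$ kills both the $\L^3\uf_3^*\wedge\uf_2^*$ part (by injectivity of $B$ on $\uf_2^*$) and the $\L^2\uf_3^*\wedge\L^2\uf_2^*$ part (by $\tr(B)\neq 0$), so $\Psi^{4,0,0}=0$, contradicting $\Psi^{4,0,0}\neq 0$. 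This is exactly what the paper does, only the paper's finer grading separates these two vanishings into distinct components from the start.

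So: drop the third paragraph of your plan entirely and replace it with the two-line endgame above. Your relation between $\Psi^{3,1,0}$ and $\Psi^{3,0,1}$, the contractions $e\hook\Psi$, and the Jordan-type case analysis are all unnecessary.
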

\begin{proof}
\begin{enumerate}
\item
Let $\uf_3$ be an Abelian ideal in $\g_4$. Choose an element $e_4\in \g_4\backslash \uf_3$ and an element $e_7\in \g_3\backslash \uf_2$, where $\uf_2$ is a codimension one Abelian ideal in $\g_3$. Let $e^4\in \Ann{\uf_3}\subseteq \g_4^*$, $e^4(e_4)=1$ and $e^7\in \Ann{\uf_2}\subseteq \g_3^*$, $e^7(e_7)=1$. Let $\Psi\in \L^4 \g^*$ be the Hodge dual of a cocalibrated $\G_2$-structure, set $\L^{i,j,k,l}:=\L^i \uf_3^*\wedge \L^j \uf_2^*\wedge \L^k \spa{e^4}\wedge \L^l \spa{e^7}$ and denote by $\theta^{i,j,k,l}$ for each $s:=(i+j+k+l)$-form $\theta\in \L^s \g^*$ the projection of $\theta$ onto $\L^{i,j,k,l}$. By Lemma \ref{le:codimensiononeideal},
\begin{equation*}
d(\L^{i,j,k,l})\subseteq \L^{i,j,k+1,l}+\L^{i,j,k,l+1}
\end{equation*}
for all $i,j,k,l\in \mathbb{N}_0$. Let $\Omega$ be the part of $\Psi$ in $\L^4\left( \uf_3^*\oplus \uf_2^*\oplus\spa{e^4}\right)$, i.e.
\begin{equation*}
\Omega=\Psi^{2,2,0,0}+\Psi^{3,1,0,0}+\Psi^{3,0,1,0}+\Psi^{2,1,1,0}+\Psi^{1,2,1,0}.
\end{equation*}
By Lemma \ref{le:alginv1}, $r(\Omega)=1$. Hence, $l(\Psi^{2,2,0,0}+\Psi^{3,1,0,0})\geq 1$ and so $\Psi^{2,2,0,0}+\Psi^{3,1,0,0}\neq 0$. Moreover, the closure of $\Psi$ implies
\begin{equation*}
\begin{split}
0 &=(d\Psi)^{2,2,0,1}=d(\Psi^{2,2,0,0})^{2,2,0,1},\,\,\, 0=(d\Psi)^{3,1,1,0}=d(\Psi^{3,1,0,0})^{3,1,1,0},\\
0 &=(d\Psi)^{3,1,0,1}=d(\Psi^{3,1,0,0})^{3,1,0,1}.
\end{split}
\end{equation*}
Since $\g_3$ is not unimodular, we have $d(\L^2 \uf_2^*)=\L^3 \g_3^*$. Thus, $d(\Psi^{2,2,0,0})^{2,2,0,1}=0$ implies $\Psi^{2,2,0,0}= 0$ and so $\Psi^{3,1,0,0}\neq 0$. If $\g_4$ is non-unimodular, then $d(\L^3 \uf_3^*)=\L^4 \g_4^*$ and so $d(\Psi^{3,1,0,0})^{3,1,1,0}\neq 0$, a contradiction. Hence, $\g_4$ is unimodular. Similarly, if $d|_{\uf_2^*}$ is injective, then $d(\Psi^{3,1,0,0})^{3,1,0,1}\neq 0$, a contradiction. Thus, $d|_{\uf_2^*}$ is not injective and $\g_3=\mathfrak{r}_2\op \bR$.
\item
The proof of part (b) is completely analogous to (a). Therefore, let $\Psi\in \L^4 \g^*$ be the Hodge dual of a cocalibrated $\G_2$-structure, let $\uf$ be an Abelian ideal of dimension four in $\g_5$, $e_5\in \g_5\backslash \uf$, $e^5\in \Ann{\uf}\subseteq \g_5^*$ with $e^5(e_5)=1$ and $e^6,e^7$ a basis of $\mathfrak{r}_2^*$ such that $de^6=e^{67}$ and $de^7=0$. Similarly to (a), we set
\begin{equation*}
\L^{i,j,k,l}:=\L^i \uf^*\wedge \L^j \spa{e^6} \wedge \L^k \spa{e^5}\wedge \L^l \spa{e^7}
\end{equation*}
and denote for all $s:=(i+j+k+l)$-forms $\theta\in \L^s \g^*$ the projection of $\theta$ onto $\L^{i,j,k,l}$ by $\theta^{i,j,k,l}$. Then 
$d(\L^{i,j,k,l})\subseteq \L^{i,j,k+1,l}+\L^{i,j,k,l+1}$ as in (a). Moreover, arguing as in (a), we get $\Psi^{4,0,0,0}+\Psi^{3,1,0,0}\neq 0$. Since $de^6\neq 0$, the identity
\begin{equation*}
0=(d\Psi)^{3,1,0,1}=d(\Psi^{3,1,0,0})^{3,1,0,1}
\end{equation*}
is true only if $\Psi^{3,1,0,0}=0$. Thus, $\Psi^{4,0,0,0}\neq 0$. But then
\begin{equation*}
0=(d\Psi)^{4,0,1,0}=d(\Psi^{4,0,0,0})
\end{equation*}
implies that $\g_5$ is unimodular by Lemma \ref{le:codimensiononeideal}.
\end{enumerate}
\end{proof}
\subsection{$\g_4$ not unimodular, $\g_3$ unimodular}\label{subsec:firstcase}
In this subsection, we prove Theorem \ref{th:main} (a). In the following, $\g=\g_4\op \g_3$ always denotes a seven-dimensional Lie algebra which is the Lie algebra direct sum of a four-dimensional non-unimodular Lie algebra $\g_4$ and of a three-dimensional unimodular Lie algebra $\g_3$. Furthermore, $\uf$ denotes the unimodular ideal of $\g_4$.

Proposition \ref{pro:obstruct3dunimod} shows that if $h^1(\mathfrak{g}_4)+h^1(\mathfrak{u})+h^2(\mathfrak{g}_3)-h^2(\mathfrak{g}_4)>4$, then $\g$ does not admit a cocalibrated $\G_2$-structure, giving us one direction of Theorem \ref{th:main} (a).

For the other direction, Lemma \ref{le:symplect} and Proposition \ref{pro:existence} (a) tell us that if $h^1(\mathfrak{g}_4)+h^1(\mathfrak{u})+h^2(\mathfrak{g}_3)-h^2(\mathfrak{g}_4)\leq 4$ and $\uf\neq e(1,1)$, then $\g$ does admit a cocalibrated $\G_2$-structure. By Table \ref{table:4d} or by Remark \ref{re:4dLAs}, the only four-dimensional non-unimodular Lie algebra $\g_4$ with unimodular ideal $\uf=e(1,1)$ is $\g_4=\mathfrak{r}_2\op \mathfrak{r}_2$. For $\g_4=\mathfrak{r}_2\op \mathfrak{r}_2$, Lemma \ref{le:contact} and Proposition \ref{pro:existence} (b) imply that $\g_4\op \g_3=\mathfrak{r}_2\op \mathfrak{r}_2\op \g_3$ does admit a cocalibrated $\G_2$-structure if $\g_3\neq \bR^3$, i.e. if $h^2(\g_3)\leq 2$. But $h^1(\mathfrak{r}_2\op \mathfrak{r}_2)+h^1(e(1,1))-h^2(\mathfrak{r}_2\op \mathfrak{r}_2)=2$. Hence, also in this case, $\g_4\op \g_3$ admits a cocalibrated $\G_2$-structure if and only if $h^1(\mathfrak{g}_4)+h^1(\mathfrak{u})+h^2(\mathfrak{g}_3)-h^2(\mathfrak{g}_4)\leq 4$. This proves Theorem \ref{th:main} (a).

\subsection{$\g_4$ unimodular, $\g_3$ unimodular}\label{subsec:secondcase}
Here, we prove Theorem \ref{th:main} (b) and denote by $\g=\g_4\op \g_3$ always a seven-dimensional Lie algebra which is the Lie algebra direct sum of a four-dimensional unimodular Lie algebra $\g_4$ and of a three-dimensional unimodular Lie algebra $\g_3$.

We begin with the case that $\g_4$ is indecomposable.
If $[\g_4,\g_4]=\bR^3$, then Lemma \ref{le:symplect}, Proposition \ref{pro:existence} (a) and Proposition \ref{pro:obstruct3dunimod} tell us that $\g$ admits a cocalibrated $\G_2$-structure if and only if
\begin{equation*}
h^1(\g_4)+3- h^2(\g_4)+ h^2(\g_3) =h^1(\g_4)+h^1(\bR^3)-h^2(\g_4)+h^2(\g_3)\leq 4.
\end{equation*}
Table \ref{table:4d} tells us that always $h^1(\g_4)-h^2(\g_4)=1$ in the considered cases. Hence, $\g$ admits for these cases a cocalibrated $\G_2$-structure exactly when $h^2(\g_3)=0$, i.e. when $\g_3\in \{\so(3),\so(2,1)\}$.

Next, we assume that $\g_4$ is indecomposable but $[\g_4,\g_4]\neq \bR^3$. By inspection of Table \ref{table:4d}, $\g_4\in \{A_{4,1},A_{4,8},A_{4,10}\}$.

Let us begin with $\g_4\in \{A_{4,8},A_{4,10}\}$. Then, in both cases, $h^1(\mathfrak{g}_4)+h^1(\mathfrak{u})-h^2(\mathfrak{g}_4)=3$, where $\uf$ is the unique unimodular ideal in $\g_4$ which is isomorphic to $\h_3$. Thus, Proposition \ref{pro:obstruct3dunimod} yields that $\g$ does not admit a cocalibrated $\G_2$-structure if $h^2(\g_3)\geq 2$. Conversely, Corollary \ref{co:nonsolvable} tells us that if $h^2(\g_3)=0$, i.e. $\g_3$ is not solvable, then $\g$ does admit a cocalibrated $\G_2$-structure. So we are left with the case that $h^2(\g_3)=1$, i.e. $\g_3\in\{e(2),e(1,1)\}$. For $\g=A_{4,8}\op e(1,1)$, a cocalibrated $\G_2$-structure is given in Table \ref{table:examples}.
All other cases do not admit a cocalibrated $\G_2$-structure:
\begin{lemma}
Let $\g\in \{A_{4,8}\op e(2), A_{4,10}\op e(2), A_{4,10}\op e(1,1)\}$. Then $\g$ does not admit a cocalibrated $\G_2$-structure.
\end{lemma}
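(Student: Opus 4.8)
The plan is to suppose that some closed $\Psi\in\L^4\g^*$ is the Hodge dual of a $\G_2$-structure on $\g$ and to derive a contradiction, arguing simultaneously for the three Lie algebras in the spirit of Propositions~\ref{pro:obstruct3dunimod} and~\ref{pro:obstruct3dnotunimod}.

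First I would fix good coordinates. For $\g_4\in\{A_{4,8},A_{4,10}\}$ Lemma~\ref{le:4dcodim1} yields a unique codimension-one ideal $\uf_3\cong\h_3$, so by Lemmas~\ref{le:4dbasis} and~\ref{le:3d} there is a basis with $\g_4^*=\spa{e^1}\op V_2\op\spa{e^4}$, $de^1=\nu\in\L^2 V_2\setminus\{0\}$, $d\alpha=F(\alpha)\wedge e^4$ for $\alpha\in V_2$, $de^4=0$, and $\g_3^*=W_2\op\spa{e^7}$, $d\beta=G(\beta)\wedge e^7$ for $\beta\in W_2$, $de^7=0$, where $F\in\mathfrak{gl}(V_2)$ and $G\in\mathfrak{gl}(W_2)$ are trace-free, and invertible because $\g_4\in\{A_{4,8},A_{4,10}\}$ and $\g_3\in\{e(2),e(1,1)\}$. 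By Cayley--Hamilton, $F^2=-\det(F)\,\id$ and $G^2=-\det(G)\,\id$, and from Tables~\ref{table:4d} and~\ref{table:3d} one reads off that $\det F>0$ exactly for $\g_4=A_{4,10}$ and $\det G>0$ exactly for $\g_3=e(2)$; hence in each of the three Lie algebras at least one of $F,G$ is ``elliptic'' ($\det>0$), whereas the excluded neighbour $A_{4,8}\op e(1,1)$ has $\det F<0$ and $\det G<0$. The six-dimensional ideal $\uf:=\uf_3\op\g_3\cong\h_3\op\g_3$ is unimodular and $\g=\uf\op\spa{e_4}$, so Lemma~\ref{le:codimensiononeideal} gives $d_\g e^4=0$ and $d_\g\alpha=d_\uf\alpha+f(\alpha)\wedge e^4$ for $\alpha\in\uf^*$ with $f\in\mathfrak{gl}(\uf^*)$; since $\g=\g_4\op\g_3$ is a Lie algebra direct sum, $f$ vanishes on $\spa{e^1}\op\g_3^*$ and restricts to $F$ on $V_2$. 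Writing $\Psi=\Omega+\rho\wedge e^4$ with $\Omega:=\Psi|_\uf\in\L^4\uf^*$ and $\rho:=(e_4\hook\Psi)|_\uf\in\L^3\uf^*$, closedness of $\Psi$ is equivalent to $d_\uf\Omega=0$ and $d_\uf\rho=-f.\Omega$.

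Next I would pin down $\Omega$. Running the proof of Proposition~\ref{pro:obstruct3dunimod} and noting that here $h^1(\g_4)+h^1(\uf_3)-h^2(\g_4)+h^2(\g_3)=1+2-0+1=4$, the inequality in that proof becomes an equality; since $\ker d_{\g_4}|_{\L^2\g_4^*}=\spa{\nu}\op\bigl((\ker d_{\uf_3}|_{\uf_3^*})\wedge e^4\bigr)$, the closed two-form appearing there must be $s\nu+(\,\cdot\,)\wedge e^4$ with $s\neq0$, which forces the bidegree-$(2,2)$ part $\Omega^{2,2}$ of $\Omega$ (with respect to $\uf^*=\uf_3^*\op\g_3^*$) to be $\psi+t\,\nu\wedge\sigma$ with $\psi\in\L^2\uf_3^*\wedge d(\g_3^*)$, $\sigma\in\L^2 W_2$ a generator of $H^2(\g_3)$, $t\neq0$, and $l(\Omega^{2,2})=3$ by Lemmas~\ref{le:alginv1} and~\ref{le:alginv2}(a). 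Decomposing $d_\uf\Omega=0$ by bidegree and using unimodularity of $\uf_3,\g_3$ and invertibility of $F,G$ then shows $\Omega^{3,1}$ is a multiple of $e^1\wedge\nu\wedge e^7$ and $\Omega^{1,3}$ lies in $V_2\wedge\L^2 W_2\wedge e^7$, so $\Omega$ --- and through $d_\uf\rho=-f.\Omega$ also $\rho$ --- is determined up to finitely many parameters.

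Finally, the contradiction. By Lemma~\ref{le:alginv1}(b) the three-form $\rho$ satisfies $(\rk(\rho),l(\rho),r(\rho),m(\rho))=(6,3,2,2)$. From the explicit shape of $\rho$ one checks that, for $V_4:=V_2\op W_2$ and $U:=\spa{e^1,e^7}$, one has $\rho\in\L^2 V_4^*\wedge U^*\op V_4^*\wedge\L^2 U^*$ (possibly after adjusting $U$ within $\uf^*$ using the remaining constraints, should the $\nu\wedge W_2$-component of $\rho^{2,1}$ not vanish automatically); then Lemma~\ref{le:alginv2}(b) yields a two-dimensional subspace of $\L^2 V_4^*$ whose non-zero elements all have length two, equivalently, by Lemma~\ref{le:subspaceslengthtwo}, a symmetric $2\times2$ matrix $H$ that must be definite. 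Substituting the data from the previous step, $H$ turns out to be, up to a change of basis and positive scalars, $\diag(-\det F,\,-\det G)$, so definiteness of $H$ would force $\det F<0$ and $\det G<0$, i.e. both $F$ and $G$ hyperbolic. As this fails for each of $A_{4,8}\op e(2)$, $A_{4,10}\op e(2)$ and $A_{4,10}\op e(1,1)$, the contradiction follows. I expect the genuine difficulty to lie in this last step --- selecting the $4+2$ splitting correctly and grinding through the bookkeeping that produces $H\sim\diag(-\det F,-\det G)$ uniformly across the three algebras; the reductions in the first two steps are routine given Lemmas~\ref{le:4dbasis}--\ref{le:codimensiononeideal}, Lemma~\ref{le:alginv1} and Proposition~\ref{pro:obstruct3dunimod}.
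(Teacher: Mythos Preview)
Your overall strategy --- decompose $\Psi$, pin down a three-form $\rho$ on a six-dimensional subspace, invoke Lemma~\ref{le:alginv1}(b) and Lemma~\ref{le:alginv2}(b) to produce two two-forms whose Gram matrix $H$ must be definite, then compute $H$ in terms of $\det F$ and $\det G$ --- is exactly the paper's. The paper, however, splits off $e^1$ rather than $e^4$: writing $\Psi=\rho\wedge e^1+\Omega$ with $\rho\in\L^3(\spa{e^2,e^3,e^4}\oplus\g_3^*)$, the closedness of $\Psi$ gives $d\rho=0$ directly (because $de^1=e^{23}$ contributes only to $\L^5(\spa{e^2,\dots,e^7})$), avoiding your coupled equation $d_\uf\rho=-f.\Omega$. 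From $d\rho=0$ and the invertibility of $F,G$ one gets $\rho=(\omega_1+ae^{23})\wedge e^4+(\omega_2+be^{23})\wedge e^7+\beta\wedge e^{47}$ with $\omega_1,\omega_2\in V_2\wedge W_2$, and the $4{+}2$ splitting is $(V_2\oplus W_2)\oplus\spa{e^4,e^7}$. The closure relation $d(\omega_1\wedge e^4+\omega_2\wedge e^7)=0$ then forces $\omega_2=F^{-1}(\alpha_1)\wedge G(\alpha_4)+F^{-1}(\alpha_2)\wedge G(\alpha_3)$ once $\omega_1=\alpha_1\wedge\alpha_4+\alpha_2\wedge\alpha_3$.

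The genuine gap is your final claim that $H$ is, up to basis change and positive scalars, $\diag(-\det F,-\det G)$, and that definiteness therefore forces both determinants negative. Neither assertion survives: with the paper's normalisation $\omega_1^2\neq0$, $\omega_1\wedge\omega_2=B\,\omega_1^2$, $\omega_2^2=C\,\omega_1^2$, one finds $C=\det(G)/\det(F)$, but the off-diagonal $B$ is \emph{not} zero in general. In particular, for $A_{4,10}\oplus e(2)$ one has $\det F>0$ and $\det G>0$, so your diagonal matrix $\diag(-\det F,-\det G)$ would be negative definite --- definite! --- and no contradiction arises from your formula. The paper handles $A_{4,8}\oplus e(2)$ and $A_{4,10}\oplus e(1,1)$ via $C=\det(G)/\det(F)<0$, hence $C-B^2<0$ and $H$ is indefinite; but for $A_{4,10}\oplus e(2)$ it must carry out an explicit computation with general $\omega_1=c_1e^{25}+c_2e^{26}+c_3e^{35}+c_4e^{36}$, obtaining
\[
C-B^2=-\frac{\bigl((c_1-c_4)^2+(c_2+c_3)^2\bigr)\bigl((c_1+c_4)^2+(c_2-c_3)^2\bigr)}{4(c_1c_4-c_2c_3)^2}<0,
\]
so that $H$ is indefinite after all. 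Your sketch conflates ``$\det F\cdot\det G>0$'' with ``$H$ indefinite'' and misses precisely the case that demands real work.
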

\begin{proof}
Let $e^1,e^2,e^3,e^4$ be the basis of $\g_4^*$, $\g_4\in \{A_{4,8},A_{4,10}\}$ as in Table \ref{table:4d}. Then there exists a linear, trace-free, invertible map $F:\spa{e^2,e^3}\rightarrow \spa{e^2,e^3}$ such that $de^1=e^{23}$, $d\alpha=F(\alpha)\wedge e^4$, $de^4=0$ for all $\alpha\in \spa{e^2,e^3}$. For $\g_4=A_{4,8}$ we have $F(e^2)=e^2$, $F(e^3)=-e^3$ whereas for $\g_4=A_{4,10}$ we have $F(e^2)=e^3$ and $F(e^3)=-e^2$. In particular, $\det(F)=-1$ if $\g_4=A_{4,8}$ and $\det(F)=1$ if $\g_4=A_{4,10}$.

Let $e^5,e^6,e^7$ be a basis of $\g_3^*$, $\g_3\in \{e(2),e(1,1)\}$ as in Table \ref{table:3d}. Then there exists a linear, trace-free, invertible map $G:\spa{e^5,e^6}\rightarrow \spa{e^5,e^6}$ such that $d\beta=G(\beta)\wedge e^7$, $de^7=0$ for all $\beta\in \spa{e^5,e^6}$. In both cases we have $G(e^5)=e^6$, whereas $G(e^6)=e^5$ if $\g_3=e(1,1)$ and $G(e^6)=-e^5$ if $\g_3=e(2)$. In particular, $\det(G)=-1$ if $\g_3=e(1,1)$ and $\det(G)=1$ if $\g_3=e(2)$.

Let us now assume that $\Psi\in \L^4\g^*$ is a (closed) Hodge dual of a cocalibrated $\G_2$-structure $\varphi\in \L^3 \g^*$. We decompose $\Psi$ uniquely into
\begin{equation*}
\Psi=\rho\wedge e^1+\Omega
\end{equation*}
with $\rho\in \L^3 (\spa{e^2,e^3,e^4}\op \g_3^*)$, $\Omega\in \L^4 (\spa{e^2,e^3,e^4}\op \g_3^*)$. Then
\begin{equation*}
0=d\Psi=d\rho\wedge e^1- \rho\wedge e^{23}+d\Omega,
\end{equation*}
$d\Omega\in \L^3 \spa{e^2,e^3,e^5,e^6}\wedge e^{47}$ (note that $de^{2356}=0$) and $d\rho\in \L^4 (\spa{e^2,e^3,e^4}\op \g_3^*)$ imply $d\rho=0$
and $\pr_{\spa{e^{456},e^{567}}}(\rho)=0$. Moreover, $\ker F=\{0\}=\ker G$ and $d\rho=0$ imply $\pr_{\L^3 \spa{e^2,e^3,e^5,e^6}} (\rho)=0$.

Thus, $\rho=(\omega_1+a e^{23})\wedge e^4+(\omega_2+b e^{23})\wedge e^7+\beta\wedge e^{47}$ for certain $\omega_1,\omega_2\in  \spa{e^2,e^3}\wedge \spa{e^5,e^6}$, $a,b\in \bR$ and $\beta\in  \spa{e^2,e^3,e^5,e^6}$. Now Lemma \ref{le:alginv1} tells us that $r(\rho)=2$ and so Lemma \ref{le:alginv2} yields that $\omega_1+ae^{23}$ and $\omega_2+be^{23}$ span a two-dimensional subspace in $\L^2 \spa{e^2,e^3,e^5,e^6}$ in which each non-zero element has length two. This is equivalent to the requirement that $\omega_1$ and $\omega_2$ span such a two-dimensional subspace of $\L^2\spa{e^2,e^3,e^5,e^6}$ and Lemma \ref{le:subspaceslengthtwo} shows that this is equivalent to $\omega_1^2\neq 0$ and $C-B^2>0$ for the numbers $B,C\in \bR$ defined by $\omega_1\wedge \omega_2=B \omega_2^2$, $\omega_2^2=C \omega_1^2$. By Lemma \ref{le:subspaceslengthtwo}, there exists a basis $\alpha_1,\alpha_2$ of $\spa{e^2,e^3}$ and $\alpha_3,\alpha_4$ of $\spa{e^5,e^6}$ such that $\omega_1=\alpha_1\wedge \alpha_4+\alpha_2\wedge \alpha_3$. Since $d(\omega_1\wedge e^4+\omega_2\wedge e^7)=d\rho=0$, we must have $\omega_2=F^{-1}(\alpha_1)\wedge G(\alpha_4)+ F^{-1}(\alpha_2)\wedge G(\alpha_3)$. Thus, $C=\frac{\det(G)}{\det(F)}$. If $\g\in \{A_{4,8}\op e(2),A_{4,10}\op e(1,1)\}$, then $C<0$ leading to $C-B^2<0$. Thus, for these cases, there cannot exist a cocalibrated $\G_2$-structure.

For the missing case $\g=A_{4,10}\op e(2)$, let $\omega_1:=c_1 e^{25}+c_2e^{26}+c_3 e^{35}+c_4 e^{36}$ be a general two-form in $\spa{e^2,e^3}\wedge  \spa{e^5,e^6}$ of length two, i.e. with $c_1 c_4-c_2 c_3\neq 0$. Then $\omega_2=-c_4 e^{25}+c_3 e^{26}+c_2 e^{35}-c_1 e^{36}$, $B=-\frac{c_1^2+c_2^2+c_3^2+c_4^2}{2(c_1 c_4-c_2 c_3)}$, $C=1$ and so
\begin{equation*}
\begin{split}
C-B^2 &= \frac{4(c_1 c_4-c_2 c_3)^2-(c_1^2+c_2^2+c_3^2+c_4^2)^2}{4(c_1 c_4-c_2 c_3)^2}\\
&=-\frac{((c_1-c_4)^2+(c_2+c_3)^2) ((c_1+c_4)^2+(c_2-c_3)^2)}{4(c_1 c_4-c_2 c_3)^2}<0.
\end{split}
\end{equation*}
Thus, $A_{4,10}\op e(2)$ does not admit a cocalibrated $\G_2$-structure.
\end{proof}

Next we consider direct sums with $A_{4,1}$. The Lie algebra $A_{4,1}$ is almost Abelian and it admits a symplectic two-form, e.g. $\omega=e^{14}+e^{23}$ in the basis $e^1,e^2,e^3,e^4$ given in Table \ref{table:4d}. Hence, Proposition \ref{pro:existence} (a) shows that $A_{4,1}\op \g_3$ admits a cocalibrated $\G_2$-structure if $h^2(\g_3)\leq 1$, i.e. if $\g_3\notin\{\bR^3,\h_3\}$. The Lie algebra $\g=A_{4,1}\oplus \bR^3$ is also almost Abelian. The almost Abelian Lie algebras admitting a cocalibrated $\G_2$-structure were determined in \cite{F} and the results there show that $\g=A_{4,1}\oplus \bR^3$ does not admit a cocalibrated $\G_2$-structure. Also $\g=A_{4,1}\oplus \h_3$ does not admit a cocalibrated $\G_2$-structure.
\begin{lemma}
Let $\g=A_{4,1}\op \h_3$. Then $\g$ does not admit a cocalibrated $\G_2$-structure.
\end{lemma}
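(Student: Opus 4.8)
The plan is to argue by contradiction, along the lines of the proof of the preceding lemma. Assume $\Psi\in\L^4\g^*$ is the Hodge dual of a cocalibrated $\G_2$-structure on $\g=A_{4,1}\op\h_3$, so that $d\Psi=0$ and $\Psi$ carries the Westwick invariants recorded in Lemma \ref{le:alginv1}. As $A_{4,1}$ is the $4$-dimensional filiform nilpotent Lie algebra and $\h_3$ is the Heisenberg algebra, we may choose bases $e^1,\dots,e^4$ of $A_{4,1}^*$ and $e^5,e^6,e^7$ of $\h_3^*$ as in Tables \ref{table:4d} and \ref{table:3d} with $de^1=de^2=de^5=de^6=0$, $de^3=e^{12}$, $de^4=e^{13}$ and $de^7=e^{56}$. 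Put $V_4^*:=\spa{e^1,e^2,e^3,e^4}$ and $W:=\spa{e^1,\dots,e^6}$, and split off $e^7$: $\Psi=\rho\wedge e^7+\Omega$ with $\rho\in\L^3 W$, $\Omega\in\L^4 W$. Since $d$ preserves $\L^* W$ and $de^7=e^{56}\in\L^2 W$, comparing $e^7$-components in $0=d\Psi$ gives $d\rho=0$ and $d\Omega=\rho\wedge e^{56}$. Decomposing $\rho$ and $\Omega$ according to their $e^5,e^6$-content,
\[
\rho=e^{56}\wedge\sigma+e^5\wedge\mu+e^6\wedge\nu+\tau,\qquad \Omega=e^{56}\wedge\Omega_0+e^5\wedge\Omega_1+e^6\wedge\Omega_2+\Omega_3,
\]
with $\sigma\in V_4^*$, $\mu,\nu,\Omega_0\in\L^2 V_4^*$, $\tau,\Omega_1,\Omega_2\in\L^3 V_4^*$ and $\Omega_3\in\L^4 V_4^*$. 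As $A_{4,1}$ is unimodular, every $2$- and $3$-form on $V_4$ occurring here is $d$-closed apart from $\sigma,\mu,\nu$; hence $d\rho=0$ forces $\sigma\in\ker d|_{V_4^*}=\spa{e^1,e^2}$ and $\mu,\nu\in\ker d|_{\L^2 V_4^*}=\spa{e^{12},e^{13},e^{14},e^{23}}$, while the $e^5,e^6$-free part of $d\Omega=\rho\wedge e^{56}$ gives $d\Omega_0=\tau$, so $\tau\in d(\L^2 V_4^*)=\spa{e^{123},e^{124}}$.

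By Lemma \ref{le:alginv1}(b),(c), $\rho=\rho(e^7,W)$ has $(\rk,l,r,m)=(6,3,2,2)$ and $\Omega=\Omega(e^7,W)$ has $(\rk,l,r,m)=(6,3,1,2)$. Suppose first $\tau=0$. Then $\rho=\mu\wedge e^5+\nu\wedge e^6+\sigma\wedge e^{56}$ lies in $\L^2 V_4^*\wedge\spa{e^5,e^6}\op V_4^*\wedge\L^2\spa{e^5,e^6}$, is of rank six and has $r(\rho)=2$, so Lemma \ref{le:alginv2}(b), applied to the splitting $W=V_4^*\op\spa{e^5,e^6}$, forces $\mu$ and $\nu$ to span a two-dimensional subspace of $\L^2 V_4^*$ consisting, apart from $0$, of length-two forms. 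This is impossible: $\mu,\nu\in\spa{e^{12},e^{13},e^{14},e^{23}}$, and one checks directly that the pairing $(\omega,\omega')\mapsto\omega\wedge\omega'\in\L^4 V_4^*\cong\bR$ on this space has two-dimensional radical $\spa{e^{12},e^{13}}$ and is hyperbolic on $\spa{e^{14},e^{23}}$, whence its restriction to any two-dimensional subspace is indefinite (or zero) and the matrix $H$ of Lemma \ref{le:subspaceslengthtwo} is never definite. Therefore $\tau\neq0$.

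The bulk of the work is to rule out $\tau\neq0$ as well. Here one uses that $d\Omega_0=\tau\neq0$ determines $\Omega_0$ modulo $\spa{e^{12},e^{13},e^{14},e^{23}}$, that $\Omega_3\in\spa{e^{1234}}$, and then, via a dual isomorphism $\delta$ adapted to $W=V_4^*\op\spa{e^5,e^6}$, that $l(\Omega)=3$ amounts to a non-vanishing wedge-square of a two-form on $W$ with a single $e^{56}$-term, while $r(\Omega)=1$ says that $\Omega$ agrees with a decomposable four-form up to a wedge with a one-form. The plan is to feed these constraints, together with the conditions on $\rho$ and the coupling $d\Omega=\rho\wedge e^{56}$, into Lemmas \ref{le:alginv2} and \ref{le:subspaceslengthtwo} so as once more to trap the relevant two-forms in a two-dimensional subspace --- of $\spa{e^{12},e^{13},e^{14},e^{23}}$, or of a $\GL(V_4)$-translate of it adapted to the nilpotent structure of $A_{4,1}$ --- on which no definite $H$ exists. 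The main obstacle is precisely disentangling the mutual dependence of $\rho$, $\Omega$, $\tau$ and $\Omega_0$ when $\tau\neq0$: in contrast to the $A_{4,8}$-type cases, where the ambient structure maps are invertible and the contradiction is a sign mismatch in a ratio of determinants, here $A_{4,1}$ and $\h_3$ are nilpotent, so the obstruction has to be read off directly from the degeneracy of the wedge pairing on $\ker d|_{\L^2 V_4^*}$.
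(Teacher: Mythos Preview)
Your argument is genuinely incomplete: the case $\tau\neq 0$ is not proved, only announced. The ``plan'' you sketch --- feeding the constraints on $\rho,\Omega,\tau,\Omega_0$ back into Lemmas \ref{le:alginv2} and \ref{le:subspaceslengthtwo} --- is too vague to assess. In particular, once $\tau\neq 0$, the three-form $\rho$ no longer lies in $\L^2 V_4^*\wedge\spa{e^5,e^6}\oplus V_4^*\wedge\L^2\spa{e^5,e^6}$, so Lemma \ref{le:alginv2}(b) is not available in the form you used it for $\tau=0$, and you have not indicated which pair of two-forms in $\ker d|_{\L^2 V_4^*}$ (or a translate thereof) you intend to trap. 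The degeneracy of the wedge pairing on $\spa{e^{12},e^{13},e^{14},e^{23}}$ is a nice observation, but you have not shown how to reduce the $\tau\neq 0$ case to it.

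The paper avoids this bifurcation entirely by splitting along a different direction. In your basis, instead of peeling off $e^7$ (the top of $\h_3$), one peels off $e^4$ (the top of the filiform chain of $A_{4,1}$) together with $e^1$: write $\Psi=\Omega+e^4\wedge\nu+e^{41}\wedge\omega$ with $\Omega\in\L^4\spa{e^1,e^2,e^3,e^5,e^6,e^7}$, $\nu\in\L^3\spa{e^2,e^3,e^5,e^6,e^7}$, $\omega\in\L^2\spa{e^2,e^3,e^5,e^6,e^7}$. A short direct computation of $d\Psi=0$ (using $de^3=e^{12}$, $de^4=e^{13}$, $de^7=e^{56}$) kills \emph{all} coefficients of $\nu$ involving $e^7$, so $\nu\in\L^3\spa{e^2,e^3,e^5,e^6}$. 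Since this exterior algebra is four-dimensional, $l(\nu)\le 1$. But $r(e_4\hook\Psi)=2$ by Lemma \ref{le:alginv1}(b), which forces $l(\nu)\ge 2$ upon splitting off $e^1$; contradiction. The point is that the filiform relations $de^3=e^{12}$, $de^4=e^{13}$ couple $e^4$ tightly to $e^1$, so this choice of splitting produces enough vanishing in one stroke, with no case distinction.
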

\begin{proof}
Choose a basis $e^1,e^2,e^3,e^4,e^5,e^6,e^7$ of $A_{4,1}\oplus \h_3$ as in Table \ref{table:4d} and Table \ref{table:3d}, i.e.
\begin{equation*}
de^1=e^{24},\; de^2=e^{34},\; de^3=0,\; de^4=0,\; de^5= e^{67}\; de^6=0,\; de^7=0,
\end{equation*}
Assume that there exists a cocalibrated $\G_2$-structure and let
\begin{equation*}
\Psi=\sum_{1\leq i< j< k< l\leq 7} a_{ijkl} e^{ijkl}
\end{equation*}
be its (closed) Hodge dual. Then a short computation shows that $a_{1567}=a_{2567}=a_{1256}=a_{1356}=a_{1257}=a_{1357}=a_{1235}=0$.
If we decompose $\Psi$ uniquely into
\begin{equation*}
\Psi=\Omega+e^1\wedge \nu+e^{14}\wedge \omega,
\end{equation*}
with $\Omega\in \L^4 \spa{e^2,e^3,e^4,e^5,e^6,e^7}$, $\nu\in \L^3 \spa{e^2,e^3,e^5,e^6,e^7}$ and\\
 $\omega\in \L^2 \spa{e^2,e^3,e^5,e^6,e^7}$, then $\nu$ actually is in $\L^3 \spa{e^2,e^3,e^6,e^7}$ and so of length at most one. If we consider the decomposition $\bigl(\spa{e^2,e^3,e^5,e^6,e^7}\op \spa{e^4}\bigr)\op \spa{e^1}=\g^*$, Lemma \ref{le:alginv1} implies that the length of $\nu$ has to be at least two, a contradiction. 
\end{proof}
So we are left with the case that $\g_4$ is decomposable. Then $\g_4$ is the Lie algebra direct sum of a three-dimensional unimodular Lie algebra $\h$ and $\bR$ and $\g$ always admits a cocalibrated $\G_2$-structure.
\begin{proposition}\label{pro:decomposableunimodular}
Let $\g=\g_4\op\g_3$ be a Lie algebra direct sum of a four-dimensional unimodular Lie algebra $\g_4$ and of a three-dimensional unimodular Lie algebra $\g_3$. Moreover, let $\g_4=\h\op \bR$ be a Lie algebra direct sum of a three-dimensional unimodular Lie algebra $\h$ and $\bR$. Then $\g$ admits a cocalibrated $\G_2$-structure.
\end{proposition}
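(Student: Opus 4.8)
The plan is to exhibit $\g$ as a Lie algebra direct sum of a four-dimensional unimodular Lie algebra and a three-dimensional unimodular Lie algebra to which Proposition~\ref{pro:existence}(a) applies, exploiting the freedom to group the $\bR$-summand of $\g=\h\op\g_3\op\bR$ with either of the two three-dimensional summands. If $\g$ is not solvable we are done by Corollary~\ref{co:nonsolvable}, so we may assume $\g$, and hence both $\h$ and $\g_3$, solvable; then $\h$, $\g_3$ and therefore also $\h\op\bR$ and $\g_3\op\bR$ are all almost Abelian.

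Interchanging $\h$ and $\g_3$ if necessary, I would assume $h^1(\h)\geq h^1(\g_3)$ and set $\g_4:=\h\op\bR$, so that $\g=\g_4\op\g_3$ with $\g_3$ three-dimensional unimodular. For any three-dimensional unimodular Lie algebra $\mathfrak{k}$ one has $d|_{\L^2\mathfrak{k}^*}=0$, hence $h^2(\mathfrak{k})=3-\dim{[\mathfrak{k},\mathfrak{k}]}=h^1(\mathfrak{k})$; in particular $h^2(\g_3)=h^1(\g_3)$. By the K\"unneth formula (or a direct computation using that $\bR$ is a central direct summand of $\g_4$) one gets $h^1(\g_4)=h^1(\h)+1$ and $h^2(\g_4)=h^2(\h)+h^1(\h)=2\,h^1(\h)$, and the codimension one Abelian ideal $\uf$ of $\g_4$ is isomorphic to $\bR^3$, so $h^1(\uf)=3$. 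Substituting into Lemma~\ref{le:symplect} yields a subspace of $\L^2\g_4^*$ of dimension $h^2(\g_4)-h^1(\g_4)-h^1(\uf)+4=h^1(\h)$ in which every non-zero element is symplectic. Since $h^1(\h)\geq h^1(\g_3)=h^2(\g_3)$, any subspace of it of dimension $D:=h^2(\g_3)$ meets the hypothesis of Proposition~\ref{pro:existence}(a), which then provides a cocalibrated $\G_2$-structure on $\g$.

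The main point, and the reason the reduction is not completely trivial, is that one may not take $\g_4$ to be the $\g_4$ given in the statement: for instance $\h_3\op\bR$ carries at most a two-dimensional subspace of symplectic two-forms, so for $\g_3=\bR^3$, where $D=3$, the ``obvious'' splitting fails and one must regroup the $\bR$-factor with $\g_3$ instead. The assumption $h^1(\h)\geq h^1(\g_3)$ is exactly what guarantees that at least one of the two regroupings works, because by the computation above the symplectic subspace produced by Lemma~\ref{le:symplect} has dimension $h^1(\h)$ for $\h\op\bR$ and dimension $h^1(\g_3)$ for $\g_3\op\bR$, whereas Proposition~\ref{pro:existence}(a) requires dimension only $h^2(\g_3)=h^1(\g_3)$, respectively $h^2(\h)=h^1(\h)$. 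Beyond this, the remaining work is just the bookkeeping with the cohomology dimensions indicated above.
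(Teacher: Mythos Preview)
Your proof is correct and follows essentially the same route as the paper's. Both arguments swap the roles of $\h$ and $\g_3$ so as to assume $h^2(\h)\geq h^2(\g_3)$ (equivalently $h^1(\h)\geq h^1(\g_3)$, by Poincar\'e duality for unimodular three-dimensional Lie algebras), dispose of the non-solvable case via Corollary~\ref{co:nonsolvable}, and then combine Lemma~\ref{le:symplect} with Proposition~\ref{pro:existence}(a); you make the invocation of Lemma~\ref{le:symplect} explicit, whereas the paper folds it into the final appeal to Proposition~\ref{pro:existence}(a).
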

\begin{proof}
We may assume that $h^2(\h)\geq h^2(\g_3)$. Moreover, we may assume that $\g_4=\h\oplus \mathbb{R}$ does admit an Abelian ideal $\uf$ of codimension $1$ since otherwise $\h\in \{\mathfrak{so}(3),\mathfrak{so}(2,1)\}$ and Corollary \ref{co:nonsolvable} gives us the affirmative answer. By K\"unneth's formula, $h^1(\h\oplus \mathbb{R})=h^1(\h)+1$ and $h^2(\h\oplus \mathbb{R})=h^2(\h)+h^1(\h)$. Thus
\begin{equation*}
\begin{split}
h^1(\h\oplus \mathbb{R})+h^1(\uf)+h^2(\g_3)-h^2(\h\oplus \mathbb{R})&=h^1(\h)+1+3+h^2(\g_3)-h^2(\h)-h^1(\h)\\
&=h^2(\g_3)-h^2(\h)+4\leq 4,
\end{split}
\end{equation*}
and Proposition \ref{pro:existence} (a) implies the statement.
\end{proof}
\subsection{$\g_4$ unimodular, $\g_3$ not unimodular}\label{subsec:thirdcase}
In this subsection, we prove Theorem \ref{th:main} (c). In the following, $\g=\g_4\op \g_3$ always denotes a seven-dimensional Lie algebra which is the Lie algebra direct sum of a four-dimensional unimodular Lie algebra $\g_4$ and of a three-dimensional non-unimodular Lie algebra $\g_3$.

We start with the case that $\g_4$ is almost Abelian. Then Proposition \ref{pro:obstruct3dnotunimod} (a) implies that if $\g_3\neq \mathfrak{r}_2\op\bR$, then $\g$ does not admit a cocalibrated $\G_2$-structure. So, in this case, it remains to consider sums of the form $\g_4\op \mathfrak{r}_2\op \bR$. This is done in Theorem \ref{th:5d+r2} which tells us more generally when a direct sum of the form $\g=\h\op\mathfrak{r}_2$ where $\h$ is a five-dimensional almost Abelian Lie algebra possesses a cocalibrated $\G_2$-structure. For the proof of this theorem, we need the following
\begin{lemma}\label{le:5d+r2}
Let $\g=\g_5\op \mathfrak{r}_2$ be a Lie algebra direct sum of a five-dimensional unimodular almost Abelian Lie algebra $\g_5$ and $\mathfrak{r}_2$. Let $\af$ be an Abelian ideal of dimension four in $\g_5$. Choose $e_5\in \h\backslash\af$ and $e^5\in\Ann{\af}\subseteq \h^*$, $e^5(e_5)=1$. Then $\g_5$ admits a cocalibrated $\G_2$-structure if and only if there exist two linearly independent two-forms $\omega_1,\,\omega_2\in \L^2 \af^*\cong \L^2 \Ann{e_5}$ such that each non-zero linear combination is of length two and such that $d\omega_1=\omega_2\wedge e^5$.
\end{lemma}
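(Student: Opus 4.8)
The statement should be read as ``$\g:=\g_5\op\mathfrak{r}_2$ admits a cocalibrated $\G_2$-structure''. The plan is to argue in the spirit of the proof of Proposition~\ref{pro:obstruct3dnotunimod}~(b). I would first fix the data: by Lemma~\ref{le:codimensiononeideal} there is a trace-free $f\in\mathfrak{gl}(\af^*)$ with $d\alpha=f(\alpha)\wedge e^5$ for all $\alpha\in\af^*$ and $de^5=0$ (trace-free because $\g_5$ is unimodular), and I pick a basis $e^6,e^7$ of $\mathfrak{r}_2^*$ with $de^6=e^{67}$, $de^7=0$. Then I grade $\L^*\g^*$ by $\L^{i,j,k,l}:=\L^i\af^*\wedge(e^5)^{\wedge j}\wedge(e^6)^{\wedge k}\wedge(e^7)^{\wedge l}$, so that $d(\L^{i,j,k,l})\subseteq\L^{i,j+1,k,l}+\L^{i,j,k,l+1}$, and write $\theta^{i,j,k,l}$ for the corresponding component of a form $\theta$. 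Note that for $\omega\in\L^2\af^*$ one has $d\omega=(f.\omega)\wedge e^5\in\L^{2,1,0,0}$.

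For the ``if'' direction, given $\omega_1,\omega_2$ as in the statement I would first observe that $d\omega_1=\omega_2\wedge e^5=(f.\omega_1)\wedge e^5$ forces $\omega_2=f.\omega_1$, and that $\tr f=0$ gives $\omega_1\wedge\omega_2=\omega_1\wedge f.\omega_1=\frac{1}{2}\,f.(\omega_1^2)=0$ in $\L^4\af^*$; hence by Lemma~\ref{le:subspaceslengthtwo} the matrix of $\omega_1,\omega_2$ is definite. Proposition~\ref{pro:Hodgedualbytwoforms}, applied with $V_4:=\af$, $V_3:=\spa{e_5,e_6,e_7}$ and $k=2$, then produces $\omega_3\in\L^2\af^*$ such that
\[
\Psi:=\frac{1}{2}\,\omega_1^2+\omega_1\wedge e^{67}+\omega_2\wedge e^{56}+\omega_3\wedge e^{57}
\]
is the Hodge dual of a $\G_2$-structure with adapted splitting $\g=\af\op V_3$ (here $e^{67},e^{56},e^{57}$ is the chosen basis of $\L^2 V_3^*$). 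It remains to check $d\Psi=0$: using $\omega_1\wedge\omega_2=0$, $de^{67}=0$, $de^{56}=-e^{567}$, $de^{57}=0$ and $d\omega_i\in\L^2\af^*\wedge e^5$, a one-line computation gives $d\Psi=\omega_2\wedge e^{567}-\omega_2\wedge e^{567}=0$, so the induced left-invariant $\G_2$-structure is cocalibrated.

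For the ``only if'' direction, let $\Psi\in\L^4\g^*$ be the closed Hodge dual of a cocalibrated $\G_2$-structure. As in Proposition~\ref{pro:obstruct3dnotunimod}~(b), inspecting $(d\Psi)^{3,0,1,1}$ (which equals $-\zeta\wedge e^{67}$ if $\Psi^{3,0,1,0}=\zeta\wedge e^6$, $\zeta\in\L^3\af^*$) gives $\Psi^{3,0,1,0}=0$. The essential choice is now to contract by $v:=e_6$: with $W:=\af\op\spa{e_5,e_7}$ and $\rho:=(e_6\hook\Psi)|_W\in\L^3 W^*$, computing the relevant interior products and using $\Psi^{3,0,1,0}=0$ yields $\rho=B_1\wedge e^5+B_2\wedge e^7+c\wedge e^{57}$ for unique $B_1,B_2\in\L^2\af^*$ and $c\in\af^*$; the vanishing of $\Psi^{3,0,1,0}$ is exactly what removes the would-be $\L^3\af^*$-summand of $\rho$, so that $\rho$ lies in the subspace $\L^2\af^*\wedge\spa{e^5,e^7}\op\af^*\wedge\L^2\spa{e^5,e^7}$ to which Lemma~\ref{le:alginv2}~(b) applies. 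By Lemma~\ref{le:alginv1}~(b) we have $\rk\rho=6$ and $r(\rho)=2$, so Lemma~\ref{le:alginv2}~(b) (with $V_4:=\af$, $V_2:=\spa{e_5,e_7}$ and the basis $e^5,e^7$ of $V_2^*$) shows that $B_1,B_2$ span a two-dimensional subspace of $\L^2\af^*$ in which every non-zero element has length two; in particular they are linearly independent. Finally I would extract the bracket relation from the $\L^{2,1,1,1}$-component of $d\Psi=0$: its only contributions are $(f.B_2)\wedge e^{567}$ from $d(\Psi^{2,0,1,1})$ (using $dB_2=(f.B_2)\wedge e^5$) and $-B_1\wedge e^{567}$ from $d(\Psi^{2,1,1,0})$ via $de^6$, whence $f.B_2=B_1$, i.e.\ $dB_2=B_1\wedge e^5$. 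Thus $\omega_1:=B_2$, $\omega_2:=B_1$ are as required.

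The interior-product identities and the componentwise differentials are routine. The main obstacle is in the ``only if'' direction: one must hit on the correct contraction (by $e_6$, rather than by $e_5$ or $e_7$) so that the identity $\Psi^{3,0,1,0}=0$ places $\rho$ in the subspace governed by Lemma~\ref{le:alginv2}~(b), and then isolate the single graded piece $(d\Psi)^{2,1,1,1}$ of $d\Psi=0$ that encodes the structure relation $B_1=f.B_2$.
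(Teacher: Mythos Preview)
Your argument is correct and follows essentially the same route as the paper: in the ``if'' direction you build the identical Hodge dual $\Psi=\tfrac12\omega_1^2+\omega_1\wedge e^{67}+\omega_2\wedge e^{56}+\omega_3\wedge e^{57}$, and in the ``only if'' direction the paper also splits off $e^6$ (writing $\Psi=\Omega+\rho\wedge e^6$, which is your contraction up to sign), shows $\rho^{3,0,0}=0$, applies Lemma~\ref{le:alginv1}/\ref{le:alginv2}, and reads off the relation $d\omega_1=\omega_2\wedge e^5$ from the graded piece you call $(d\Psi)^{2,1,1,1}$. One small slip: with your conventions $\Psi^{2,1,1,0}=(-B_1)\wedge e^{56}$, so the $de^6$-contribution to $(d\Psi)^{2,1,1,1}$ is $+B_1\wedge e^{567}$, giving $f.B_2=-B_1$; thus take $\omega_1:=B_2$, $\omega_2:=-B_1$.
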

\begin{proof}
Let $e^6,e^7$ be a basis of $\mathfrak{r}_2^*$ such that $de^6=e^{67}$, $de^7=0$. Assume first that $\g$ admits a cocalibrated $\G_2$-structure $\varphi\in \L^3 \g^*$ with (closed) Hodge dual $\Psi\in \L^4 \g^*$.
Decompose $\Psi$ uniquely into
\begin{equation*}
\Psi=\Omega + \rho\wedge e^6
\end{equation*}
with $\Omega\in \L^4 (\g_5^*\op \spa{e^7})$, $\rho\in \L^3 (\g_5^*\op \spa{e^7})$. Since $d\Omega\in \L^5(\g_5^*\op \spa{e^7})$ and $d(\rho\wedge e^6)\in \L^4(\g_5^*\op \spa{e^7})\wedge e^6$, the identities $d\Omega=0=d(\rho\wedge e^6)$ are true.

Set $\L^{i,j,k}:=\L^i  \af^*\wedge \L^j \spa{e^5}\wedge \L^k \spa{e^7}$. For an $s:=(i+j+k)$-form $\theta\in \L^s(\g_5^*\op \spa{e^7})$ let $\theta^{i,j,k}$ be the projection of $\theta$ onto $\L^{i,j,k}$. Lemma \ref{le:codimensiononeideal} implies $d(\L^{i,0,k})\subseteq \L^{i,1,k}$ and $d(\L^{i,1,k})=0$ for all $i,k\in \mathbb{N}_0$.

The closure of $\rho\wedge e^6$ implies $0=d(\rho\wedge e^6)=d\rho\wedge e^6-\rho\wedge e^{67}$ and so $0=d\rho+\rho\wedge e^7$. Then the identities
\begin{equation*}
0=(d\rho+\rho\wedge e^7)^{3,0,1}=\rho^{3,0,0}\wedge e^7,\,
0=(d\rho+\rho\wedge e^7)^{2,1,1}= d(\rho^{2,0,1})+\rho^{2,1,0}\wedge e^7 
\end{equation*}
are true. Thus, $\rho^{3,0,0}=0$ and $d(\rho^{2,0,1})=-\rho^{2,1,0}\wedge e^7$. This shows that
\begin{equation*}
\rho=\omega_1\wedge e^7-\omega_2\wedge e^5+\alpha\wedge e^{57}
\end{equation*}
for $\omega_1,\,\omega_2 \in \L^{2,0,0}$, $\alpha\in \L^{1,0,0}$ and that
\begin{equation*}
\omega_2\wedge e^{57}=-\rho^{2,1,0}\wedge e^7=d(\rho^{2,0,1})=d(\omega_1\wedge e^7)=d\omega_1\wedge e^7\,\, \Leftrightarrow\,\,
d\omega_1=\omega_2\wedge e^5.
\end{equation*}
By Lemma \ref{le:alginv1}, $r(\rho)=2$ and Lemma \ref{le:alginv2} yields that $V:=\spa{\omega_1,\omega_2}$ is two-dimensional and each non-zero element in $V$ has length two.

Conversely, let $\omega_1,\,\omega_2\in \L^2 \af^*$ be such that $d\omega_1=\omega_2\wedge e^5$ and such that $\omega_1$, $\omega_2$ are linearly independent and each non-zero linear combination of them is of length two. Set $V_4:=\af^*$, $V_3:=\spa{e^5}\op \mathfrak{r}_2^*$, $\nu_1:=e^{67}\in \L^2 V_3$, $\nu_2:=e^{56}\in \L^2 V_3$, $\nu_3:=e^{57}\in \L^2 V_3$. By Lemma \ref{le:subspaceslengthtwo} and Proposition \ref{pro:Hodgedualbytwoforms}, there exists a two-form $\omega_3\in \L^2 \af^*$ such that
\begin{equation*}
\Psi:=\sum_{i=1}^3 \omega_i\wedge \nu_i+\frac{1}{2}\omega_1^2
\end{equation*}
is the Hodge dual of a $\G_2$-structure. By Lemma \ref{le:codimensiononeideal}, $d(\L^4 \af^*)=0$ and $d(\L^k\af^*\wedge e^5)=0$ for all $k\in \mathbb{N}_0$. Using these properties of $d$, a short computation shows that $\Psi$ is closed.
\end{proof}
Lemma \ref{le:5d+r2} allows us to prove
\begin{theorem}\label{th:5d+r2}
Let $\g=\h\op \mathfrak{r}_2$ be a Lie algebra direct sum of a five-dimensional almost Abelian Lie algebra $\h$ and of $\mathfrak{r}_2$. Then $\g$ admits a cocalibrated $\G_2$-structure if and only if $\h$ is unimodular and $\h\notin \left\{\bR^5, \h_3\op \bR^2, A_{5,7}^{-1/3,-1/3, -1/3}\right\}$.
\end{theorem}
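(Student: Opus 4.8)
The plan is to dispose of non‑unimodular $\h$ by a direct obstruction and to reduce the unimodular case, via Lemma~\ref{le:5d+r2}, to a statement about a single quadratic form on $\L^2\af^*$. If $\h$ is not unimodular, then $\g=\h\op\mathfrak{r}_2$ is of the shape covered by Proposition~\ref{pro:obstruct3dnotunimod}~(b) (with $\g_5=\h$), so $\g$ admits no cocalibrated $\G_2$‑structure. So assume $\h$ is unimodular. Fix a codimension‑one Abelian ideal $\af\subseteq\h$, an element $e_5\in\h\setminus\af$ and $e^5\in\Ann{\af}$ with $e^5(e_5)=1$; by Lemma~\ref{le:codimensiononeideal} there is $f\in\mathfrak{gl}(\af^*)$ with $d\alpha=f(\alpha)\wedge e^5$ for $\alpha\in\af^*$, hence $d\omega=(f.\omega)\wedge e^5$ for $\omega\in\L^2\af^*$, where $f.$ is the natural derivation action. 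Unimodularity of $\h$ is the condition $\tr f=0$, so $f.$ acts as $0$ on $\L^4\af^*$; in particular $\omega\wedge(f.\omega)=\frac12 f.(\omega\wedge\omega)=0$ for all $\omega\in\L^2\af^*$. By Lemma~\ref{le:5d+r2} combined with Lemma~\ref{le:subspaceslengthtwo}, and using this vanishing, $\g$ admits a cocalibrated $\G_2$‑structure if and only if there is $\omega\in\L^2\af^*$ with $\omega^2\neq0$, $(f.\omega)^2\neq0$, and $\omega^2$, $(f.\omega)^2$ of the same orientation in $\L^4\af^*\cong\bR$ (linear independence of $\omega,f.\omega$ being then automatic).

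Identify $\L^4\af^*\cong\bR$ and put $Q_0(\omega):=\omega^2$, $Q_1(\omega):=(f.\omega)^2$, quadratic forms on the six‑dimensional space $\L^2\af^*$. Choosing a basis of $\af^*$ one computes $\omega^2=2(a_{12}a_{34}-a_{13}a_{24}+a_{14}a_{23})\tau$, so $Q_0$ is non‑degenerate of signature $(3,3)$. Writing $F:=f|_{\L^2\af^*}$, the identity $f.(\omega\wedge\eta)=0$ says that $F$ is skew for the polarisation of $Q_0$, and polarising $0=f.\bigl((f.\omega)\wedge\omega\bigr)$ gives $(f.\omega)^2=-(F^2\omega)\wedge\omega$, i.e.\ $Q_1$ is the quadratic form of the $Q_0$‑self‑adjoint operator $-F^2$. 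Now the condition of the previous paragraph fails exactly when $Q_0(\omega)Q_1(\omega)\le0$ for every $\omega$; since $Q_0$ is indefinite and non‑degenerate, $\{Q_0=0\}$ lies in the closure of each of the open cones $\{Q_0>0\}$, $\{Q_0<0\}$, so by continuity $Q_1\equiv0$ on $\{Q_0=0\}$, and as $Q_1$ has degree two and vanishes on the irreducible quadric $\{Q_0=0\}$ we get $Q_1=\lambda Q_0$ with necessarily $\lambda\le0$. Hence $\g$ admits a cocalibrated $\G_2$‑structure if and only if $F^2$ is \emph{not} a non‑negative multiple of $\id_{\L^2\af^*}$.

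It remains to show that $F^2=\mu\,\id$ with $\mu\ge0$ occurs exactly for $\h\in\{\bR^5,\h_3\op\bR^2,A_{5,7}^{-1/3,-1/3,-1/3}\}$. If $\mu=0$ then $F$ is nilpotent, which forces $f$ nilpotent (the eigenvalues of $F$ are the pairwise sums of those of $f$, and these all vanish only if all eigenvalues of $f$ vanish); one then checks $F^2=0$ precisely for $f=0$, i.e.\ $\h=\bR^5$, and for $f$ of rank one, i.e.\ $\h=\h_3\op\bR^2$, whereas $\rk f\ge2$ gives $F^2\neq0$. If $\mu>0$ then $F$ is semisimple with eigenvalues $\pm\sqrt\mu$; a non‑trivial Jordan block or a non‑real eigenvalue of $f$ would produce one in $F$, so $f$ is semisimple with real eigenvalues $\lambda_1,\dots,\lambda_4$ summing to $0$ and satisfying $(\lambda_i+\lambda_j)^2=\mu$ for all $i<j$, and a short computation using $\sum_i\lambda_i=0$ shows that this forces $(\lambda_1,\dots,\lambda_4)$ proportional to $(3,-1,-1,-1)$, i.e.\ $\h=A_{5,7}^{-1/3,-1/3,-1/3}$. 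All three exceptional algebras are unimodular, so together with the first paragraph this proves the theorem.

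The main obstacle is twofold. First, the step ``$Q_0Q_1\le0$ everywhere $\Rightarrow Q_1=\lambda Q_0$'' must be justified carefully, resting on the indefiniteness and irreducibility of $Q_0$ (signature $(3,3)$). Second, and more laborious, is the case analysis of the last paragraph: running through the real Jordan types of traceless $4\times4$ matrices—equivalently, through the five‑dimensional unimodular almost‑Abelian Lie algebras—and computing $F^2$ in each, so as to confirm that $F^2\in\bR_{\ge0}\cdot\id$ isolates exactly $\bR^5$, $\h_3\op\bR^2$ and $A_{5,7}^{-1/3,-1/3,-1/3}$ and thereby matches the exclusions in the statement.
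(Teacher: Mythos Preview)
Your proof is correct and takes a genuinely different route from the paper's. Both begin identically---Proposition~\ref{pro:obstruct3dnotunimod}~(b) forces unimodularity of $\h$, and Lemma~\ref{le:5d+r2} reduces the question to finding $\omega\in\L^2\af^*$ such that $\omega$ and $f.\omega$ span a plane in which every nonzero element has length two---but then diverge. The paper proceeds constructively: it first handles the case where $\af^*$ splits into two $f$-invariant planes on neither of which $f$ is scalar, building explicit $\omega_1,\omega_2$ there, and then runs through the remaining real Jordan types of a traceless $f$ one by one, exhibiting concrete two-forms in the good cases and checking directly that none exist in the three bad ones. You instead recast the existence condition as $Q_0(\omega)Q_1(\omega)>0$ for the wedge-square form $Q_0$ and its pullback $Q_1$ under $F=\L^2 f$, observe that $F$ is $Q_0$-skew so that $Q_1$ is the form of the $Q_0$-self-adjoint operator $-F^2$, and argue that global failure forces $Q_1$ to be a non-positive multiple of $Q_0$, i.e.\ $F^2\in\bR_{\ge0}\cdot\id_{\L^2\af^*}$. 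The classification then becomes a clean spectral condition on $F$, which you unwind.

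What each approach buys: the paper's method is elementary and produces explicit cocalibrated structures, at the cost of tracking several Jordan types separately. Your argument is more conceptual and nearly case-free; the only residual case analysis is the identification of which traceless $f$ satisfy $F^2\in\bR_{\ge0}\cdot\id$, and even that is governed by short spectral constraints rather than ad hoc constructions. The two points you flag as obstacles are precisely where care is needed. For the first, the passage from ``$Q_0Q_1\le0$ everywhere'' to ``$Q_1=\lambda Q_0$'' uses that a quadratic polynomial vanishing on the real null cone of a non-degenerate indefinite form in at least three variables lies in the ideal generated by that form (the real points being Zariski-dense in the irreducible complex quadric); this is standard but worth stating. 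For the second, you implicitly use that $\L^2:\mathfrak{gl}(\af^*)\to\mathfrak{gl}(\L^2\af^*)$ is injective and respects Jordan decomposition, so that $F$ semisimple with real spectrum forces the same of $f$; together with the eigenvalue relation $\mathrm{spec}(F)=\{\lambda_i+\lambda_j:i<j\}$ this pins down the three exceptions exactly as claimed.
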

\begin{proof}
By Proposition \ref{pro:obstruct3dnotunimod} (b), $\h$ has to be unimodular if $\g$ admits a cocalibrated $\G_2$-structure.
So, for the rest we assume that $\h$ is unimodular and let $e_5\in \h\backslash \mathfrak{a}$, $e^5\in \Ann{\mathfrak{a}}\subseteq \h^*$, $e^5(e_5)=1$. By Lemma \ref{le:codimensiononeideal}, there exists a linear trace-free map $H:\af^*\rightarrow \af^*$ such that $d \alpha=H(\alpha)\wedge e^5$, $de^5=0$ for all $\alpha\in \af^*$. Let $e^6,e^7$ be a basis of $\mathfrak{r}_2^*$ with $de^6=e^{67}$, $de^7=0$. Then Lemma \ref{le:5d+r2} tells us that $\g$ admits a cocalibrated $\G_2$-structure if and only if there are two linearly independent two-forms $\omega_1,\,\omega_2\in \L^2 \mathfrak{a}^*$ such that $d\omega_1=\omega_2\wedge e^5$ and such that each non-zero linear combination is of length two.

We first prove that such a pair of two-forms always exists if there is a vector decomposition $\af^*=V_2\op W_2$ into two two-dimensional $H$-invariant subspaces such that the restrictions of $H$ to $V_2$ and to $W_2$ are both not a multiple of the identity. In this case, we may choose for each $\lambda\neq 0$ a basis $e^1,e^2$ of $V_2$ and a basis $e^3,e^4$ of $W_2$ such that the restrictions of $H$ to $V_2$ and $W_2$ with respect to the corresponding bases are given by
\begin{equation*}
\begin{pmatrix}
 0 & -\frac{\det(H|_{V_2})}{\lambda} \\
 \lambda  & \tr(H|_{V_2})
\end{pmatrix}
\, \textrm{ and } \,
\begin{pmatrix}
\tr(H|_{W_2}) & - \lambda \\
 \frac{\det(H|_{W_2})}{\lambda} & 0
\end{pmatrix},
\end{equation*}
respectively. Set $\omega_1:=e^{14}+e^{23}$. Then $\omega_1$ is of length two and $d\omega_1=\bigr(\lambda(e^{13}-e^{24})+\omega_3\bigl)\wedge e^5$ with $\omega_3:=d e^{23}\in \L^2 \af^*$. Set $\omega_2:=\lambda(e^{13}-e^{24})+\omega_3$ and observe that $d\omega_1=\omega_2\wedge e^5$ and
\begin{equation*}
\omega_1\wedge \omega_2=e_5\hook(\omega_1\wedge d\omega_1)=e_5\hook\left(d\left(\frac{1}{2}\omega_1^2\right)\right)=0
\end{equation*}
since $\g_5$ is unimodular.
Furthermore, observe that $C(\lambda)$, defined by
\begin{equation*}
\omega_2^2=\lambda^2 e^{1234}+2\lambda (e^{13}-e^{24})\wedge \omega_3+\omega_3^2=C(\lambda)\omega_1^2,
\end{equation*}
fulfills $C(\lambda)=\lambda^2+ \mathcal{O}(\lambda)$ as $\lambda\rightarrow \infty$. Thus, for $|\lambda|$ sufficiently large,
$C(\lambda)>0$ and Lemma \ref{le:subspaceslengthtwo} tells us that $\omega_1,\omega_2$ span a two-dimensional subspace in which each non-zero element is of length two. So all considered Lie algebras which admit such a splitting do admit a cocalibrated $\G_2$-structure.

Next, we assume that $\af^*$ does not admit a splitting as above and look at the possible real Jordan normal forms of $H$. Therefore, we denote by $J_m(a)\in \bR^{m\times m}$ the matrix consisting of one Jordan block of size $m$ with $a\in\bR$ on the diagonal and the $1$s are on the superdiagonal and by $M_{b,c}$ the real two-by-two matrix $\begin{pmatrix} b & c \\ -c & b \end{pmatrix}$. We get, after rescaling $e^5$, that there is a basis $e^1,e^2,e^3,e^4$ of $\af^*$ such that $H$ acts with respect to this basis as one of the following matrices:
\begin{equation*}
\begin{split}
& \begin{pmatrix} J_3(a) & A \\ 0 & -3a \end{pmatrix},\;\; \begin{pmatrix} M_{0,1} & I_2 \\ 0 & M_{0,1} \end{pmatrix},\;\;  \diag(M_{b,1},-b,-b),\;\; \diag(J_2(c),-c,-c),\\
& \diag\left(f,-f/3,-f/3,-f/3\right),\quad 
 a,\, c,\, f,\, A\in \{0,1\},\, b\in \bR^+.
\end{split}
\end{equation*}
In the first case, $\omega_1:=e^{12}+e^{34}-5 e^{23}$ and $\omega_2:=-e^{24}+2a (-e^{12}+e^{34})+10 a e^{23}+5 e^{13}$ fulfil all desired conditions. In the second case, we may choose $\omega_1:=e^{12}-e^{34}$ and $\omega_2:=e^{14}-e^{23}$ and in the third case, $\omega_1:=e^{13}-e^{24}$ and $\omega_2:=e^{14}+e^{23}$ do the job. In the fourth case, we start with $c=1$. Then $\omega_1:=e^{13}-e^{24}- \frac{1}{2}\left(e^{12}-e^{34}\right)$, $\omega_2:=e^{12}+e^{34}+e^{14}$ fulfil all desired conditions. If $c=0$, then $\h=\h_3 \op \bR^2$ and we already know by Proposition \ref{pro:obstruct3dunimod} that $\g= \mathfrak{r}_2\op \bR^2 \op \h_3$ does not admit a cocalibrated $\G_2$-structure. However, this also follows easily from the fact that in this case $d(\L^2 \af^*)=\spa{ e^{135},e^{145} }$. In the last case, let $\omega_1\in \L^2 \af^*$ be of length two. Then there exist $\alpha\in  \spa{e^2,e^3,e^4}$ and $\omega\in \L^2 \spa{e^2,e^3,e^4}$ such that $\omega_1=\omega+\alpha\wedge e^1$. But then $d\omega_1=\frac{2}{3} f \left(\omega-\alpha\wedge e^1\right)\wedge e^5$, i.e. $\omega_2=\frac{2}{3} f\left(\omega-\alpha\wedge e^1\right)$ and so $\frac{2}{3} f \omega_1+\omega_2=\frac{4}{3} f \omega$ is of length one. Thus, $\g$ does not admit a cocalibrated $\G_2$-structure in this case, i.e. if $\h\in \left\{\bR^5,A_{5,7}^{-1/3,-1/3, -1/3}\right\}$.
\end{proof}
\begin{remark}
We like to note an interesting consequence of Theorem \ref{th:5d+r2}. It is well-known, cf. e.g. \cite{Sto}, that a half-flat $\SU(3)$-structure on $\mathfrak{b}$ naturally induces a cocalibrated $\G_2$-structure on the seven-dimensional Lie algebra $\g=\mathfrak{b}\op \bR$ such that $\mathfrak{b}$ and $\bR$ are orthogonal to each other. Conversely, a cocalibrated $\G_2$-structure on a seven-dimensional Lie algebra $\g=\mathfrak{b}\op \bR$ for which $\mathfrak{b}$ and $\bR$ are orthogonal induces a half-flat $\SU(3)$-structure on $\mathfrak{b}$. So far, there seems to be no example known in the literature of a seven-dimensional Lie algebra $\g=\mathfrak{b}\op \bR$ which admits a cocalibrated $\G_2$-structure such that $\mathfrak{b}$ does not admit a half-flat $\SU(3)$-structure. But now Theorem \ref{th:5d+r2} provides us with an example. Namely $\g=A_{4,5}^{-1/2,-1/2}\op \mathfrak{r}_2\op \bR$ admits a cocalibrated $\G_2$-structure due to Theorem \ref{th:5d+r2} but in \cite{FS} it is shown that $\mathfrak{b}=A_{4,5}^{-1/2,-1/2}\op \mathfrak{r}_2$ does not admit a half-flat $\SU(3)$-structure.
\end{remark}
The only unimodular four-dimensional Lie algebras which are not almost Abelian are the two non-solvable ones $\so(3)\op \bR$, $\so(2,1)\op \bR$ and the two whose commutator ideal $\uf$ is isomorphic to $\h_3$, namely $A_{4,8},\,A_{4,10}$. Direct sums with the non-solvable four-dimensional Lie algebras admit cocalibrated $\G_2$-structures by Corollary \ref{co:nonsolvable}. Direct sums with $A_{4,8},\,A_{4,10}$ admit cocalibrated $\G_2$-structures by Proposition \ref{pro:existence} (c) if $h^1(\g_3)\geq 1$ (note that $h^1(\g_4)-h^2(\g_4)=1$ for $\g_4\in\{A_{4,8},A_{4,10}\}$ by Table \ref{table:4d}). This finishes the proof of Theorem \ref{th:main} (c).
\subsection{$\g_4$ not unimodular, $\g_3$ not unimodular}\label{subsec:fourthcase}
In this subsection, we prove Theorem \ref{th:main} (d). In the following, $\g=\g_4\op \g_3$ always denotes a seven-dimensional Lie algebra which is the Lie algebra direct sum of a four-dimensional non-unimodular Lie algebra $\g_4$ and of a three-dimensional non-unimodular Lie algebra $\g_3$. Furthermore, $\uf$ should always denote the unimodular kernel of $\g_4$

By Proposition \ref{pro:obstruct3dnotunimod} (a), $\g$ does not admit a cocalibrated $\G_2$-structure if $\g_4$ is almost Abelian, i.e. if $\uf$ is Abelian. If $\uf\in \{e(2),e(1,1)\}$, then $\g_4\in \{A_{4,12},\mathfrak{r}_2\op \mathfrak{r}_2\}$ and Proposition \ref{pro:existence} (b) and Lemma \ref{le:contact} imply that $\g$ admits a cocalibrated $\G_2$-structure unless $\g_3=\mathfrak{r}_{3,1}$. But for $\g=A_{4,12}\op \mathfrak{r}_{3,1}$ and $\g=\mathfrak{r}_2\op \mathfrak{r}_2\op\mathfrak{r}_{3,1}$ cocalibrated $\G_2$-structures can be found in Table \ref{table:examples}.

Therefore, it remains to consider the case when the unimodular ideal $\uf$ is isomorphic to $\h_3$.  Then Lemma \ref{le:4dbasis} tells us that we may decompose $\g_4^*=\spa{e^1}\op V_2\op \spa{e^4}$ with $e^1,e^4\neq 0$ and $\dim{V_2}=2$ such that $de^1=\tr(F)e^{14}+\nu$ for $0\neq\nu\in \L^2 V_2$, such that for all $\alpha\in V_2$ the identity $d\alpha=F(\alpha)\wedge e^4$ for some linear map $F:V_2\rightarrow V_2$ with $\tr(F)\neq 0$ is true and such that $de^4=0$. Moreover, by Lemma \ref{le:3d}, we may decompose $\g_3^*=W_2\op \spa{e^7}$ with $0\neq e^7$ and $W_2$ two-dimensional such that for all $\beta\in W_2$ the identity $d\beta=G(\beta)\wedge e^7$ for some linear map $G:W_2\rightarrow W_2$ with $\tr(G)\neq 0$ is true and such that $de^7=0$.
\begin{proposition}\label{pro:technicalities}
Let $\g$, $\g_4$, $\g_3$, $\uf$, $e^1,\,e^4\in \g_4^*\backslash \{0\}$, $e^7\in \g_3^*\backslash \{0\}$, $V_2\subseteq \g_4^*$, $W_2\subseteq \g_3^*$ and $\nu\in \L^2 V_2$ as above. Then $\g$ admits a cocalibrated $\G_2$-structure if and only if there are two linearly independent two-forms $\omega_1,\omega_2\in  V_2\wedge  W_2$, a non-zero two-form $\hat{\nu}\in \L^2 W_2$ and some $\lambda\in \bR$ such that the following conditions are fulfilled:
\begin{enumerate}
\item[(i)]
$d(\omega_1\wedge e^{71}+\omega_2\wedge e^{41})=0$.
\item[(ii)]
The two-forms $\tilde{\omega}_1:=\hat{\nu}+\omega_1$, $\tilde{\omega}_2:=\frac{\tr(F)}{\tr(G)}\hat{\nu}+\lambda\nu+\omega_2$ are linearly independent and each non-zero linear combination is of length two.
\end{enumerate}
\end{proposition}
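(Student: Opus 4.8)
The statement is an equivalence, and both implications require genuine work; I address each in turn.

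\emph{Sufficiency.} Given $\omega_1,\omega_2\in V_2\wedge W_2$, $\hat{\nu}\in\L^2 W_2\backslash\{0\}$ and $\lambda\in\bR$ satisfying (i) and (ii), the plan is to produce a closed Hodge dual on the adapted splitting $\g=V_4\op V_3$ with $V_4^*:=V_2\op W_2$ and $V_3^*:=\spa{e^1,e^4,e^7}$. By (ii) and Lemma \ref{le:subspaceslengthtwo} the symmetric matrix associated with $\tilde{\omega}_1,\tilde{\omega}_2$ is definite, so Proposition \ref{pro:Hodgedualbytwoforms} (applied with $k=2$, $\omega_1=\tilde{\omega}_1$, $\omega_2=\tilde{\omega}_2$) supplies a two-form $\tilde{\omega}_3\in\L^2 V_4^*$ such that, for the basis $\nu_1:=e^{71}$, $\nu_2:=e^{41}$, $\nu_3:=e^{47}$ of $\L^2 V_3^*$, the four-form
\begin{equation*}
\Psi:=\tfrac{1}{2}\tilde{\omega}_1^2+\tilde{\omega}_1\wedge e^{71}+\tilde{\omega}_2\wedge e^{41}+\tilde{\omega}_3\wedge e^{47}
\end{equation*}
is the Hodge dual of a $\G_2$-structure with this adapted splitting. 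It then remains only to arrange $d\Psi=0$. The key observation is that the quadratic form whose definiteness encodes the length-two part of (ii) depends on $\hat{\nu}$ and $\lambda$ only through the product of $\lambda$ with the scaling factor of $\hat{\nu}$; since (ii) in particular forces $\omega_1$ to be of length two, one may therefore rescale $\hat{\nu}$ and adjust $\lambda$ accordingly -- without destroying (i) or (ii) -- so that $\tfrac{1}{2}\tilde{\omega}_1^2=-\tfrac{1}{\tr(G)}\,\nu\wedge\hat{\nu}$. A direct computation of $d\Psi$ then shows that everything cancels: $d(\omega_1\wedge e^{71}+\omega_2\wedge e^{41})=0$ is exactly (i); $d(\nu\wedge e^{41})=0$ and $d(\tilde{\omega}_3\wedge e^{47})=0$; the terms proportional to $\hat{\nu}\wedge e^{147}$ arising from $d(\hat{\nu}\wedge e^{71})$ and $d(\hat{\nu}\wedge e^{41})$ cancel because of the coefficient $\tfrac{\tr(F)}{\tr(G)}$ in $\tilde{\omega}_2$; and the terms proportional to $\nu\wedge\hat{\nu}\wedge e^4$ and $\nu\wedge\hat{\nu}\wedge e^7$ arising from $d(\tfrac{1}{2}\tilde{\omega}_1^2)$, $d(\hat{\nu}\wedge e^{71})$ and $d(\hat{\nu}\wedge e^{41})$ cancel because of the chosen normalisation. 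Hence $\g$ admits a cocalibrated $\G_2$-structure.

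\emph{Necessity.} Conversely, assume $\g$ carries a cocalibrated $\G_2$-structure $\varphi$ and set $\Psi:=\star_{\varphi}\varphi$, so $d\Psi=0$. Let $e_1\in\g$ be the vector with $e^1(e_1)=1$ that annihilates $V_2\op\spa{e^4}\op W_2\op\spa{e^7}$, let $W\subseteq\g$ be the annihilator of $e^1$, and write $\Psi=\Omega+e^1\wedge\rho$ with $\Omega:=\Psi|_W\in\L^4 W^*$ and $\rho:=(e_1\hook\Psi)|_W\in\L^3 W^*$. By Lemma \ref{le:alginv1}(b),(c) one has $l(\Omega)=3$, $r(\Omega)=1$, $l(\rho)=3$, $r(\rho)=2$ and $\rk(\rho)=6$. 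The plan is then to run a bookkeeping argument in the five-fold grading $\L^{i,j,k,l,m}:=\L^i\spa{e^1}\wedge\L^j V_2\wedge\L^k\spa{e^4}\wedge\L^l W_2\wedge\L^m\spa{e^7}$, whose differential obeys $d(\L^{i,j,k,l,m})\subseteq\L^{i,j,k+1,l,m}+\L^{i,j,k,l,m+1}+\L^{i-1,j+2,k,l,m}$ by Lemmas \ref{le:4dbasis} and \ref{le:3d}. Inspecting the relevant homogeneous components of $d\Psi=0$ -- in the spirit of the proofs of Lemma \ref{le:5d+r2} and Proposition \ref{pro:obstruct3dnotunimod} -- one eliminates all components of $\Psi$ outside a short list and extracts the linear relations between the survivors, using Lemma \ref{le:alginv1} together with Lemma \ref{le:alginv2}(a) to rule out the degenerate configurations (for instance, vanishing of the $\L^{0,2,0,2,0}$-component of $\Omega$, which would be incompatible with $l(\Omega)=3$). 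The outcome is that $\rho=-\tilde{\omega}_1\wedge e^7-\tilde{\omega}_2\wedge e^4$ for some $\tilde{\omega}_1,\tilde{\omega}_2\in\L^2(V_2\op W_2)$; decomposing these along the grading identifies forms $\omega_1,\omega_2\in V_2\wedge W_2$, a non-zero $\hat{\nu}\in\L^2 W_2$ and a scalar $\lambda\in\bR$ with $\tilde{\omega}_1=\hat{\nu}+\omega_1$ and $\tilde{\omega}_2=\tfrac{\tr(F)}{\tr(G)}\hat{\nu}+\lambda\nu+\omega_2$, the coefficient $\tfrac{\tr(F)}{\tr(G)}$ being forced by one further homogeneous component of $d\Psi=0$. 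Condition (i) then records the vanishing of the single graded component of $d\Psi$ to which $d(\omega_1\wedge e^{71}+\omega_2\wedge e^{41})$ contributes, and condition (ii) follows by applying Lemma \ref{le:alginv2}(b) to $\rho$, viewed via the splitting $W^*=(V_2\op W_2)\op\spa{e^4,e^7}$ with $\alpha_1=e^7$ and $\alpha_2=e^4$: this yields precisely that $\tilde{\omega}_1,\tilde{\omega}_2$ span a two-dimensional subspace of $\L^2(V_2\op W_2)$ in which every non-zero element is of length two.

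\emph{Expected main obstacle.} The hard part is the bookkeeping in the necessity direction. Because $de^1$ has the two summands $\tr(F)e^{14}$ and $\nu$, the graded components of $\Psi$ are coupled in a more intricate way than in Lemma \ref{le:5d+r2}, and one has to treat separately the subcases in which $F$ or $G$ fails to be invertible; it is exactly there that the rank and length constraints of Lemma \ref{le:alginv1} and Lemma \ref{le:alginv2} are indispensable in order to force $\Psi$ into the adapted shape above. By comparison, the sufficiency direction is a finite computation whose only delicate point is the normalisation of $\hat{\nu}$.
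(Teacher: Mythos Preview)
Your strategy matches the paper's in both directions, and the sufficiency argument is correct (your rescaling of $\hat\nu$ with simultaneous adjustment of $\lambda$ achieves exactly what the paper does by instead putting a factor $\tfrac{1}{\tilde\lambda}$ into the basis elements $\theta_1=e^{71}$, $\theta_2=e^{41}$; the observation that the definiteness test depends only on the product $\lambda\cdot(\text{scale of }\hat\nu)$ is the right one, since $\hat\nu\wedge\omega_i=\nu\wedge\omega_i=0$ for $\omega_i\in V_2\wedge W_2$).

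Two minor corrections to the necessity sketch, neither of which is a real gap. First, the component analysis does \emph{not} force $\rho$ all the way down to $-\tilde\omega_1\wedge e^7-\tilde\omega_2\wedge e^4$: a term $\alpha\wedge e^{47}$ with $\alpha\in V_2\oplus W_2$ survives, because no graded piece of $d\Psi=0$ sees it. This is harmless, since Lemma \ref{le:alginv2}(b) explicitly allows the $\beta\wedge\alpha_1\wedge\alpha_2$ summand, so condition (ii) still follows. Second, the non-vanishing of $\Omega^{2,2,0,0}$ (needed for $\hat\nu\neq 0$) does \emph{not} follow from $l(\Omega)=3$ via Lemma \ref{le:alginv2}(a)---that lemma requires a $3{+}3$ splitting and a one-sided $\Omega_2$, neither of which you have here. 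The paper's argument is instead: once $\rho^{2,1,0,0}$ and $\rho^{1,2,0,0}$ are shown to vanish, the restriction of $\Psi$ to the five-dimensional annihilator of $\{e^4,e^7\}$ equals $\Omega^{2,2,0,0}$; this restriction is non-zero because $r(\Psi|_{\ker e^4})=1$ by Lemma \ref{le:alginv1}(c). Finally, no case split on the invertibility of $F$ or $G$ is required anywhere in the proof of the proposition itself---those distinctions only enter later, in the applications (Lemmas \ref{le:det=0}--\ref{le:ForGidentity}).
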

\begin{proof}
``$\Rightarrow$'':\\
We set
\begin{equation*}
\Lambda^{i,j,k,l}:=\L^i V_2\wedge \L^j W_2\wedge \L^k \spa{e^4}\wedge \L^l \spa{e^7}
\end{equation*}
and denote, for an $s:=(i+j+k+l)$-form $\Phi\in \L^s (V_2\op \spa{e^4}\op \g_3^*)$, by $\Phi^{i,j,k,l}$ the projection of $\Phi$ into $\L^{i,j,k,l}$. Then we have 
\begin{equation*}
d(\L^{i,j,0,0})\subseteq \L^{i,j,1,0}+\L^{i,j,0,1},\,\, d(\L^{i,j,1,0})\subseteq \L^{i,j,1,1},\,\, d(\L^{i,j,0,1})\subseteq \L^{i,j,1,1},\,\, d(\L^{i,j,1,1})=\{0\}
\end{equation*}
for all $i,j\in \mathbb{N}_0$. Moreover, $d(\hat\mu)=-\tr(F)\hat\mu \wedge e^4$ for all $\hat\mu\in \L^{2,0,0,0}$ and $d(\tilde\mu)=-\tr(G)\tilde\mu \wedge e^7$ for all $\tilde\mu\in \L^{0,2,0,0}$.

Let $\Psi\in \L^4 (\g_4\op \g_3)^*$ be the Hodge dual of a cocalibrated $\G_2$-structure. Decompose $\Psi$ into
\begin{equation*}
\Psi=\Omega+e^1\wedge\rho
\end{equation*}
with $\Omega\in \L^4 (V_2\op\spa{e^4}\op \g_3^*)$, $\rho\in \L^3 (V_2\op \spa{e^4}\op \g_3^*)$.
Then
\begin{equation}\label{eq:technicalities}
0=d\Psi=d\Omega+(\tr(F) e^{14}+\nu)\wedge \rho-e^1\wedge d\rho= e^1\wedge (\tr(F)e^4\wedge \rho-d\rho)+d\Omega+\nu\wedge \rho
\end{equation}
implies $\Phi:=\tr(F) e^4\wedge \rho-d\rho=0$. We look at different components of $\Phi$. We have the identities
\begin{equation*}
\begin{split}
0&=\Phi^{2,1,1,0}=\tr(F) e^4\wedge \rho^{2,1,0,0}-d(\rho^{2,1,0,0})^{2,1,1,0}=\tr(F)e^4\wedge \rho^{2,1,0,0}- \tr(F) \rho^{2,1,0,0}\wedge e^4, \\
 &=2\tr(F)e^4\wedge \rho^{2,1,0,0},\\
0&=\Phi^{1,2,0,1}=-d(\rho^{1,2,0,0})^{1,2,0,1}=-\tr(G) \rho^{1,2,0,0}\wedge e^7,\\
0&=\Phi^{2,0,1,1}=\tr(F) e^4\wedge \rho^{2,0,0,1}-d(\rho^{2,0,0,1})=2\tr(F) e^4\wedge \rho^{2,0,0,1},
\end{split}
\end{equation*}
which imply $\rho^{2,1,0,0}=\rho^{1,2,0,0}=\rho^{2,0,0,1}=0$. Moreover,
\begin{equation*}
0=\Phi^{0,2,1,1}=\tr(F) e^4\wedge \rho^{0,2,0,1}-d(\rho^{0,2,1,0})= \tr(F) e^4\wedge \rho^{0,2,0,1}+ \tr(G) e^7\wedge \rho^{0,2,1,0},
\end{equation*}
i.e. $\frac{\tr(F)}{\tr(G)}e^4\wedge \rho^{0,2,0,1}= -e^7\wedge \rho^{0,2,1,0}$. Thus, $\rho$ decomposes as
\begin{equation*}
\rho= e^{7}\wedge(\omega_1+\hat\nu)+e^{4}\wedge \left(\omega_2+\frac{\tr(F)}{\tr(G)}\hat\nu+\lambda \nu\right)+e^{47}\wedge \alpha
\end{equation*}
with $\omega_1,\, \omega_2\in \L^{1,1,0,0}$, $\hat\nu\in \L^{0,2,0,0}$, $\lambda\in \bR$, $\alpha\in \L^{1,0,0,0}\op \L^{0,1,0,0}$. Lemma \ref{le:alginv1} and Lemma \ref{le:alginv2} imply that $\tilde\omega_1:=\omega_1+\hat\nu$ and
$\tilde\omega_2:=\omega_2+\frac{\tr(F)}{\tr(G)}\hat\nu+\lambda \nu$ span a two-dimensional subspace in which each non-zero element is of length two. Moreover,
\begin{equation*}
0=\Phi^{1,1,1,1}=\tr(F) e^4\wedge \rho^{1,1,0,1}-d(\rho^{1,1,1,0})-d(\rho^{1,1,0,1})
\end{equation*}
which shows that
\begin{equation*}
\begin{split}
d(e^1\wedge (\rho^{1,1,1,0}+\rho^{1,1,0,1}))=&(\nu+\tr(F) e^{14})\wedge (\rho^{1,1,1,0}+\rho^{1,1,0,1})-e^1\wedge d(\rho^{1,1,1,0}+\rho^{1,1,0,1})\\
=&\tr(F) e^{14}\wedge \rho^{1,1,0,1}-e^1\wedge d(\rho^{1,1,1,0})-e^1\wedge d(\rho^{1,1,0,1})\\
=&e^1\wedge \Phi^{1,1,1,1}= 0.
\end{split}
\end{equation*}
Since $\rho^{1,1,1,0}=e^4\wedge \omega_2$ and $\rho^{1,1,0,1}=e^7\wedge \omega_1$, we get $d(\omega_1\wedge e^{71}+\omega_2\wedge e^{41})=0$.

What is left to show is that $\hat\nu\neq 0$. Therefore, let $\tilde\Omega$ be the projection of $\Psi$ onto the subspace $\L^4 (\spa{e^1}\op V_2\op W_2)$ (along $\sum_{i=1}^2 \L^i (\spa{e^1}\op V_2\op W_2)\wedge \L^{2-i} \spa{e^4,e^7})$. By Lemma \ref{le:alginv1}, $l(\tilde{\Omega})\geq 1$, i.e. $\tilde{\Omega}\neq 0$. We may write $\tilde\Omega$ in terms of the components of $\rho$ and $\Omega$ as
\begin{equation*}
\tilde\Omega=e^1\wedge\rho^{2,1,0,0}+e^1\wedge \rho^{1,2,0,0}+\Omega^{2,2,0,0}=\Omega^{2,2,0,0}
\end{equation*}
and get $\Omega^{2,2,0,0}\neq 0$. Equation (\ref{eq:technicalities}) gives us
\begin{equation*}
0=(d\Omega+\nu\wedge \rho)^{2,2,0,1}=d\left(\Omega^{2,2,0,0}\right)^{2,2,0,1}+\nu\wedge \rho^{0,2,0,1}=-\tr(G) \Omega^{2,2,0,0}\wedge e^7+\nu\wedge \rho^{0,2,0,1}
\end{equation*}
and so $e^7\wedge\hat\nu =\rho^{0,2,0,1}\neq 0$, i.e. $\hat\nu\neq 0$.

``$\Leftarrow$'':\\
Assume that there exist two-forms $\omega_1$, $\omega_2\in  V_2\wedge  W_2$, $0\neq \hat{\nu}\in \L^2 W_2$ and $\lambda\in \bR$ fulfilling all the conditions. Then $\tilde{\omega}_1$ as in the statement fulfils $0\neq \tilde{\omega}_1^2\in \L^2 V_2\wedge \L^2 W_2$. Hence, there exists $0\neq\tilde{\lambda}\in \bR$ such that $\frac{\tilde{\lambda}}{2} \tilde{\omega}_1^2= -\frac{1}{\tr(G)}\nu \wedge \hat{\nu}$. Set now $\theta_1:=\frac{1}{\tilde\lambda} e^{71}$, $\theta_2:=\frac{1}{\tilde\lambda}e^{41}$, $\theta_3:=e^{74}\in \L^2 \spa{e^1,e^4,e^7}$. By assumption, $\tilde{\omega}_1$, $\tilde{\omega}_2$ as in the statement span a two-dimensional space in which each non-zero element has length two. Thus, we may apply Lemma \ref{le:subspaceslengthtwo} and Proposition \ref{pro:Hodgedualbytwoforms} to $V_4^*:=V_2\op W_2$, $V_3^*:=\spa{e^1,e^4,e^7}$ and get the existence of a two-form $\tilde\omega_3\in \L^2 V_4^*$ such that
\begin{equation*}
\Psi:=\sum_{i=1}^3 \tilde\omega_i\wedge \theta_i+\frac{1}{2}\tilde\omega_1^2
\end{equation*}
is the Hodge dual of a $\G_2$-structure. Using $d\nu=-\tr(F) \nu\wedge e^4$, $d\hat{\nu}=-\tr(G)\hat{\nu}\wedge e^7$, we compute
\begin{equation*}
\begin{split}
d\Psi&=\frac{1}{\tilde\lambda}d \left(\tilde{\omega}_1\wedge e^{71}+\tilde{\omega}_2\wedge e^{41}\right)+d\left(\tilde\omega_3\wedge e^{74}\right)-\frac{1}{\tilde\lambda\cdot \tr(G)}d(\nu\wedge\hat\nu)\\
&=\frac{1}{\tilde\lambda}d\left(\omega_1\wedge e^{71}+\omega_2\wedge e^{41}\right)+\frac{1}{\tilde\lambda}d\left(\hat\nu\wedge e^{71}\right)+\frac{1}{\tilde\lambda}d\left(\frac{\tr(F)}{\tr(G)}\hat\nu\wedge e^{41}+\lambda \nu\wedge e^{41}\right)\\
&\,\,\,\,\,\,+\frac{\tr(F)}{\tilde\lambda\cdot \tr(G)} \nu\wedge\hat\nu\wedge e^4+\frac{1}{\tilde\lambda} \nu\wedge\hat\nu\wedge e^7\\
&= 0-\frac{\tr(F)}{\tilde\lambda} \hat\nu \wedge e^{714}-\frac{1}{\tilde\lambda}\hat\nu\wedge e^7\wedge \nu-\frac{\tr(F)}{\tilde\lambda} \hat\nu\wedge e^{741}-\frac{\tr(F)}{\tilde\lambda\cdot\tr(G)} \hat\nu\wedge e^4\wedge \nu\\
&\,\,\,\,\,\,+\frac{\tr(F)}{\tilde\lambda\cdot \tr(G)} \nu\wedge\hat\nu\wedge e^4+\frac{1}{\tilde\lambda} \nu\wedge\hat\nu\wedge e^7\\
&= 0.
\end{split}
\end{equation*}

\end{proof}
\begin{remark}\label{re:technicalities}
The two-form $\omega_1\in  V_2\wedge  W_2$ in Proposition \ref{pro:technicalities} has to be of length two since $\tilde\omega_1=\omega_1+\hat\nu$ is of length two. By Lemma \ref{le:subspaceslengthtwo}, there exists a basis $e^2,e^3$ of $V_2$ and a basis $e^5,e^6$ of $W_2$ such that $\omega_1=e^{26}+e^{35}$. If $\det(G)\neq 0$, then the condition $d(\omega_1\wedge e^{71}+\omega_2\wedge e^{41})=0$ implies that $\omega_2= (F+\tr(F)\id)(e^2)\wedge G^{-1}(e^6)+(F+\tr(F)\id)(e^3)\wedge G^{-1}(e^5)$.
\end{remark}
Let us, nevertheless, start with $\det(G)=0$.
\begin{lemma}\label{le:det=0}
Let $\g$, $\g_4$, $\g_3$, $e^1,e^4\in  \g_4^*$, $e^7\in \g_3^*$, $V_2$, $F:V_2\rightarrow V_2$, $W_2$ and $G:W_2\rightarrow W_2$ as in Proposition \ref{pro:technicalities}. Assume further that $\det(G)=0$, i.e. $\g_3=\mathfrak{r}_2\op \bR$. Then $\g$ admits a cocalibrated $\G_2$-structure if and only if $\det(F+\tr(F)\id)=0$, i.e. $\g_4=A_{4,9}^{-\frac{1}{2}}$.
\end{lemma}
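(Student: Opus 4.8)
The plan is to run everything through Proposition~\ref{pro:technicalities}. Since $\g_3$ is non-unimodular and $\det(G)=0$, after rescaling $e^7$ we may assume $\tr(G)=1$, so $G$ has the distinct eigenvalues $1$ and $0$; fix an eigenbasis $f^5,f^6$ of $W_2$ with $G(f^5)=f^5$, $G(f^6)=0$ (this also identifies $\g_3=\mathfrak{r}_2\op\bR$, as claimed). Then $\L^2 W_2=\spa{f^{56}}$, so the two-form $\hat\nu$ in Proposition~\ref{pro:technicalities} is necessarily a nonzero multiple $c\,f^{56}$, $c\neq 0$. Writing $\hat F:=F+\tr(F)\id$, a direct computation with the structure equations (the one behind Remark~\ref{re:technicalities}, now carried out without assuming $G$ invertible) gives $d(\omega_1\wedge e^{71})=-\big((\hat F\otimes\id_{W_2})(\omega_1)\big)\wedge e^1\wedge e^4\wedge e^7$ and $d(\omega_2\wedge e^{41})=\big((\id_{V_2}\otimes G)(\omega_2)\big)\wedge e^1\wedge e^4\wedge e^7$; hence condition (i) of Proposition~\ref{pro:technicalities} is equivalent to $(\id_{V_2}\otimes G)(\omega_2)=(\hat F\otimes\id_{W_2})(\omega_1)$ inside $V_2\wedge W_2$, where $\otimes$ means the action on the indicated factor.

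For the ``only if'' direction, suppose $\g$ carries a cocalibrated $\G_2$-structure and, for contradiction, that $\det(\hat F)\neq 0$. Proposition~\ref{pro:technicalities} supplies $\omega_1,\omega_2\in V_2\wedge W_2$, a nonzero $\hat\nu=c\,f^{56}$ and some $\lambda\in\bR$. The operator $A:=\hat F\otimes\id_{W_2}$ is then invertible, while $B:=\id_{V_2}\otimes G$ has image $V_2\wedge\spa{f^5}$; so the relation $A(\omega_1)=B(\omega_2)$ forces $\omega_1\in A^{-1}(V_2\wedge\spa{f^5})=V_2\wedge\spa{f^5}$, i.e. $\omega_1=\gamma\wedge f^5$ is decomposable and $\omega_1^2=0$. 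But $\hat\nu^2=0$ and $\omega_1\wedge\hat\nu=0$ because $W_2$ is two-dimensional, whence $\tilde\omega_1^2=\omega_1^2=0$, contradicting that $\tilde\omega_1$ has length two. Therefore $\det(\hat F)=0$, and by the classification of four-dimensional Lie algebras with unimodular kernel $\h_3$ (Table~\ref{table:4d}, Lemma~\ref{le:4dbasis}) this is exactly the condition $\g_4=A_{4,9}^{-\frac{1}{2}}$.

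For the ``if'' direction, assume $\det(\hat F)=0$. Since $\tr(\hat F)=3\tr(F)\neq 0$, the map $\hat F$ has the distinct eigenvalues $0$ and $3\tr(F)$; choose an eigenbasis $e^2,e^3$ of $V_2$, so that $F(e^2)=-\tr(F)e^2$, $F(e^3)=2\tr(F)e^3$, and write $\nu=\mu\,e^{23}$ with $\mu\neq 0$. Plugging $\omega_1=\sum_{a\in\{2,3\},\,b\in\{5,6\}}p_{ab}\,e^a\wedge f^b$ and $\omega_2=\sum q_{ab}\,e^a\wedge f^b$ into the reformulated condition (i) forces $p_{36}=0$, $q_{25}=0$ and $q_{35}=3\tr(F)p_{35}$, while $p_{25},p_{26},p_{35},q_{26},q_{36},\lambda,c$ stay free. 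With $\tilde\omega_1,\tilde\omega_2$ as in Proposition~\ref{pro:technicalities}, one computes the symmetric matrix $H$ defined by $\tilde\omega_i\wedge\tilde\omega_j=H_{ij}\,e^{23}\wedge f^{56}$ to be $H_{11}=2p_{26}p_{35}$, $H_{12}=c\lambda\mu+3\tr(F)p_{26}p_{35}+p_{35}q_{26}$, $H_{22}=2\tr(F)(c\lambda\mu+3p_{35}q_{26})$. Since $c\lambda\mu$ and $p_{35}q_{26}$ may be prescribed arbitrarily and $p_{26}p_{35}$ may be any nonzero number, $H$ can be made definite: e.g. take $c=1$, $p_{26}=p_{35}=1$, $c\lambda\mu=-q_{26}$, and choose $q_{26}$ with $\tr(F)q_{26}$ large and positive; then $H_{11}=2>0$, $H_{12}=3\tr(F)$ and $\det H=8\tr(F)q_{26}-9\tr(F)^2>0$, so $H$ is positive definite. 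By Lemma~\ref{le:subspaceslengthtwo} this is condition (ii) of Proposition~\ref{pro:technicalities}, so $\g$ admits a cocalibrated $\G_2$-structure.

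The delicate points are the sign bookkeeping in the reformulation of condition (i) and, in the converse direction, checking that the constraints (i) imposes on $\omega_1,\omega_2$ still leave enough freedom to force $H$ definite; once the closed form of $H$ above is in hand the rest is elementary linear algebra.
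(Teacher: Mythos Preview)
Your argument is correct and follows the same strategy as the paper: both directions go through Proposition~\ref{pro:technicalities}, the obstruction comes from the rank drop forced by $\det(G)=0$ in condition~(i) (the paper phrases this as a length constraint on $d(\omega_1\wedge e^{71})$, you phrase it dually by inverting $\hat F\otimes\id$ and reading off that $\omega_1$ is decomposable), and the construction is an explicit choice of parameters making the Gram matrix definite. One small slip: your displayed formula for $H_{12}$ omits the term $-p_{25}q_{36}$ coming from $\omega_1\wedge\omega_2$, but this is harmless since in your concrete choice you implicitly take $p_{25}=q_{36}=0$ (you never assign them), and with that convention all stated values of $H_{11},H_{12},H_{22},\det H$ are correct.
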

\begin{proof}
``$\Rightarrow$'':\\
Assume that $\g$ admits a cocalibrated $\G_2$-structure. By Proposition \ref{pro:technicalities} and Remark \ref{re:technicalities}, there exists a basis $e^2,e^3$ of $V_2$ and a basis $e^5,e^6$ of $W_2$ such that $\omega_1:=e^{26}+e^{35}$ fulfils $d(\omega_1\wedge e^{71})\in d( V_2\wedge  W_2\wedge e^{41})= V_2\wedge G(W_2)\wedge e^{741}$. Each element in $ V_2\wedge G(W_2)\wedge e^{741}$ is of length at most one due to $\det(G)=0$. But
\begin{equation*}
d(\omega_1\wedge e^{71})=((F+\tr(F)\id)(e^2)\wedge e^6+(F+\tr(F)\id)(e^3)\wedge e^5)\wedge e^{741}
\end{equation*}
is of length less than two if and only if $\det(F+\tr(F)\id)= 0$. Thus, $\det(F+\tr(F)\id)= 0$.\\
``$\Leftarrow$'':\\
We have $\det(F+\tr(F)\id)=0=\det(G)$ and $\tr(F+\tr(F)\id)=3\tr(F)\neq 0,\tr(G)\neq 0$. Since both $F+\tr(F)\id$ and $G$ are linear endomorphisms in two dimensions, this implies that they diagonalizable over the reals with one zero eigenvalue and one non-zero eigenvalue. We may, after rescaling $e^4$ and $e^7$, assume that the non-zero eigenvalue is equal to one in both cases and so $\tr(F)=\frac{1}{3}$ and $\tr(G)=1$. Since $d(e^1\wedge \alpha)=-e^1\wedge(F+\tr(F)\id)(\alpha)\wedge e^4$ for all $\alpha\in V_2$, there exists a basis $e^2,e^3$ of $V_2$ such that $de^{12}=0$ and $de^{13}=-e^{134}$. Moreover, we may choose a basis $e^5,e^6$ of $W_2$ with $de^5=0$ and $de^6=e^{67}$. Then the following two-forms fulfil all the conditions in Proposition \ref{pro:technicalities}:
\begin{equation*}
\omega_1:=e^{25}-e^{36}+e^{26}, \quad \omega_2:=e^{25}-e^{36}-2 e^{35},\quad \tilde{\omega}_1:=e^{56}+\omega_1,\quad \tilde{\omega}_2:=\frac{1}{3} e^{56}+\omega_2.
\end{equation*}
\end{proof}
If $\det(G)\neq 0$ and $F$ and $G$ are both not multiples of the identity, we get:
\begin{lemma}\label{le:FGnotidentity}
Let $\g$, $\g_4$, $\g_3$, $e^1,e^4\in  \g_4^*$, $e^7\in \g_3^*$, $V_2$, $F:V_2\rightarrow V_2$, $W_2$ and $G:W_2\rightarrow W_2$ as in Proposition \ref{pro:technicalities}. Assume further that $F$ and $G$ are both not multiples of the identity, i.e. $\g_4\neq A_{4,9}^{1}$ and $\g_3\neq \mathfrak{r}_{3,1}$. Then $\g$ admits a cocalibrated $\G_2$-structure.
\end{lemma}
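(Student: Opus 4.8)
The plan is to verify the existence criterion in Proposition~\ref{pro:technicalities}: I must produce two linearly independent $\omega_1,\omega_2\in V_2\wedge W_2$, a nonzero $\hat\nu\in\L^2 W_2$ and a $\lambda\in\bR$ satisfying conditions (i) and (ii) there. As explained just before the statement, we are in the case $\det(G)\neq0$ (when $\det(G)=0$, i.e.\ $\g_3=\mathfrak{r}_2\oplus\bR$, the claim is Lemma~\ref{le:det=0}). By Remark~\ref{re:technicalities}, $\omega_1$ must have length two and, once $\omega_1$ is chosen of length two, condition (i) forces $\omega_2$. The point I would exploit is that we may take \emph{any} length-two $\omega_1$: for an arbitrary basis $e^2,e^3$ of $V_2$ and an arbitrary basis $e^5,e^6$ of $W_2$, the two-form $\omega_1:=e^{26}+e^{35}$ has length two (since $e^2,e^3,e^5,e^6$ are linearly independent), and then (i) yields $\omega_2=(F+\tr(F)\id)(e^2)\wedge G^{-1}(e^6)+(F+\tr(F)\id)(e^3)\wedge G^{-1}(e^5)$. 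Write $P:=F+\tr(F)\id$; since $F$ is not a scalar matrix, neither is $P$, and $G^{-1}$ is not scalar either.

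Next I would set $\hat\nu:=h\,e^{56}$ with $h\neq0$, write $\nu=n\,e^{23}$ with $n\neq0$, fix the volume form $\tau:=e^{2356}$, and compute the symmetric $2\times2$ matrix $H=(h_{ij})$ of Lemma~\ref{le:subspaceslengthtwo} associated with $\tilde\omega_1=\hat\nu+\omega_1$ and $\tilde\omega_2=\tfrac{\tr F}{\tr G}\hat\nu+\lambda\nu+\omega_2$. A routine wedge computation --- using that $\L^2 V_2,\L^2 W_2$ are one-dimensional, that decomposable two-forms square to zero, and that $P(e^2)\wedge P(e^3)=\det(P)\,e^{23}$ and similarly for $G^{-1}$ --- gives
\[
h_{11}=2,\qquad h_{22}=\frac{2\det(P)}{\det(G)}+2\,\frac{\tr(F)}{\tr(G)}\,hn\lambda,\qquad h_{12}=hn\lambda+K,
\]
where $K$ is a scalar depending only on the chosen bases. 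By Lemma~\ref{le:subspaceslengthtwo}, condition (ii) is equivalent to $H$ being definite, hence (as $h_{11}>0$) to $\det H>0$. As a function of $u:=hn\lambda$, the quantity $\det H=h_{11}h_{22}-h_{12}^2$ is a downward-opening quadratic, so it can be made positive by a suitable $u$ (hence by suitable $h,\lambda$) exactly when its maximum is positive, which one computes to be equivalent to
\[
(\tr F)^2-\tr(F)\tr(G)\,K+\frac{(\tr G)^2\det(P)}{\det(G)}>0 .
\]

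So it remains to choose the bases so that $K$ meets this inequality. Here the key observation I would establish is that $K=\tr\!\big(P_{(e)}\,J\,(G^{-1})_{(e)}^{\,T}\,J\big)$ with $J=\mm{0}{1}{1}{0}$ and $P_{(e)},(G^{-1})_{(e)}$ the matrices of $P,G^{-1}$ in the chosen bases; since these bases may be chosen independently, and since $B\mapsto JB^{T}J$ carries the similarity class of $G^{-1}$ onto that of $(G^{-1})^{T}$, which coincides with that of $G^{-1}$, the admissible values of $K$ form exactly the set $\{\tr(AB)\mid A\text{ similar to }P,\ B\text{ similar to }G^{-1}\}$. Because $P$ and $G^{-1}$ are non-scalar real $2\times2$ matrices, this set is unbounded above and below: splitting off the (scalar) trace parts, one reduces to the elementary fact that a nonzero linear functional restricted to the $\GL_2(\bR)$-conjugacy class of a nonzero traceless $2\times2$ matrix --- a punctured cone, a one-sheeted hyperboloid, or a two-sheeted hyperboloid --- is unbounded in both directions. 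Choosing bases with $-\tr(F)\tr(G)\,K$ large enough then makes the displayed inequality hold, and picking $h,\lambda$ accordingly (keeping $h\neq0$) supplies the data required by Proposition~\ref{pro:technicalities}, which delivers the cocalibrated $\G_2$-structure.

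The real work, and the only genuine obstacle, is the bookkeeping: checking that (i) forces precisely the stated $\omega_2$, carrying out the wedge computation that puts $H$ in the displayed form, and pinning down the range of $K$. A more pedestrian alternative --- in the spirit of the proof of Theorem~\ref{th:5d+r2} --- would be to run through the finitely many real Jordan types of $F$ and of $G$ and exhibit explicit $\omega_1,\omega_2,\hat\nu,\lambda$ in each; the argument sketched above treats all of them uniformly.
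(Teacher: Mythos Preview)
Your argument is correct and runs through Proposition~\ref{pro:technicalities} just as the paper's does, but the execution is genuinely different. The paper normalizes $\tr(F)=\tr(G)=1$, then writes down an explicit one-parameter family of companion-form bases (parameter $a>0$) for $V_2$ and $W_2$, sets $\hat\nu=e^{56}$ and $\lambda\nu=-a\,e^{23}$, and computes directly that $C-B^2=a+\tfrac{\det H}{\det G}-\tfrac{\det(H)^2}{4a^2\det(G)^2}$, which is positive for $a$ large; no conjugacy-class analysis enters. Your route instead keeps the bases generic, organizes the Gram determinant as a downward quadratic in $u=hn\lambda$, reduces solvability to a linear inequality in the cross term $K$, and then argues that $K=\tr(AB)$ with $A\sim P$, $B\sim G^{-1}$ sweeps out an unbounded set because $P$ and $G^{-1}$ are non-scalar. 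The paper's approach is shorter and fully explicit; yours is more structural and makes transparent \emph{why} non-scalarity of $F$ and $G$ is exactly what is needed. Your closing remark about a Jordan-type case split is a third option, but note it is not what the paper does either: the paper's single family handles all non-scalar $F,G$ uniformly.

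Two small points. First, as you note, the case $\det(G)=0$ must be excluded (it is handled by Lemma~\ref{le:det=0}, and the statement would actually be false there for most $\g_4$); the paper makes this restriction only in the sentence preceding the lemma, so your reading is the intended one. Second, the ``elementary fact'' you invoke about unboundedness of a nonzero linear functional on a nonzero traceless $\GL_2(\bR)$-orbit is true but deserves one line of justification: each such orbit is invariant under $B\mapsto -B$ and under conjugation by $\diag(t,1)$, and the latter scales the off-diagonal entries so as to push any nonconstant linear functional to $\pm\infty$.
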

\begin{proof}
Set $H:=-(F+\tr(F)\id)$. Then also $H:V_2\rightarrow V_2$ is not a multiple of the identity, not trace-free and $d(e^1\wedge\alpha)=e^1\wedge H(\alpha)\wedge e^4$ for all $\alpha\in V_2$. By rescaling $e^4$ appropriately, we may assume that $\tr(H)=-3$, i.e. $\tr(F)=1$. Hence, we may choose a basis $e^2,e^3$ of $V_2$ such that the transformation matrix of $H$ with respect to this basis is given by
\begin{equation*}
\begin{pmatrix}
0 & \frac{\det(H)}{\det(G)} \\
-\det(G) & -3
\end{pmatrix}.
\end{equation*}
Moreover, by rescaling $e^7$ appropriately, we may assume that $\tr(G)=1$. Hence, for all $a\in \bR\backslash\{0\}$, we may choose a basis $e^5,e^6$ of $W_2$ such that the transformation matrix of $G$ with respect to this basis is given by
\begin{equation*}
\begin{pmatrix}
0 & -\frac{\det(G)}{a}\\
a & 1
\end{pmatrix}.
\end{equation*}
Set
\begin{equation*}
\omega_1:=e^{25}+e^{36},\; \omega_2:=-\frac{\det(H)}{\det(G) a}e^{25}+\frac{3+a}{a} e^{35}-a\, e^{36},\; \tilde{\omega}_1:=e^{56}+\omega_1,\; \tilde{\omega}_2:=e^{56}-a\, e^{23}+\omega_2.
\end{equation*}
A short computation shows $d(\omega_1\wedge e^{71}+\omega_2\wedge e^{41})=0$. Moreover, $\tilde{\omega}_1^2=2 e^{2536}\neq 0$ and $\tilde{\omega}_1\wedge \tilde{\omega}_2=B\tilde{\omega}_1^2$, $\tilde{\omega}_2^2=C \tilde{\omega}_1^2$ with $B=-\frac{\det(H)}{2 a \det(G)}$ and $C=a+\frac{\det(H)}{\det(G)}$. Hence,
\begin{equation*}
C-B^2=a+\frac{\det(H)}{\det(G)} - \frac{\det(H)^2}{4 a^2\det(G)^2}>0
\end{equation*}
for $a>0$ large enough and so $\tilde{\omega}_1$, $\tilde{\omega}_2$ span a two-dimensional space in which each non-zero element has length two by Lemma \ref{le:subspaceslengthtwo}. Thus, $\g$ admits a cocalibrated $\G_2$-structure by Proposition \ref{pro:technicalities}
\end{proof}
Therefore, it remains to consider the cases when at least one of the maps $F$ and $G$ is (a multiple of) the identity:
\begin{lemma}\label{le:ForGidentity}
Let $\g$, $\g_4$, $\g_3$, $e^1,e^4\in  \g_4^*$, $e^7\in \g_3^*$, $V_2$, $F:V_2\rightarrow V_2$, $W_2$ and $G:W_2\rightarrow W_2$ as in Proposition \ref{pro:technicalities}.
\begin{enumerate}
 \item
If $F$ is a multiple of the identity, i.e. $\g_4=A_{4,9}^1$, then $\g$ admits a cocalibrated $\G_2$-structure if and only if $-\frac{3}{4}\tr(G)^2>\det(G)$ or $\det(G)>0$.
\item
If $G$ is a multiple of the identity, i.e. $\g_3=\mathfrak{r}_{3,1}$, then $\g$ admits a cocalibrated $\G_2$-structure if and only if $\det(F)>-\frac{3}{4}\tr(F)^2$.
\end{enumerate}
\end{lemma}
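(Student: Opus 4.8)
Throughout I would work from Proposition \ref{pro:technicalities}, which reduces the existence of a cocalibrated $\G_2$-structure on $\g$ to finding $\omega_1,\omega_2\in V_2\wedge W_2$, a nonzero $\hat\nu\in\Lambda^2 W_2$ and $\lambda\in\bR$ satisfying condition (i) and such that $\tilde\omega_1:=\hat\nu+\omega_1$, $\tilde\omega_2:=\tfrac{\tr(F)}{\tr(G)}\hat\nu+\lambda\nu+\omega_2$ span a plane of length-two forms. By Remark \ref{re:technicalities} the form $\omega_1$ must have length two, so, choosing suitable bases $e^2,e^3$ of $V_2$ and $e^5,e^6$ of $W_2$, one may take $\omega_1=e^{26}+e^{35}$; moreover, whenever $\det(G)\neq0$, condition (i) forces $\omega_2=(F+\tr(F)\id)(e^2)\wedge G^{-1}(e^6)+(F+\tr(F)\id)(e^3)\wedge G^{-1}(e^5)$. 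Since $\Lambda^2 W_2$ and $\Lambda^2 V_2$ are one-dimensional, $\hat\nu=t\,e^{56}$ with $t\neq0$ free and $\nu=s\,e^{23}$ with $s\neq0$ fixed. A short wedge computation then gives $\tilde\omega_1^2=2\tau$ for $\tau:=e^{2356}\neq0$ (so $\tilde\omega_1$ automatically has length two), and writes $\tilde\omega_1\wedge\tilde\omega_2=2B\tau$, $\tilde\omega_2^2=2C\tau$ with $B,C$ explicit. By Lemma \ref{le:subspaceslengthtwo}, $\spa{\tilde\omega_1,\tilde\omega_2}$ is a plane of length-two forms iff $C-B^2>0$. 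Crucially, $B$ and $C$ depend on the free data only through $x:=\lambda st$, which ranges over all of $\bR$, and $C-B^2$ is a downward-opening quadratic in $x$; so everything comes down to computing $\max_{x\in\bR}(C-B^2)$ and deciding its sign.

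\textbf{Part (a).} Here $F$ is a multiple of $\id$; after rescaling $e^4$ we may assume $F+\tr(F)\id=\id$. If $\det(G)=0$, i.e. $\g_3=\mathfrak{r}_2\op\bR$, then Lemma \ref{le:det=0} applies and states that $\g$ admits a cocalibrated $\G_2$-structure iff $\det(F+\tr(F)\id)=0$; but $F+\tr(F)\id=\id$ is invertible, so there is none, which matches the claimed criterion since then $-\tfrac34\tr(G)^2<0=\det(G)$ and $\det(G)\not>0$. If $\det(G)\neq0$, the computation of the previous paragraph yields $B=\tfrac12\bigl(x+\tfrac{\tr(G)}{\det(G)}\bigr)$ and $C=\tfrac1{\det(G)}+\tfrac{\tr(F)}{\tr(G)}x$ (for the above normalization), whence $\max_x(C-B^2)$ equals a positive multiple of $\tfrac1{\tr(G)^2}+\tfrac{c}{\det(G)}$ with $c>0$; this is positive exactly when $4>-\tfrac{3\tr(G)^2}{\det(G)}$, that is, when $\det(G)>0$ or $\det(G)<-\tfrac34\tr(G)^2$ — precisely condition (a). No separate treatment of complex eigenvalues of $G$ is needed, because the formulas were derived without assuming $G$ real-diagonalizable, and such $G$ have $\det(G)>0$ anyway.

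\textbf{Part (b).} Now $\g_3=\mathfrak{r}_{3,1}$, so $G$ is a nonzero multiple of $\id$; in particular $\det(G)>0$, so Remark \ref{re:technicalities} applies with no degenerate subcase, and $G^{-1}$ is again a multiple of $\id$. Normalising $G=\id$ and writing $K:=F+\tr(F)\id$, the same bookkeeping gives $B=\tfrac12(x+\tr(K))$ and $C=\tfrac12\tr(F)\,x+\det(K)$, so $\max_{x\in\bR}(C-B^2)=\det(K)-\tfrac54\tr(F)^2$ (attained at $x=-2\tr(F)$). Expressing this through the eigenvalues $\mu_1,\mu_2$ of $F$ and using $\tr(K)=3\tr(F)$ one checks the identity $\det(K)-\tfrac54\tr(F)^2=\tfrac14(3\mu_1+\mu_2)(\mu_1+3\mu_2)=\det(F)+\tfrac34\tr(F)^2$, which is positive iff $\det(F)>-\tfrac34\tr(F)^2$ — precisely condition (b). (Again complex eigenvalues of $F$ are automatically covered, since then $\det(F)>0>-\tfrac34\tr(F)^2$.)

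\textbf{Main obstacle.} The wedge-product identities and the single-variable optimisation are routine but sign-sensitive, so the bookkeeping must be done carefully; I expect the genuinely delicate step to be the justification that one has legitimately reduced to a one-variable problem. Concretely: one must verify that the parameter $x=\lambda st$ is truly free over $\bR$ ($\hat\nu$ is free and nonzero, $\lambda$ is free, $s\neq0$), and that the residual freedom in the choice of bases realising $\omega_1=e^{26}+e^{35}$ does not enter $B$ and $C$ — only the invariants $\tr(F),\det(F),\tr(G),\det(G)$ do, which is why the final criteria are stated in terms of those invariants and are consistent with the scale-invariance of $\det/(\tr)^2$ noted in Lemmas \ref{le:3d} and \ref{le:4dbasis}.
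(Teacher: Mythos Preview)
Your approach is essentially the same as the paper's: both reduce via Proposition~\ref{pro:technicalities} and Remark~\ref{re:technicalities} to the definiteness criterion of Lemma~\ref{le:subspaceslengthtwo}, observe that the wedge products $\tilde\omega_1^2,\tilde\omega_1\wedge\tilde\omega_2,\tilde\omega_2^2$ depend on the free data only through the single real variable $x=\alpha\lambda$ (your $\lambda st$), and then analyse the resulting downward-opening quadratic in $x$ via its discriminant/maximum. The only cosmetic difference is your normalisation $F+\tr(F)\id=\id$ in part~(a) versus the paper's $F=\id$ (so $\tr(F)=2$, $F+\tr(F)\id=3\id$), which changes the intermediate constants but of course not the final scale-invariant criterion; your explicit invocation of Lemma~\ref{le:det=0} to dispose of the $\det(G)=0$ subcase in~(a) is a small improvement in clarity over the paper, which tacitly assumes $\det(G)\neq0$ when writing $G^{-1}$.
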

\begin{remark}
Note that a real two-by-two matrix with negative determinant is always diagonalizable over the reals. The determinant of $G$ is negative if the condition in Lemma \ref{le:ForGidentity} (a) is not fulfilled and the determinant of $F$ is negative if the condition in Lemma \ref{le:ForGidentity} (b) is not fulfilled. Hence, it is easily checked that the condition on $\g_3$ in Lemma \ref{le:ForGidentity} (a) is not fulfilled exactly when $\g_3\in \left\{\mathfrak{r}_{3,\mu}\left|\mu\in \left[-1/3,0\right)\right.\right\}$ and that the condition on $\g_4$ in Lemma \ref{le:ForGidentity} (b) is not fulfilled exactly when $\g_4\in \left\{A_{4,9}^{\alpha}\left|\alpha\in \left(-1,-1/3\right]\right.\right\}$. Hence, proving Lemma \ref{le:ForGidentity} finishes the proof of Theorem \ref{th:main}.
\end{remark}
\begin{proof}
\begin{enumerate}
\item
By rescaling $e^4$ we may assume that $\tr(F)=2$, i.e. $F=\id$. Hence, Proposition \ref{pro:technicalities} and Remark \ref{re:technicalities} tell us that $\g$ admits a cocalibrated $\G_2$-structure if and only if there exists a basis $e^2,e^3$ of $V_2$, a basis $e^5,e^6$ of $W_2$, $\lambda,\alpha\in \bR$, $\alpha\neq 0$ such that each non-zero linear combination of
\begin{equation*}
\tilde{\omega}_{1,\alpha,\lambda}:=\alpha e^{56}+e^{26}+e^{35},\, \, \tilde{\omega}_{2,\alpha,\lambda}:= \frac{2}{\tr(G)} \alpha e^{56}+\lambda e^{23}+3 e^2\wedge G^{-1}(e^6)+3 e^3\wedge G^{-1}(e^5)
\end{equation*}
is of length two. A short computation shows
\begin{equation*}
\begin{split}
\tilde{\omega}_{1,\alpha,\lambda}^2&=2e^{2356},\quad  \tilde{\omega}_{1,\alpha,\lambda}\wedge \tilde{\omega}_{2,\alpha,\lambda}=\left(\alpha \lambda+\frac{3\tr(G)}{\det(G)}\right)e^{2356},\\
\tilde{\omega}_{2,\alpha,\lambda}^2&=\left(4\frac{\alpha \lambda}{\tr(G)}+18\frac{1}{\det(G)}\right)e^{2356}
\end{split}
\end{equation*}
since for an invertible two-by-two matrix $\tr\left(G^{-1}\right)=\frac{\tr(G)}{\det(G)}$. Set $X:=\alpha\lambda$. Then Lemma \ref{le:subspaceslengthtwo} tells us that each non-zero linear combination of $\tilde\omega_{1,\alpha,\lambda}$ and $\tilde\omega_{2,\alpha,\lambda}$ is of length two if and only if the quadratic polynomial
\begin{equation*}
\begin{split}
&8\frac{X}{\tr(G)}+36\frac{1}{\det(G)}-\left(X+\frac{3\tr(G)}{\det(G)}\right)^2\\
&=-X^2+\left(\frac{8 }{\tr(G)}-6 \frac{\tr(G)}{\det(G)}\right)X+ 36\frac{1}{\det(G)}-9 \frac{\tr(G)^2}{\det(G)^2}
\end{split}
\end{equation*}
in $X$ with leading negative coefficient is positive for some $X\in \bR$. Note that this expression does not depend on the basis we have chosen. Hence, $\g$ admits a cocalibrated $\G_2$-structure if and only if this quadratic polynomial is positive for some $X\in \bR$ and this is true if and only if its discriminant is positive. The discriminant is given by
\begin{equation*}
\left(6 \frac{\tr(G)}{\det(G)}-8 \frac{1}{\tr(G)}\right)^2-4\cdot\left(9 \frac{\tr(G)^2}{\det(G)^2}-36\frac{1}{\det(G)}\right)= \frac{16( 3\tr(G)^2+4\det(G) )}{\det(G) \tr(G)^2},
\end{equation*}
and it is positive if and only if
\begin{equation*}
-\frac{3}{4}\tr(G)^2>\det(G) \quad \textrm{ or } \quad \det(G)>0.
\end{equation*}
\item
By rescaling $e^7$ we may assume $\tr(G)=2$, i.e. $G=\id$. Then we see similarly as in the proof of part (a) that $\g$ admits a cocalibrated $\G_2$-structure if and only if there exists a basis $e^2,e^3$ of $V_2$, a basis $e^5,e^6$ of $W_2$, $\lambda,\alpha\in \bR$, $\alpha\neq 0$ such that each non-zero linear combination of
\begin{equation*}
\begin{split}
\tilde{\omega}_{1,\alpha,\lambda}&:=\alpha e^{56}+e^{26}+e^{35},\\
\tilde{\omega}_{2,\alpha,\lambda}&:= \frac{\tr(F)}{2} \alpha e^{56}+\lambda e^{23}+ (F+\tr(F)\id)(e^2)\wedge e^6+(F+\tr(F)\id)(e^3)\wedge e^5
\end{split}
\end{equation*}
is of length two. If we set $X:=\alpha \lambda$ as before, we find, analogously to the proof of (a), that the existence of a cocalibrated $\G_2$-structure on $\g$ is equivalent to the existence of $X\in  \bR$ such that $-X^2-4 \tr(F) X-\tr(F)^2+4\det(F)$ is positive. Note therefore that for a two-by-two matrix $A\in \bR^{2\times 2}$ we generally have $\det(A+\tr(A) I_2)=\det(A)+2\tr(A)^2$. Now $-X^2-4 \tr(F) X-\tr(F)^2+4\det(F)$ is positive for some $X\in \bR$ exactly when the discriminant of this quadratic polynomial in $X$, which is given by $12 \tr(F)^2+16 \det(F)$,
is positive. And this is the case if and only if
\begin{equation*}
\det(F)>-\frac{3}{4}\tr(F)^2.
\end{equation*}
\end{enumerate}
\end{proof}
\newpage
\section*{Acknowledgments}
\pagestyle{myheadings}
\markright{APPENDIX \hfill}
The author thanks the University of Hamburg for financial support and Vicente Cort\'es for many helpful discussions. This work was supported by the
German Research Foundation (DFG) within the Collaborative Research Center 676 ``Particles, Strings and the
Early Universe''.
\section*{Appendix}
Table \ref{table:3d} contains all three-dimensional Lie algebras. The list is further subdivided into the unimodular and the non-unimodular three-dimensional Lie algebras. The names for the non-unimodular Lie algebras in the first column have been adopted from \cite{GOV}. In the second column the Lie bracket is encoded dually. Thereby, $e^5,\, e^6, \,e^7$ is a basis of $\g^*$ and we write down the vector $(de^5,de^6,de^7)$ and use the abbreviation $e^{ij}:=e^i\wedge e^j$. Note that, instead of the more natural denotation of the basis of $\g^*$ by $e^1,\,e^2,\,e^3$, we denote it by $e^5,\, e^6,\,e^7$ since these one-forms are always the last three basis elements in the dual basis of the seven-dimensional Lie algebras we consider. In the last column the vector $(h^1(\g),h^2(\g),h^3(\g))$ of the dimensions of the corresponding Lie algebra cohomology groups is given. We omitted $h^0(\g)$ since it is always equal one.

Table \ref{table:4d} contains all four-dimensional Lie algebras and it is, as before, further subdived into the unimodular and the non-unimodular ones. The names for the Lie algebras in the first column have been adopted from \cite{PSWZ}. In the second column the Lie bracket is encoded dually for a basis $e^1,\,e^2,\, e^3,\, e^4$ of $\g^*$ as in Table \ref{table:3d}. The next column contains the vector $(h^1(\g),h^2(\g),h^3(\g),h^4(\g))$ of the dimensions of the corresponding Lie algebra cohomology groups, where we again omit $h^0(\g)=1$.
The column labelled ``$\uf$'' contains all isomorphism classes of unimodular codimension one ideals in $\g$. Note that for $\bR^4$ there are obviously different codimension Abelian ideals and also for $\h_3\oplus \bR$ there are different codimension one Abelian ideals. The next column, labelled $[\g,\g]$ contains the commutator ideal of $\g$. Finally, in the last column the number $h^1(\g)+h^1(\uf)-h^2(\g)$ is computed. If there is more than one isomorphism class of codimension one unimodular ideals $\uf$, then the different numbers are written next to each other, ordered according to the order in the column ``$\uf$''. 

Table \ref{table:examples} contains (the dual bases of) adapted bases for cocalibrated $\G_2$-structures on three different seven-dimensional Lie algebras $\g$ which are Lie algebra direct sums of a four and a three-dimensional Lie algebra. These three cases are exceptional in the sense that they do not fulfill any of the different conditions we obtained in this article which ensure the existence of a cocalibrated $\G_2$-structure.
\begin{small}
\renewcommand{\arraystretch}{1.5} 
\setlength{\tabcolsep}{0.1cm}
\begin{longtable}[ht]{llc}
  \caption{Three-dimensional Lie algebras} \\ \hline
  $\g$ & Lie bracket &  $\mathrm{h}^*(\g)$ \\
  \hline
  \endhead
  \label{table:3d}
  & unimodular & \\ \hline
  $\so(3)$ & $(e^{67},-e^{57},e^{56})$ & $(0,0,1)$  \\
  $\so(2,1)$ & $(e^{67},e^{57},e^{56})$ & $(0,0,1)$  \\
  $e(2)$ & $(e^{67},-e^{57},0)$ & $(1,1,1)$  \\
  $e(1,1)$ & $(e^{67},e^{57},0)$ & $(1,1,1)$ \\
  $\h_3$ & $(e^{67},0,0)$ & $(2,2,1)$  \\
  $\bR^3$ & $(0,0,0)$ & $(3,3,1)$ \\
  \hline  & non-unimodular & \\ \hline
  $\mathfrak{r}_2\op \bR$ & $(e^{57},0,0)$ & $(2,1,0)$ \\
  $\mathfrak{r}_3$ & $(e^{57}+e^{67},e^{67},0)$  & $(1,0,0)$ \\
  $\mathfrak{r}_{3,\mu}$ & $(e^{57},\mu e^{67},0)$, $-1<\mu\leq 1$, $\mu\neq 0$ & $(1,0,0)$ \\
  $\mathfrak{r}'_{3,\mu}$ &  $(\mu e^{57}+e^{67}, \mu e^{67}-e^{57},0)$, $\mu>0$ & $(1,0,0)$
\end{longtable}
\begin{longtable}[ht]{L{1.7cm}L{5.5cm}cccc}
  \caption{Four-dimensional Lie algebras} \\ \hline
  $\g$ & Lie bracket  & $\mathrm{h}^*(\g)$ &  $\uf$ &  $[\g,\g]$ & $h^1(\g)+h^1(\uf)-h^2(\g)$ \\
  \hline
  \endhead
   \label{table:4d}
  & \multicolumn{2}{c}{unimodular} & & & \\ \hline
  $\so(3)\op \bR$ & $(e^{23},-e^{13},e^{12},0)$ & $(1,0,1,1)$ & $\so(3)$ & $\so(3)$ & $1$  \\
  $\so(2,1)\op \bR$ & $(e^{23},e^{13},e^{12},0)$ & $(1,0,1,1)$ & $\so(2,1)$ &  $\so(2,1)$ & $1$  \\
  $e(2)\op \bR$ & $(e^{23},-e^{13},0,0)$ & $(2,2,2,1)$ & $\bR^3,\,e(2)$ & $\bR^2$ & $3,\,1$  \\
  $e(1,1)\op \bR$ & $(e^{23},e^{13},0,0)$ & $(2,2,2,1)$ & $\bR^3,\,e(1,1)$ & $\bR^2$ & $3,\, 1$ \\
  $\h_3\op \bR$ & $(e^{23},0,0,0)$ & $(3,4,3,1)$ & $\bR^3$, $\h_3$ & $\bR$ & $2,\, 1$ \\
  $\bR^4$ & $(0,0,0,0)$ & $(4,6,4,1)$ & $\bR^3$ & $\{0\}$ & $1$ \\
  $A_{4,1}$ & $(e^{24},e^{34},0,0)$ & $(2,2,2,1)$ & $\bR^3,\,\h_3$ & $\bR^2$ & $3,\,2$ \\
  $A_{4,2}^{-2}$ & $(-2 e^{14},e^{24}+e^{34},e^{34},0)$ & $(1,0,1,1)$ & $\bR^3$ & $\bR^3$ &  $4$ \\
  $A_{4,5}^{\alpha,-(\alpha+1)}$ & $( e^{14},\alpha e^{24},-(\alpha+1) e^{34},0)$, $-1<\alpha\leq -1/2$  & $(1,0,1,1)$ & $\bR^3$ & $\bR^3$ & $4$ \\
  $A_{4,6}^{\alpha,-\alpha/2}$ & $\left( \alpha e^{14},-\frac{\alpha}{2} e^{24}+e^{34},\right.$ $\left.-\frac{\alpha}{2} e^{34}-e^{24},0\right)$,  $\alpha>0$ & $(1,0,1,1)$ & $\bR^3$ & $\bR^3$ & $4$ \\
  $A_{4,8}$ & $(e^{23},e^{24},-e^{34},0)$ & $(1,0,1,1)$ & $\h_3$ & $\h_3$ & $3$ \\
  $A_{4,10}$ & $(e^{23},e^{34},-e^{24},0)$ & $(1,0,1,1)$ & $\h_3$ & $\h_3$ & $3$ \\
 \hline & \multicolumn{2}{c}{non-unimodular} & & \\ \hline
  $\mathfrak{r}_2\op \bR^2$ & $(e^{14},0,0,0)$  &  $(3,3,1,0)$ & $\bR^3$ & $\bR$ & $3$\\
  $\mathfrak{r}_3\op \bR$ & $(e^{14}+e^{24},e^{24},0,0)$  &  $(2,1,0,0)$ & $\bR^3$ & $\bR^2$ & $4$ \\
  $\mathfrak{r}_{3,\mu}\op \bR$ & $(e^{14},\mu e^{24},0,0)$, $-1<\mu\leq 1$, $\mu\neq 0$ &  $(2,1,0,0)$ & $\bR^3$ & $\bR^2$ & $4$ \\
  $\mathfrak{r}'_{3,\mu}\op \bR$ & $(\mu e^{14}+e^{24},-e^{14}+\mu e^{24},0,0)$, $\mu>0$ &  $(2,1,0,0)$ & $\bR^3$ & $\bR^2$ & $4$ \\
   $A_{4,2}^{\alpha}$ & $(\alpha e^{14},e^{24}+e^{34},e^{34},0)$ & & & &  \\
    & $\alpha\neq 0,-1,-2$ & $(1,0,0,0)$ & $\bR^3$ & $\bR^3$ & $4$ \\
   & $\alpha=-1$ & $(1,1,1,0)$ & $\bR^3$ & $\bR^3$ & $3$ \\
   $A_{4,3}$ & $( e^{14},e^{34},0,0)$ & $(2,2,1,0)$ & $\bR^3$ & $\bR^2$ & $3$ \\
   $A_{4,4}$ & $(e^{14}+e^{24},e^{24}+e^{34},e^{34},0)$ & $(1,0,0,0)$ & $\bR^3$ & $\bR^3$ & $4$ \\
   $A_{4,5}^{\alpha,\beta}$ & $( e^{14},\alpha e^{24},\beta e^{34},0)$ &  &  &  & \\
    & $-1<\alpha\leq \beta\leq 1$, $\alpha\beta\neq 0$, $\beta\neq -\alpha,-\alpha-1$ & $(1,0,0,0)$ & $\bR^3$ &  $\bR^3$ & $4$ \\
    & $\alpha=-1$, $\beta>0$, $\beta \neq 1$ & $(1,1,1,0)$ & $\bR^3$ &  $\bR^3$ & $3$ \\
    & $\alpha=-1$, $\beta=1$ & $(1,2,2,0)$ & $\bR^3$ &  $\bR^3$ & $2$ \\
   $A_{4,6}^{\alpha,\beta}$ & $( \alpha e^{14},\beta e^{24}+e^{34},\beta e^{34}-e^{24},0)$ & &  & &  \\
     & $\alpha>0$, $\beta\neq 0,-\alpha/2$ & $(1,0,0,0)$ & $\bR^3$ &  $\bR^3$ & $4$ \\
    & $\beta=0$, $\alpha>0$ & $(1,1,1,0)$ & $\bR^3$ &  $\bR^3$ & $3$ \\
    $A_{4,7}$ & $( 2 e^{14}+e^{23},e^{24}+e^{34},e^{34},0)$ & $(1,0,0,0)$ & $\h_3$ & $\h_3$ & $3$ \\
    $A_{4,9}^{\alpha}$ & $((\alpha+1) e^{14}+e^{23},e^{24},\alpha e^{34},0)$ & & &  &  \\
     & $-1<\alpha\leq 1$, $\alpha\neq -\frac{1}{2},0$ & $(1,0,0,0)$ & $\h_3$ & $\h_3$ & $3$ \\
     & $\alpha=-\frac{1}{2}$ & $(1,1,1,0)$ & $\h_3$ & $\h_3$ & $2$ \\
     & $\alpha=0$ & $(2,1,0,0)$ & $\h_3$ & $\bR^2$ & $3$ \\
    $A_{4,11}^{\alpha}$ & $\left(2 \alpha e^{14}+e^{23},\alpha e^{24}+e^{34},\right.$ $\left.\alpha e^{34}-e^{24},0\right)$, $\alpha>0$& $(1,0,0,0)$ & $\h_3$ & $\h_3$ & $3$ \\
     $A_{4,12}$ & $\left( e^{14}+e^{23},e^{24}-e^{13},0,0\right)$ & $(2,1,0,0)$ & $e(2)$ & $\bR^2$ & $2$ \\
      $\mathfrak{r}_2\op \mathfrak{r}_2$ & $\left( e^{14}+e^{23},e^{24}+e^{13},0,0\right)$\footnotemark & $(2,1,0,0)$ & $e(1,1)$ & $\bR^2$ & $2$ \\
      \footnotetext{A relation of the standard basis $f^1,\,f^2,\,f^3,\,f^4$ of $\mathfrak{r}_2^*\op \mathfrak{r}_2^*$ with $(df^1,df^2,df^3,df^4)=(f^{12},0,f^{34},0)$ to our basis $e^1,\,e^2,\,e^3,\,e^4$ is given by $e^1=f^1+f^3,\, e^2=f^1-f^3,\, e^3=\frac{1}{2}\left(f^2-f^4\right), \, e^4=\frac{1}{2}\left(f^2+f^4\right)$.}
\end{longtable}
\begin{longtable}{lc}
  \caption{Dual adapted bases for cocalibrated $\G_2$-structures for some exceptional cases} \\
  \hline Lie algebra & dual adapted basis \footnotemark \\ \hline
  \endfirsthead
  \hline Lie algebra & dual adapted basis \\
  \hline
  \endhead
  \label{table:examples}
  \footnotetext{In each case, $(\e^1,\dots,\e^7)$ denotes a basis such
    that $\e^1,\dots,\e^4$ satisfy the Lie algebra structure given in
    Table \ref{table:4d} and $\e^5,\ldots,e^7$ satisfy the Lie algebra structure given in Table \ref{table:3d}}
   $A_{4,8}\op e(1,1)$ & $\left(\e^5,\e^6,\e^7,\e^4,\e^2,\e^3,\e^1\right)$ \\
$A_{4,12}\op \mathfrak{r}_{3,1}$ & $\left(-\frac{1}{3} \sqrt{5}\, \e^1, \sqrt{5}\, \e^4, \e^2-\frac{4}{5} \sqrt{5}\, \e^5,\e^3+\frac{2}{5} \sqrt{5}\, \e^6,\e^5,\e^6,\e^7\right)$ \\
$\mathfrak{r}_2 \op \mathfrak{r}_2 \op \mathfrak{r}_{3,1}$ & $\left(\e^2+\frac{13}{9} \e^5, \e^5, \e^3+3\e^6, \e^6, \frac{1}{2\sqrt{10}} \e^7, \frac{1}{3\sqrt{10}} \e^4,\frac{9}{\sqrt{10}} \e^1 \right)$ 
\end{longtable}
\end{small}
\newpage
\pagestyle{headings}

\end{document}